\newtheorem{theorem}{Theorem}
\newtheorem*{claim}{Claim}
\newtheorem{corollary}[theorem]{Corollary}
\newtheorem{definition}[theorem]{Definition}
\newtheorem{lemma}[theorem]{Lemma}
\newtheorem{proposition}[theorem]{Proposition}
\newtheorem{remark}[theorem]{Remark}
\numberwithin{equation}{section}
\newcommand{\Z}{\mathbb Z}
\newcommand{\C}{\mathbb C}
\newcommand{\N}{\mathbb N}
\newcommand{\R}{\mathbb R}
\newcommand{\CP}{\mathbb P}
\newcommand{\Fix}{\operatorname{Fix}}
\newcommand{\Aut}{\operatorname{Aut}}
\newcommand{\sign}{\operatorname{sign}}
\newcommand{\Sym}{\operatorname{Sym}}
\newcommand{\id}{\operatorname{id}}
\newcommand{\GL}{\operatorname{GL}}
\begin{document}

\title{On the construction problem for Hodge numbers}
\author{Stefan Schreieder} 
\address{Max-Planck-Institut für Mathematik, Vivatsgasse 7, 53111 Bonn, Germany}							
\address{Mathematisches Institut, Universität Bonn, Endenicher Allee 60, 53115 Bonn, Germany} 
\email{schreied@math.uni-bonn.de}%

\date{April 25, 2014}
\subjclass[2010]{primary 32Q15, 14C30, 14F45; secondary 14J99, 51M15}

\keywords{Construction problem, Kähler geometry, Hodge numbers}

\begin{abstract}
For any symmetric collection $(h^{p,q})_{p+q=k}$ of natural numbers, we construct a smooth complex projective variety $X$ whose weight $k$ Hodge structure has Hodge numbers $h^{p,q}(X)=h^{p,q}$; if $k=2m$ is even, then we have to impose that $h^{m,m}$ is bigger than some quadratic bound in $m$.
Combining these results for different weights, we solve the construction problem for the truncated Hodge diamond under two additional assumptions.
Our results lead to a complete classification of all nontrivial dominations among Hodge numbers of Kähler manifolds.
\end{abstract}

\maketitle


\section{Introduction}
For a Kähler manifold $X$, Hodge theory yields an isomorphism
\begin{align} \label{eq:Hodgestructure}
H^k(X,\C)\cong \bigoplus_{p+q=k}H^q(X,\Omega_X^p) \ .
\end{align}
As a refinement of the Betti numbers of $X$, one therefore defines the $(p,q)$-th Hodge number $h^{p,q}(X)$ of $X$ to be the dimension of $H^q(X,\Omega_X^p)$.
This way one can associate to each $n$-dimensional Kähler manifold $X$ its collection of Hodge numbers $h^{p,q}(X)$ with $0\leq p,q \leq n$.
Complex conjugation and Serre duality show that such a collection of Hodge numbers $(h^{p,q})_{p,q}$ in dimension $n$ needs to satisfy the Hodge symmetries
\begin{align} \label{eq:Hsym}
h^{p,q}=h^{q,p}=h^{n-p,n-q} \ .
\end{align}
Moreover, as a consequence of the Hard Lefschetz Theorem, the Lefschetz conditions
\begin{align}\label{eq:Lineq}
h^{p,q}\geq h^{p-1,q-1} \ \ \ \text{for all}\ \ \ p+q\leq n 
\end{align}
hold.
Given these classical results, the construction problem for Hodge numbers asks which collections of natural numbers $(h^{p,q})_{p,q}$, satisfying (\ref{eq:Hsym}) and (\ref{eq:Lineq}), actually arise as Hodge numbers of some $n$-dimensional Kähler manifold.
In his survey article on the construction problem in Kähler geometry \cite{Si}, C.\ Simpson explains our lack of knowledge on this problem.
Indeed, even weak versions where instead of all Hodge numbers one only considers small subcollections of them are wide open; for some partial results in dimensions two and three we refer to \cite{surfaces,chang,hunt}. 

This paper provides three main results on the above construction problem in the category of smooth complex projective varieties, which is stronger than allowing arbitrary Kähler manifolds.
We present them in the following three subsections respectively.
 
\subsection{The construction problem for weight $k$ Hodge structures}
It follows from Griffiths transversality that a general integral weight $k$ ($k\geq 2$) Hodge structure (not of K3 type) cannot be realized by a smooth complex projective variety, see \cite[Remark 10.20]{voisin1}.
This might lead to the expectation that general weight $k$ Hodge numbers can also not be realized by smooth complex projective varieties.
Our first result shows that this expectation is wrong. 
This answers a question in \cite{Si}. 

\begin{theorem} \label{thm:main2}
Fix $k\geq 1$ and let $(h^{p,q})_{p+q=k}$ be a symmetric collection of natural numbers.
If $k=2m$ is even, we assume 
	\[
	h^{m,m}\geq m\cdot \left\lfloor (m+3)/2\right\rfloor+\left\lfloor m/2\right\rfloor^2 \ .
	\]
Then in each dimension $\geq k+1$ there exists a smooth complex projective variety whose Hodge structure of weight $k$ realizes the given Hodge numbers.
\end{theorem}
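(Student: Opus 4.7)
The plan is to assemble $X$ from simple building blocks by combining products and blow-ups, exploiting the Hodge-theoretic blow-up formula
\[
h^{P,Q}(\mathrm{Bl}_Y V) = h^{P,Q}(V) + \sum_{i=1}^{c-1} h^{P-i,\,Q-i}(Y)
\]
for a smooth codimension-$c$ center $Y \subset V$: each blow-up center injects a diagonally shifted copy of its Hodge diamond into the ambient one. This lets us build up off-diagonal weight-$k$ Hodge numbers one position at a time, with an unavoidable side effect of creating extra diagonal classes at $(m,m)$ when $k=2m$.

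I would start from a base smooth projective variety $X_0$ of dimension $n\geq k+1$ which realizes the prescribed extremal Hodge number $h^{k,0}$ while keeping the remaining off-diagonal weight-$k$ Hodge numbers below their respective targets; a natural candidate is a product such as $C_{h^{k,0}}\times E^{k-1}\times \mathbb{P}^{n-k}$, with $C_{h^{k,0}}$ of genus $h^{k,0}$ and $E$ elliptic. For each remaining off-diagonal position $(p,q)$ with $p>q$ and $p+q=k$, I would then blow up a smooth center $Y_{p,q}$ of dimension $p-q$ chosen so that $h^{p-q,0}(Y_{p,q})$ equals the discrepancy between the prescribed $h^{p,q}$ and what the base already provides, while all intermediate off-diagonal Hodge numbers of $Y_{p,q}$ vanish (e.g.\ curves of appropriate genus for $p-q=1$, and simply connected complete intersections or cyclic branched covers of $\mathbb{P}^{p-q}$ for $p-q\geq 2$). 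The $i=q$ term of the blow-up formula then deposits exactly the required amount at position $(p,q)$, while the absence of the other intermediate off-diagonal Hodge numbers prevents contamination of other weight-$k$ positions.

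The unavoidable cost appears at the middle when $k=2m$: each $Y_{p,q}$ of dimension $d=p-q$ contributes $h^{r,r}(Y_{p,q})$ for $r=0,\dots,d$ to $h^{m,m}$ of the final variety. A careful accounting of these diagonal contributions across all centers together with the middle residue of the base $X_0$ yields the quadratic bound $m\lfloor (m+3)/2\rfloor + \lfloor m/2\rfloor^2$ in the statement. Any excess of the prescribed $h^{m,m}$ over this unavoidable value is then absorbed by further blow-ups at smooth points, each of which adds exactly $1$ to every $h^{s,s}$ with $1\leq s\leq n-1$ (in particular to $h^{m,m}$) and leaves all off-diagonal Hodge numbers intact.

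The main obstacle is the design of the building blocks. Because we have only blow-ups, and not blow-downs, at our disposal, the base $X_0$ must be chosen so that no off-diagonal weight-$k$ Hodge number overshoots its prescribed target, and each center $Y_{p,q}$ must realize an arbitrary $h^{p-q,0}$ while having all its other off-diagonal Hodge numbers forced to vanish — typically via Lefschetz hyperplane vanishing or the explicit Hodge structure of cyclic covers. One must then verify that the accumulated diagonal contributions match the quoted quadratic bound exactly (not merely asymptotically) so that the lower bound on $h^{m,m}$ becomes both necessary and sufficient.
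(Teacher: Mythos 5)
Your outline correctly identifies the two mechanisms the paper also uses -- the blow-up formula depositing a $(i+1,i+1)$-shifted copy of a center's Hodge structure, and point blow-ups to absorb any excess of $h^{m,m}$ over the unavoidable diagonal contribution -- but it has a genuine gap at exactly the point you flag as ``the main obstacle'': the building blocks you need do not exist among the varieties you propose, and producing substitutes for them is the actual content of the paper's proof.

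Concretely, two things fail. First, the base $X_0=C_{h^{k,0}}\times E^{k-1}\times \CP^{n-k}$ overshoots: by the K\"unneth formula its weight-$k$ cohomology contains, for every $0<q<k$, large subspaces of type $(k-q,q)$ coming from mixed products of $(1,0)$- and $(0,1)$-classes of the curve factors, so $h^{k-q,q}(X_0)$ is already positive (and grows with $h^{k,0}$), which is fatal whenever the prescribed $h^{k-q,q}$ is small. Second, and more seriously, the centers $Y_{p,q}$ you require -- smooth projective of dimension $d=p-q$ with $h^{d,0}$ equal to an \emph{arbitrary} natural number and all other off-diagonal Hodge numbers zero, in particular $h^{d-1,1}=h^{d-2,2}=\cdots=0$ -- are not supplied by complete intersections or cyclic covers of $\CP^{d}$: for a degree-$\ell$ hypersurface in $\CP^{d+1}$ with $h^{d,0}>0$ one has $h^{d-1,1}\gg 0$, and already for $d=3$ a threefold with $h^{3,0}=g$ arbitrary and $h^{2,1}=0$ is a highly nontrivial object (even Theorem \ref{thm:Z_n} of the paper only realizes the special values $h^{d,0}=(3^c-1)/2$ in dimension exactly $d$). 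So the existence of your building blocks is essentially a special case of the statement being proved, not an input available in advance.

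The paper circumvents both problems with one device you do not use: finite group actions combined with the Godeaux--Serre construction. The center attached to the position $(k-i,i)$ is the full product $T_i=(C_{h^{k-i,i}})^{k-2i}$ -- which by itself has terrible intermediate Hodge numbers -- but it carries an action of the group $G_i=G(k-2i,0,h^{k-i,i})$ whose \emph{invariant} cohomology is concentrated in types $(k-2i,0)$, $(0,k-2i)$ and $(p,p)$ (Lemma \ref{lem:groupaction}). All the $T_i$ are blown up equivariantly inside a large projective space, the product group acts freely outside codimension $>n$ on the blow-up, and the final variety is a general invariant complete intersection in the quotient (Lemma \ref{lem:Sommese}, Proposition \ref{prop:constrmethod}); the Lefschetz hyperplane theorem then identifies its cohomology below the middle degree with the $G$-invariants, which is what kills every unwanted off-diagonal class of both the ambient space and the centers. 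Without this quotient step there is no mechanism in your argument to discard the contaminating classes, so the construction as proposed cannot be completed.
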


The examples which realize given weight $k$ Hodge numbers in the above theorem have dimension $\geq k+1$.
However, if we assume that the outer Hodge number $h^{k,0}$ vanishes and that the remaining Hodge numbers are even, then we can prove a version of Theorem \ref{thm:main2} also in dimension $k$, see Corollary \ref{cor:main2} in Section \ref{sec:main2}.

Since any smooth complex projective variety contains a hyperplane class, it is clear that some kind of bound on $h^{m,m}$ in Theorem \ref{thm:main2} is necessary.
For $m=1$, for instance, the bound provided by the above Theorem is $h^{1,1}\geq 2$.
In Section \ref{sec:deg2coho} we will show that in fact the optimal bound $h^{1,1}\geq 1$ can be reached.
That is, we will show (Theorem \ref{thm:main3}) that any natural numbers $h^{2,0}$ and $h^{1,1}$ with $h^{1,1}\geq 1$ can be realised as weight two Hodge numbers of some smooth complex projective variety. 
For $m\geq 2$, we do not know whether the bound on $h^{m,m}$ in Theorem \ref{thm:main2} is optimal or not.

\subsection{The construction problem for the truncated Hodge diamond}
Given Theorem \ref{thm:main2} one is tempted to ask for solutions to the construction problem for collections of Hodge numbers which do not necessarily correspond to a single cohomology group.
In order to explain our result on this problem, we introduce the following notion:
An $n$-dimensional formal Hodge diamond is a table
\begin{align} \label{eq:diamond}
\begin{tabular}{ l c c c c c c c r }
    & & &  & $h^{n,n}$ & &  & &\\
    & & &  $h^{n,n-1}$ &  & $h^{n-1,n}$ & & & \\
    &  & $h^{n,n-2}$ &  & $h^{n-1,n-1}$ &  & $h^{n-2,n}$ & & \\
    &  \reflectbox{$\ddots$} &  &  & $\vdots$ &  &  & $\ddots$ & \\
 $h^{n,0}$ & & & &  & &   & & $h^{0,n}$ \\
    & $\ddots$  &  &  & $\vdots$ &  &  &\reflectbox{$\ddots$}  & \\
    &  & $h^{2,0}$ &  & $h^{1,1}$ &  & $h^{0,2}$ & & \\
    & & &  $h^{1,0}$ &  & $h^{0,1}$ & & & \\
    & & &  & $h^{0,0}$ & &  & &\\
\end{tabular}
\end{align}
of natural numbers $h^{p,q}$, satisfying the Hodge symmetries (\ref{eq:Hsym}), the Lefschetz conditions (\ref{eq:Lineq}) and the connectivity condition $h^{0,0}=h^{n,n}=1$.
The $h^{p,q}$ are referred to as Hodge numbers and the sum over all $h^{p,q}$ with $p+q=k$ as $k$-th Betti number $b_k$ of this formal diamond; the vector $(b_0,\ldots,b_{2n})$ is called a vector of formal Betti numbers.
Finally, for $p+q\leq n$, the primitive $(p,q)$-th Hodge number of the above diamond is defined via
\[
l^{p,q}:=h^{p,q}-h^{p-1,q-1} \ .
\]

\begin{definition}
A truncated $n$-dimensional formal Hodge diamond is a formal Hodge diamond (\ref{eq:diamond}) as above where the horizontal middle axis, i.e.\ the row of Hodge numbers $h^{p,q}$ with $p+q=n$, is omitted.
\end{definition}

We note that for a Kähler manifold $X$ its truncated Hodge diamond together with all holomorphic Euler characteristics $\chi(X,\Omega_X^p)$, where $p=0,\ldots ,\left\lfloor n/2\right\rfloor$, is equivalent to giving the whole Hodge diamond.
It is shown in \cite{K&S} that a linear combination of Hodge numbers can be expressed in terms of Chern numbers if and only if it is a linear combination of these Euler characteristics. 
Therefore, the Hodge numbers of the truncated Hodge diamond form a complement to the space of Hodge numbers which are determined by Chern numbers, cf.\ \cite{K&S} where the Hodge numbers in dimension $n$ are regarded as linear forms on the weight $n$ part of a certain graded ring.

Our second main result solves the construction problem for the truncated Hodge diamond under two additional assumptions:

\begin{theorem} \label{thm:main1}
Suppose we are given a truncated $n$-dimensional formal Hodge diamond whose Hodge numbers $h^{p,q}$ satisfy the following two additional assumptions:
\begin{enumerate}
	\item For $p< n/2$, the primitive Hodge numbers $l^{p,p}$ satisfy \label{cond1:thm:main1}
	\[
	l^{p,p}\geq p\cdot (n^2-2n+5)/4 \ .
	\]
	\item The outer Hodge numbers $h^{k,0}$ vanish either for all $k=1,\ldots , n-3$, or for all $k\neq k_0$ for some $k_0\in\left\{1,\ldots ,n-1\right\}$.
\end{enumerate}
Then there exists an $n$-dimensional smooth complex projective variety whose truncated Hodge diamond coincides with the given one.
\end{theorem}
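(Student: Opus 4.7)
The plan is to construct the desired $n$-fold as an iterated blow-up of a carefully chosen ambient variety whose outer Hodge numbers $h^{k,0}$ are already correct, using Theorem \ref{thm:main2} and Corollary \ref{cor:main2} to supply the building blocks and Assumption (1) to absorb contributions to the diagonal.

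First I would produce, for each weight $w\in\{1,\ldots,n-1\}$ with a non-trivial contribution to the truncated diamond, a smooth projective building block $Y_w$ realizing the prescribed weight-$w$ row. Theorem \ref{thm:main2} gives such a $Y_w$ of dimension $w+1$, and Corollary \ref{cor:main2} improves this to dimension $w$ whenever $h^{w,0}=0$. Assumption (2) is exactly the hypothesis that controls which weights require the larger dimension: in the first alternative, only the two building blocks at $w=n-2$ and $w=n-1$ do, while in the second alternative this is the case only for the single weight $w=k_0$. In either case, every $Y_w$ can be placed, after K\"unneth-padding by a projective space, as a subvariety of an $n$-fold of controlled codimension.

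A key observation is that for the blow-up of a smooth codimension-$r$ center $Z\subset X$ the formula
\[
h^{p,q}(\tilde X)=h^{p,q}(X)+\sum_{i=1}^{r-1}h^{p-i,q-i}(Z)
\]
has every summand zero when $q=0$; thus \emph{blow-ups preserve all outer Hodge numbers} $h^{k,0}$. The ambient variety must therefore already carry the correct outer Hodge numbers, and Assumption (2) is precisely what makes this possible. In the second alternative I would take $X_0=Y_{k_0}\times\CP^{n-\dim Y_{k_0}}$, so that K\"unneth supplies the unique non-zero outer Hodge number; in the first alternative I would combine $Y_{n-1}$ and $Y_{n-2}$ via products and auxiliary blow-ups (padded by projective space) to install the two potentially non-zero outer Hodge numbers $h^{n-1,0}$ and $h^{n-2,0}$. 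Starting from such an $X_0$, I would iteratively blow up along smooth codimension-two centers of the form $Y_{w-2}\times\CP^{n-\dim Y_{w-2}-2}$ (for each $w\geq 3$) in order to install the interior Hodge numbers of weight $w$: the codimension-two choice leaves only the $i=1$ term in the blow-up formula, producing a single clean diagonal shift of the weight-$(w-2)$ Hodge numbers of the center into the interior of the weight-$w$ row of the ambient. A weight-by-weight induction, running from high weights downward and correcting at each stage, then installs all prescribed interior Hodge numbers; the cumulative effect on the diagonal is a sum of contributions from every blow-up and every K\"unneth-padding, and Assumption (1)'s quadratic-in-$n$ lower bound on $l^{p,p}$ is exactly what guarantees these fit strictly under the prescribed diagonal.

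The main obstacle is the combinatorial bookkeeping: every blow-up adds contributions all along the diagonal (and, whenever the codimension exceeds two, across several weight rows at once), and every K\"unneth-padding creates echoed copies of a given row on successive diagonals. The argument must orchestrate all these contributions so that they sum to the prescribed truncated diamond in every row simultaneously, and must verify that the bound in Assumption (1) is just sufficient to absorb every diagonal contribution produced. A secondary technical difficulty lies in realizing two independent outer Hodge numbers in the first alternative of Assumption (2), since no sequence of blow-ups can modify them and one must find an ad hoc product/blow-up construction of the ambient $X_0$ from $Y_{n-1}$ and $Y_{n-2}$.
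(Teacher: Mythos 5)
Your plan differs from the paper's in a way that creates genuine gaps. The paper does not build the variety by iterated blow-ups of smooth building blocks coming from Theorem \ref{thm:main2}; it runs a single simultaneous Godeaux--Serre construction (Proposition \ref{prop:constrmethod}): all the centers $T_{p,q}=(C_{l^{p,q}})^{p+q-2}$ are blown up \emph{equivariantly} inside $Y=T_{i_0}\times\mathbb P(V)$, and one then takes a general $n$-dimensional $G$-invariant complete intersection in the quotient. Two mechanisms there are essential and are missing from your proposal. First, only the $G_{p,q}$-invariant cohomology of each center survives, and by Lemmas \ref{lem:groupaction} and \ref{lem:groupactiona=b} this is concentrated in a single off-diagonal type $(p-1,q-1)$ plus an explicitly counted handful of $(j,j)$-classes; it is this count that produces the constant $p(n^2-2n+5)/4$ in Assumption (1). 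Your centers $Y_{w-2}\times\CP^N$ have no such concentration: Theorem \ref{thm:main2} only controls the single weight-$(w-2)$ row of $Y_{w-2}$ (its other rows, including its middle-degree outer Hodge number, are whatever the construction happens to give), and the blow-up formula imports the \emph{entire} shifted diamond of the center. Second, the Lefschetz hyperplane theorem applied to the complete intersection truncates everything at the middle degree. Without it, the K\"unneth echoes you acknowledge are fatal, not just bookkeeping: an off-diagonal class of $Y_{w-2}$ in weight $w-2$ sits in the center at types $(p+j,q+j)$ for all $j\le N$, so a single codimension-two blow-up meant to install the weight-$w$ row also deposits the same off-diagonal numbers into weights $w+2,w+4,\dots$, while the uncontrolled lower rows of $Y_{w-2}$ pollute weights below $w$. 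Your downward induction (and equally an upward one) therefore cannot ``correct at each stage'': each correction re-pollutes rows already fixed, and the pollution is off-diagonal, so Assumption (1) cannot absorb it.

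The second gap is the one you flag yourself but cannot be waved away as secondary: in the first alternative of Assumption (2) you must build an ambient $n$-fold with two independently prescribed outer Hodge numbers $h^{n-1,0}$ and $h^{n-2,0}$ and all other $h^{k,0}=0$. Since blow-ups never change outer Hodge numbers, this must be done at the level of $X_0$, but Theorem \ref{thm:main2} forces the block realizing a nonzero $h^{n-1,0}$ to have dimension at least $n$ (Corollary \ref{cor:main2} is unavailable precisely because the outer number is nonzero), so there is no room left for a product with a second block carrying $h^{n-2,0}$. This is exactly the point where the paper needs $T_{i_0}=(C_{l^{n-1,0}})^{n-1}\times(C_{l^{n-2,0}})^{n-2}$ with the product group action inside Proposition \ref{prop:constrmethod}; it is not an ad hoc afterthought but the reason case (2) of the theorem is stated the way it is. As written, your argument establishes neither alternative of Assumption (2) and does not verify the quantitative bound in Assumption (1).
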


Theorem \ref{thm:main1} has several important consequences.
For instance, for the union of $h^{n-2,0}$ and $h^{n-1,0}$ with the collection of all Hodge numbers which neither lie on the boundary, nor on the horizontal or vertical middle axis of (\ref{eq:diamond}), the construction problem is solvable without any additional assumptions.
That is, the corresponding subcollection of any $n$-dimensional formal Hodge diamond can be realized by a smooth complex projective variety.
The number of Hodge numbers we omit in this statement from the whole diamond (\ref{eq:diamond}) grows linearly in $n$, whereas the number of all entries of (\ref{eq:diamond}) grows quadratically in $n$.
In this sense, Theorem \ref{thm:main1} yields very good results on the construction problem in high dimensions.

Theorem \ref{thm:main1} deals with Hodge structures of different weights simultaneously.
This enables us to extract from it results on the construction problem for Betti numbers.
Indeed, the following corollary rephrases Theorem \ref{thm:main1} in terms of Betti numbers.

\begin{corollary} \label{cor:betti} 
Given a vector $(b_0,\ldots, b_{2n})$ of formal Betti numbers with
\[
b_{2k}-b_{2k-2}\geq k\cdot (n^2-2n+5)/8 \ \ \text{for all $k< n/2$.}
\]
Then there exists an $n$-dimensional smooth complex projective variety $X$ with $b_k(X)=b_k$ for all $k\neq n$.
\end{corollary}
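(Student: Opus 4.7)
The plan is to reduce the statement to Theorem \ref{thm:main1} by producing a truncated $n$-dimensional formal Hodge diamond whose row sums in degrees $k\neq n$ recover the prescribed Betti numbers $b_k$ and which satisfies the two hypotheses of that theorem; applying the theorem then yields the variety.

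For the outer Hodge numbers I would use the freedom in condition (2): if $b_1=0$ take option A and set $h^{k,0}=0$ for all $k\in\{1,\ldots,n-1\}$; if $b_1>0$ take option B with $k_0=1$, placing $h^{1,0}=h^{0,1}=b_1/2$ (an integer, since odd-degree entries of any formal Betti vector are automatically even) and $h^{k,0}=0$ for $k\neq 1$. For each odd $k$ with $3\leq k\leq n-1$ I would concentrate the whole row mass on the two innermost symmetric entries $h^{(k-1)/2,(k+1)/2}=h^{(k+1)/2,(k-1)/2}=b_k/2$ and set every other entry of row $k$ to zero. For each even $k=2p$ with $p\geq 1$ I would proceed by induction on $p$: having defined $h^{p-1,p-1}$, I would set $h^{p,p}$ just large enough to meet the primitive Hodge bound $h^{p,p}-h^{p-1,p-1}\geq p(n^2-2n+5)/4$ and distribute the residual $b_{2p}-h^{p,p}$ symmetrically on the off-diagonal entries $h^{p-j,p+j}=h^{p+j,p-j}$ for $j\geq 1$, chosen so that along each column $|p-q|=2j$ the resulting values form a sequence that is nondecreasing in $p$. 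Hodge symmetries then determine the upper half of the diamond, while the middle row $p+q=n$ is absent from the truncated diamond and need not be specified.

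It then remains to verify the hypotheses of Theorem \ref{thm:main1}. Condition (2) holds by the choice of outer pattern, and condition (1) holds by construction of the central-diagonal sequence. The Lefschetz inequalities in the truncated diamond reduce, in view of the concentrated placement, to three monotonicity statements: along the central diagonal (built into the primitive Hodge bound); along the $|p-q|=1$ columns with entries $b_{2m+1}/2$, where $b_{2m+1}\leq b_{2m+3}$ holds for every formal Betti vector as a summed form of the Lefschetz inequality; and along the even off-diagonal columns, which we impose by hand in the construction.

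The main obstacle lies in the even-row step: one must simultaneously meet the primitive Hodge bound on the diagonal and preserve Lefschetz monotonicity along several off-diagonal columns, while the hypothesis $b_{2k}-b_{2k-2}\geq k(n^2-2n+5)/8$ controls only the overall Betti gap. The key is to exploit the freedom to distribute the off-diagonal mass across several even columns $|p-q|=2,4,\ldots$, using the slack inherent to any formal Betti vector; carrying out this combinatorial construction carefully is the heart of the argument.
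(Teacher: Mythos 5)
Your overall strategy is the one the paper intends: Corollary \ref{cor:betti} is presented without a separate argument, as a direct rephrasing of Theorem \ref{thm:main1}, and your handling of the outer Hodge numbers via condition (2) and of the odd rows (all mass on $h^{(k-1)/2,(k+1)/2}=h^{(k+1)/2,(k-1)/2}=b_k/2$, with the Lefschetz condition along the $|p-q|=1$ column reducing to $b_k\geq b_{k-2}$, which every formal Betti vector satisfies) is correct.

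The gap is exactly the step you defer as ``the heart of the argument'', and as described it cannot be completed. Summing condition (1) of Theorem \ref{thm:main1} over $j=1,\dots,p$ forces
\[
h^{p,p}\;=\;1+\sum_{j=1}^{p}l^{j,j}\;\geq\;1+\frac{n^2-2n+5}{4}\cdot\frac{p(p+1)}{2}\,,
\]
while $h^{p,p}\leq b_{2p}$ and the hypothesis of the corollary only guarantees $b_{2p}\geq 1+\frac{n^2-2n+5}{8}\cdot\frac{p(p+1)}{2}$. So for a Betti vector meeting the hypothesis with near-equality the ``residual'' $b_{2p}-h^{p,p}$ that you propose to push onto the off-diagonal entries is negative. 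Concretely, for $n=4$ and $b_2=3$ the hypothesis requires only $b_2-b_0\geq 13/8$, yet condition (1) requires $l^{1,1}\geq 13/4$, hence $h^{1,1}\geq 5>b_2$; no truncated formal Hodge diamond with these Betti numbers satisfies the hypotheses of Theorem \ref{thm:main1}. Redistributing mass off the diagonal cannot rescue this, since it only decreases $h^{p,p}$ and hence $l^{p,p}$ further; there is no hidden slack to exploit. The resolution is that the constant in the corollary must match the one in Theorem \ref{thm:main1} (the $/8$ should be a $/4$). With that reading your construction degenerates to the trivial one: set $h^{k,k}=b_{2k}$ for every even row $2k<n$, so that $l^{k,k}=b_{2k}-b_{2k-2}$ and condition (1) is literally the hypothesis --- there is then no residual to distribute and no combinatorial difficulty left. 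As stated with the $/8$, the statement does not follow from Theorem \ref{thm:main1} for any choice of Hodge decomposition of the given Betti numbers.
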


This corollary says for instance that in even dimensions, the construction problem for the odd Betti numbers is solvable without any additional assumptions.

\subsection{Universal inequalities and Koll\'ar--Simpson's domination relation}
Following Koll\'ar--Simpson \cite[p.\ 9]{Si}, we say that a Hodge number $h^{r,s}$ dominates $h^{p,q}$ in dimension $n$, if there exist positive constants $c_1,c_2\in \R_{>0}$ such that for all $n$-dimensional smooth complex projective varieties $X$, the following holds:
\begin{equation} \label{eq:domination}
c_1\cdot h^{r,s}(X)+c_2\geq h^{p,q}(X) \ .
\end{equation}
Moreover, such a domination is called nontrivial if $(0,0)\neq(p,q)\neq (n,n)$, and if (\ref{eq:domination}) does not follow from the Hodge symmetries (\ref{eq:Hsym}) and the Lefschetz conditions (\ref{eq:Lineq}).

In \cite{Si} it is speculated that the middle Hodge numbers should probably dominate the outer ones.
In our third main theorem of this paper, we classify all nontrivial dominations among Hodge numbers in any given dimension.
As a result we see that the above speculation is accurate precisely in dimension two.

\begin{theorem}\label{thm:domination}
The Hodge number $h^{1,1}$ dominates $h^{2,0}$ nontrivially in dimension two and this is the only nontrivial domination in dimension two.
Moreover, there are no nontrivial dominations among Hodge numbers in any dimension different from two.
\end{theorem}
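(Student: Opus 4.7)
Plan. The theorem splits into three assertions: (i) in dimension two, $h^{1,1}$ dominates $h^{2,0}$ nontrivially; (ii) in dimension two no other nontrivial domination exists; and (iii) in every dimension $n\neq 2$ there is no nontrivial domination at all. Dimension one is immediate, since only $h^{1,0}$ is unconstrained. I will handle (i) using classical surface theory, and both (ii) and (iii) by exhibiting, for each candidate nontrivial domination $h^{r,s}\to h^{p,q}$, a sequence of smooth projective varieties in which $h^{p,q}$ grows without bound while $h^{r,s}$ stays bounded; in dimension at least three these sequences are produced by Theorem~\ref{thm:main1}, supplemented by Corollary~\ref{cor:main2} for middle-weight Hodge numbers.

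For (i), my plan is to combine Noether's formula $12\chi(\mathcal O_X)=K_X^2+c_2(X)$ with the Enriques--Kodaira classification. A direct rewrite gives
\[
h^{1,1}(X)=10+10\,h^{2,0}(X)-8\,h^{1,0}(X)-K_X^2
\]
for every smooth projective surface $X$. For a minimal surface of general type, Bogomolov--Miyaoka--Yau gives $K_X^2\leq 9\chi(\mathcal O_X)$, and therefore $h^{1,1}\geq 1+h^{2,0}+h^{1,0}$; blowing up preserves $h^{2,0}$ and only increases $h^{1,1}$. For minimal surfaces of Kodaira dimension zero or one, one has $K_X^2=0$ and $\chi(\mathcal O_X)\geq 0$, hence $h^{1,1}\geq 2+2h^{2,0}$. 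In Kodaira dimension $-\infty$ one has $h^{2,0}=0$. Combining these cases yields $h^{1,1}\geq h^{2,0}$ universally; this domination is nontrivial because in dimension two the Hodge symmetries and the Lefschetz conditions only force $h^{1,1}\geq 1$.

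For (ii), I would rule out every remaining candidate pair in dimension two using three classical families: iterated blow-ups of $\mathbb P^2$ provide surfaces with arbitrarily large $h^{1,1}$ and $h^{1,0}=h^{2,0}=0$; products $C\times\mathbb P^1$ with $g(C)\to\infty$ provide surfaces with arbitrarily large $h^{1,0}$, $h^{1,1}=2$, and $h^{2,0}=0$; and smooth hypersurfaces of growing degree in $\mathbb P^3$ provide surfaces with arbitrarily large $h^{2,0}$ and $h^{1,0}=0$. For (iii), in each dimension $n\geq 3$ I would enumerate the candidate dominations among Hodge numbers not identified by Hodge symmetry and, for each such pair $(r,s)\to(p,q)$, apply Theorem~\ref{thm:main1} (combined with Corollary~\ref{cor:main2} for indices with $p+q=n$) to realise a family of smooth projective $n$-folds in which $h^{p,q}$ is arbitrarily large while $h^{r,s}$ is held at its minimum Lefschetz-admissible value; condition~(1) of Theorem~\ref{thm:main1} can be met by inflating the diagonal primitive numbers $l^{p,p}$ independently of $(r,s)$, and condition~(2) is satisfied via its two vanishing options for the outer Hodge numbers. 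The main obstacle is this bookkeeping in (iii): one must check case by case that each candidate admits a truncated formal diamond fulfilling both hypotheses of Theorem~\ref{thm:main1}, and verify that the inflation needed for condition~(1) never itself creates an accidental Lefschetz-forced lower bound on the target~$h^{r,s}$.
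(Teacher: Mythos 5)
Your dimension-two arguments are sound: part (i) is essentially the paper's Proposition \ref{prop:Hodgeineq} (the paper phrases Bogomolov--Miyaoka--Yau as $c_1^2\leq 3c_2$ and arrives at the same inequality $h^{1,1}\geq 1+h^{1,0}+h^{2,0}$ for minimal surfaces of non-negative Kodaira dimension), and your three surface families do eliminate every other candidate domination in dimension two. The problem lies in part (iii).

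Theorem \ref{thm:main1} controls only the \emph{truncated} Hodge diamond, and Corollary \ref{cor:main2} controls only the weight-$n$ Hodge structure, and only under the hypotheses that $h^{n,0}=0$ and all the prescribed Hodge numbers are even; neither statement controls the middle row and the truncated diamond of one and the same variety. Consequently your toolkit cannot produce the required families whenever the dominating number $h^{r,s}$ lies in the middle row, $r+s=n$. Two such configurations are unavoidable and need genuinely different constructions. First, for $r+s=n$ and $p+q<n$ one needs $n$-folds with $h^{p,q}$ unbounded but $h^{r,s}$ bounded; the paper gets these either from Theorem \ref{thm:Z_n} (via Corollary \ref{cor:thm:Z_n}, after using blow-ups of $\CP^n$ to reduce to $p\neq q$ when $r\neq s$) or, when $r=s=n/2$, by taking high-degree hyperplane sections $Y$ of the $n$-folds of Theorem \ref{thm:main1} and passing to $Y\times\CP^1$, exploiting $h^{r,r}(Y)=h^{r-1,r-1}(Y)$ on the $(n-1)$-fold $Y$. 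Second, and most critically, the pair $(r,s)=(m,m)$, $(p,q)=(n,0)$ with $n=2m\geq 4$ is exactly the configuration that \emph{does} yield a domination when $n=2$, so it cannot be dismissed by bookkeeping, and Corollary \ref{cor:main2} is useless for it since that corollary assumes $h^{n,0}=0$. The paper handles it by taking the $(2m-1)$-fold $Y$ from Theorem \ref{thm:Z_n} with $h^{2m-1,0}(Y)=1$ and all other Hodge numbers off the diagonal equal to zero, and forming $Y\times C_g$: then $h^{2m,0}=g$ is unbounded while $h^{m,m}=2\,h^{m-1,m-1}(Y)$ is constant. Without Theorem \ref{thm:Z_n}, or some substitute that controls the off-diagonal middle-row Hodge numbers together with $h^{n,0}$, your plan for dimensions $n\geq 3$ cannot be completed.
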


Firstly, using the classification of surfaces and the Bogomolov--Miyaoka--Yau inequality, we will prove in  Section \ref{sec:domination} (Proposition \ref{prop:Hodgeineq}) that 
\[
h^{1,1}(X)>h^{2,0}(X) 
\]
holds for all Kähler surfaces $X$.
That is, the middle degree Hodge number $h^{1,1}$ indeed dominates $h^{2,0}$ nontrivially in dimension two.

Secondly, in addition to Theorem \ref{thm:main1}, the proof of Theorem \ref{thm:domination} will rely on the following result, see Theorem \ref{thm:Z_n} in Section \ref{sec:Z_n}:
For all $a>b$ with $a+b\leq n$, there are $n$-dimensional smooth complex projective varieties whose primitive Hodge numbers $l^{p,q}$ satisfy $l^{a,b}>>0$ and $l^{p,q}=0$ for all other $p>q$.

Theorem \ref{thm:domination} deals with universal inequalities of the form (\ref{eq:domination}). 
In Section \ref{sec:Betti} we deduce from the main results of this paper some progress on the analogous problem for inequalities of arbitrary shape (Corollaries \ref{cor:ineqtruncatedHodgediamond}, \ref{cor:ineq_largedim} and \ref{cor:thm:Z_n}).  
For instance, we will see that any universal inequality among Hodge numbers of smooth complex projective varieties which holds in all sufficiently large dimensions at the same time is a consequence of the Lefschetz conditions. 

The problem of determining all universal inequalities among Hodge numbers of smooth complex projective varieties in a fixed dimension remains open. 
It is however surprisingly easy to solve the analogous problem for inequalities among Betti numbers. 
Indeed, using products of hypersurfaces of high degree, we will prove (Porposition \ref{prop:ineqBettinrs}) that in fact any universal inequality among the Betti numbers of $n$-dimensional smooth complex projective varieties is a consequence of the Lefschetz conditions.

\subsection{Some negative results} \label{intro:negresults}
Theorem \ref{thm:domination} shows that at least in dimension two, the constraints which classical Hodge theory puts on the Hodge numbers of Kähler manifolds are not complete.
Indeed, given weight two Hodge numbers can in general not be realized by a surface --  by Theorem \ref{thm:main2} (resp.\ Theorem \ref{thm:main3}) they can however be realized by higher dimensional varieties. 
In Appendix \ref{sec:3-folds} and \ref{sec:4-folds} of this paper we collect some partial results which demonstrate similar issues in dimensions three and four respectively. 
This is one of the reasons which makes the construction problem for Hodge numbers so delicate. 

In Appendix \ref{sec:3-folds} we prove (Proposition \ref{prop:3-folds}) that the Hodge numbers $h^{p,q}$ of any smooth complex projective three-fold with $h^{1,1}=1$ and $h^{2,0}>0$ satisfy $h^{1,0}=0$, $h^{2,0}<h^{3,0}$, and $h^{2,1}<12^6\cdot h^{3,0}$.
Moreover, for $h^{3,0}-h^{2,0}$ from above bounded, only finitely many deformation types of such examples exist.
In Appendix \ref{sec:4-folds} we prove similar results (Proposition \ref{prop:4-folds}) for projective four-folds with $h^{1,1}=1$.
(The existence of three- and four-folds with $h^{1,1}=1$ and $h^{2,0}>0$ is established by Theorem \ref{thm:main3} in Section \ref{sec:deg2coho}.)

Concerning the Betti numbers, we prove in Appendix \ref{sec:4-folds} (Corollary \ref{cor:4-folds}): 
Let $X$ be a Kähler four-fold with $b_2(X)=1$, then $b_3(X)$ can be bounded in terms of $b_4(X)$.
Since this phenomenon can neither be explained with the Hodge symmetries, the Lefschetz conditions nor the Hodge--Riemann bilinear relations, we conclude that even for the Betti numbers of Kähler manifolds, the known constraints are not complete.

\subsection{Organization of the paper}
In Section \ref{sec:Sommese} we outline our construction methods. 
In Section \ref{sec:hellcurves&groups} we consider the hyperelliptic curve $C_g$ given by $y^2=x^{2g+1}+1$ and construct useful subgroups of $\Aut(C_g^k)$.
In Section \ref{sec:constrmethod} we develop the construction method needed for the proofs of Theorems \ref{thm:main2} and \ref{thm:main1} in Sections \ref{sec:main2} and \ref{sec:main1} respectively.
In Section \ref{sec:deg2coho} we prove Theorem \ref{thm:main3}, i.e.\ we show that for weight two Hodge structures the bound on $h^{1,1}$ in Theorem \ref{thm:main2} can be chosen to be optimal. 
We produce in Section \ref{sec:Z_n} examples whose primitive Hodge numbers $l^{p,q}$ with $p>q$ are concentrated in a single $(p,q)$-type, and show in Section \ref{sec:domination} how our results lead to a proof of Theorem \ref{thm:domination}.
In Section \ref{sec:Betti} we apply our results to the problem of finding universal inequalities among Hodge and Betti numbers of smooth complex projective varieties. 
Finally, we discuss in Appendices \ref{sec:3-folds} and \ref{sec:4-folds} the negative results, mentioned in Section \ref{intro:negresults}.

\subsection{Notation and conventions} \label{subsec:conv}
The natural numbers $\N:=\Z_{\geq 0}$ include zero.
All Kähler manifolds are compact and connected, if not mentioned otherwise.
A variety is a separated integral scheme of finite type over $\C$.
Using the GAGA principle \cite{GAGA}, we usually identify a smooth projective variety with its corresponding analytic space, which is a Kähler manifold.
If not mentioned otherwise, cohomology means singular (or de Rham) cohomology with coefficients in $\C$; the cup product on cohomology will be denoted by $\wedge$.

With a group action $G\times Y\to Y$ on a variety $Y$, we always mean a group action by automorphisms from the left.
For any subgroup $\Gamma\subseteq G$, the fixed point set of the induced $\Gamma$-action on $Y$ will be denoted by 
\begin{align} \label{def:Fix_{Y}(Gamma)}
\Fix_{Y}(\Gamma):=\left\{y\in Y\ |\ g(y)=y\ \text{for all }g \in \Gamma\right\} \ .
\end{align}
If $\Gamma=\left\langle \phi\right\rangle$ is cyclic, then we will frequently write $\Fix_{Y}(\Gamma)=\Fix_{Y}(\phi)$ for this fixed point set.

\section{Outline of our construction methods} \label{sec:Sommese}
The starting point of our constructions is the observation that there are finite group actions $G\times T \to T$, where $T$ is a product of hyperelliptic curves, such that the $G$-invariant cohomology of $T$ is essentially concentrated in a single $(p,q)$-type, see Section \ref{subsec:groupaction}.
In local holomorphic charts, $G$ acts by linear automorphisms.
Thus, by the Chevalley--Shephard--Todd Theorem, $T/G$ is smooth if and only if $G$ is generated by quasi-reflections, that is, by elements whose fixed point set is a divisor on $T$.
Unfortunately, it turns out that in our approach this strong condition can rarely be met.
We therefore face the problem of a possibly highly singular quotient $T/G$.

One way to deal with this problem is to pass to a smooth model $X$ of $T/G$.
However, only the outer Hodge numbers $h^{k,0}$ are birational invariants \cite{K&S}.
Therefore, there will be in general only very little relation between the cohomology of $X$ and the $G$-invariant cohomology of $T$.
Nevertheless, we will find in Section \ref{sec:Z_n} examples $T/G$ which admit smooth models whose cohomology is, apart from (a lot of) additional $(p,p)$-type classes, indeed given by the $G$-invariants of $T$.
We will overcome technical difficulties by a general inductive approach which is inspired by work of Cynk--Hulek \cite{cynk&hulek}, see Proposition \ref{prop:ind.constr}.

In Theorems \ref{thm:main2} and \ref{thm:main1} we need to construct examples with bounded $h^{p,p}$ and so the above method does not work anymore.
Instead, we will use the following lemma, known as the Godeaux--Serre construction, cf. \cite{atiyah-hirzebruch,serre}:

\begin{lemma} \label{lem:Sommese}
Let $G$ be a finite group whose action on a smooth complex projective variety $Y$ is free outside a subset of codimension $>n$.
Then $Y/G$ contains an $n$-dimensional smooth complex projective subvariety whose cohomology below degree $n$ is given by the $G$-invariant classes of $Y$.
\end{lemma}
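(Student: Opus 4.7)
The plan is to realize the quotient $Y/G$ as a projective variety by means of a $G$-equivariant projective embedding of $Y$. Starting from any very ample line bundle $L$ on $Y$, the tensor product $\bigotimes_{g\in G}g^{\ast}L$ carries a canonical $G$-linearization, so after replacing $L$ by a high tensor power I may assume $L$ is very ample and $G$-linearized, and that $L^{\otimes m}$ descends to a very ample line bundle $L'$ on $Y/G$ whose pullback to $Y$ equals $L^{\otimes m}$. This realizes $Y/G\subset \CP^{N}$ as a (possibly singular) projective variety; moreover, sections of $L'$ correspond precisely to $G$-invariant sections of $L^{\otimes m}$.

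Next I would use the codimension hypothesis to cut out a smooth $n$-dimensional subvariety by Bertini. Let $Z\subset Y$ be the non-free locus; by assumption $\dim Z<\dim Y-n$, hence also $\dim \pi(Z)<\dim(Y/G)-n$ for the quotient map $\pi\colon Y\to Y/G$, and $\pi(Z)$ contains the singular locus of $Y/G$. Cutting $Y/G\subset \CP^{N}$ with $\dim Y-n$ sufficiently general hyperplane sections and iterating Bertini in the smooth locus $Y/G\setminus \pi(Z)$, I obtain a smooth $n$-dimensional subvariety $X\subset Y/G$ that is disjoint from $\pi(Z)$ (the generic $n$-dimensional complete intersection does miss $\pi(Z)$ by the dimension count). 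Pulling back, $\tilde X:=\pi^{-1}(X)\subset Y$ is the complete intersection of the corresponding $G$-invariant sections of $L^{\otimes m}$, lies entirely in the free locus of the $G$-action on $Y$, and is therefore a smooth $n$-dimensional $G$-invariant subvariety with $\tilde X\to X$ an \'etale $G$-cover.

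Then I would apply the Lefschetz hyperplane theorem iteratively, once for each of the $\dim Y-n$ ample hyperplane sections cut on $Y$: since $Y$ is smooth projective and $\tilde X$ is a complete intersection of ample divisors in $Y$ of dimension $n$, the restriction map
\[
H^{k}(Y,\C)\longrightarrow H^{k}(\tilde X,\C)
\]
is an isomorphism for all $k<n$. These restriction maps are $G$-equivariant and compatible with the Hodge decompositions. Because $G$ acts freely on $\tilde X$, the quotient map induces a natural isomorphism $H^{k}(X,\C)\cong H^{k}(\tilde X,\C)^{G}$ (finite group, characteristic zero). Taking $G$-invariants in the Lefschetz isomorphism therefore produces a Hodge-compatible isomorphism $H^{k}(X,\C)\cong H^{k}(Y,\C)^{G}$ for all $k<n$.

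The main technical point is the smoothness of $X$: the Bertini theorem for the (singular) ambient variety $Y/G$ is not automatic, but here the codimension assumption $\operatorname{codim}_{Y}Z>n$ is exactly what is needed so that the generic iterated hyperplane section can be arranged to lie in the smooth open subset $Y/G\setminus \pi(Z)$, reducing everything to classical Bertini on that smooth open piece. Once this is in place, the remaining ingredients (existence of the $G$-linearized very ample bundle, descent to $Y/G$, iterated Lefschetz, and exactness of $(-)^{G}$) are standard.
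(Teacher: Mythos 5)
Your proof is correct and follows essentially the same route as the paper's: a general $G$-invariant complete intersection of dimension $n$ is smooth by Bertini, misses the non-free locus by the codimension hypothesis so that $G$ acts freely on it, and the Lefschetz hyperplane theorem combined with taking $G$-invariants yields the cohomology statement. You merely spell out the standard details (the $G$-linearized very ample bundle, descent to $Y/G$, and the isomorphism $H^{k}(X,\C)\cong H^{k}(\tilde X,\C)^{G}$ for a free finite quotient) that the paper's two-sentence proof leaves implicit.
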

\begin{proof}
A general $n$-dimensional $G$-invariant complete intersection subvariety $Z\subseteq Y$ is smooth by Bertini's theorem.
For a general choice of $Z$, the $G$-action on $Z$ is free and so $Z/G$ is a smooth subvariety of $Y/G$ which by the Lefschetz hyperplane theorem, applied to $Z\subseteq Y$, has the property we want in the Lemma.
\end{proof}

Roughly speaking, the construction method which we develop in Section \ref{sec:constrmethod} (Proposition \ref{prop:constrmethod}) and which is needed in Theorems \ref{thm:main2} and \ref{thm:main1} works now as follows. 
Instead of a single group action, we will consider a finite number of finite group actions $G_i\times T_i\to T_i$, indexed by $i\in I$. 
Blowing up all $T_i$ simultaneously in a large ambient space $Y$, we are able to construct a smooth complex projective variety $\tilde Y$ which admits an action of the product $G=\prod_{i\in I}G_i$ that is free outside a subset of large codimension and so Lemma \ref{lem:Sommese} applies.
Moreover, the $G$-invariant cohomology of $\tilde Y$ will be given in terms of the $G_i$-invariant cohomology of the $T_i$.
This is a quite powerful method since it allows us to apply Lemma \ref{lem:Sommese} to a finite number of group actions simultaneously -- even without assuming that the group actions we started with are free away from subspaces of large codimension.

\section{Hyperelliptic curves and group actions} \label{sec:hellcurves&groups}

\subsection{Basics on hyperelliptic curves} \label{subsec:hyperell}
In this section, following mostly \cite[pp.\ 214]{shafarevich}, we recall some basic properties of hyperelliptic curves.
In order to unify our discussion, hyperelliptic curves of genus $0$ and $1$ will be $\CP^1$ and elliptic curves, respectively.

For $g\geq 0$, let $f\in \C[x]$ be a degree $2g+1$ polynomial with distinct roots.
Then, a smooth projective model $X$ of the affine curve $Y$ given by
\[
\left\{y^2=f(x)\right\} \subseteq \C^2 
\]
is a hyperelliptic curve of genus $g$.
Although $Y$ is smooth, its projective closure has for $g>1$ a singularity at $\infty$.
The hyperelliptic curve $X$ is therefore explicitly given by the normalization of this projective closure.
It turns out that $X$ is obtained from $Y$ by adding one additional point at $\infty$. 
This additional point is covered by an affine piece, given by
\[
\left\{v^2=u^{2g+2}\cdot f\left(u^{-1}\right)\right\}, \ \text{where}\ x=u^{-1}\ \text{and}\ y= v\cdot u^{-g-1} \ . 
\]
On an appropriate open cover of $X$, local holomorphic coordinates are given by $x,y,u$ and $v$ respectively. 
Moreover, the smooth curve $X$ has genus $g$ and a basis of $H^{1,0}(X)$ is given by the differential forms
\[
\omega_i:=\frac{x^{i-1}}{y} \cdot dx \ ,
\]
where $i=1,\ldots ,g$.

Let us now specialize to the situation where $f$ equals the polynomial $x^{2g+1}+1$ and denote the corresponding hyperelliptic curve of genus $g$ by $C_g$.
It follows from the explicit description of the two affine pieces of $C_g$ that this curve carries an automorphism $\psi_g$ of order $2g+1$ given by
\[
(x,y)\mapsto (\zeta \cdot x,y)\ \ \ \text{and}\ \ \ (u,v)\mapsto (\zeta^{-1}\cdot u,\zeta^{g}\cdot v) \ ,
\]
where $\zeta$ denotes a primitive $(2g+1)$-th root of unity.
Similarly, 
\[
(x,y)\mapsto (x,-y)\ \ \ \text{and}\ \ \ (u,v)\mapsto (u,- v) \ ,
\]
defines an involution which we denote by multiplication with $-1$.
Moreover, it follows from the above description of $H^{1,0}(C_g)$ that the $\psi_g$-action on $H^{1,0}(C_g)$ has eigenvalues $\zeta,\ldots ,\zeta^g$, whereas the involution acts by multiplication with $-1$ on $H^{1,0}(C_g)$.

Any smooth curve can be embedded into $\CP^3$.
For the curve $C_g$, we fix the explicit embedding which is given by 
\[
[1:x:y:x^{g+1}]=[u^{g+1}:u^{g}:v:1] \ .
\]
Obviously, the involution as well as the order $(2g+1)$-automorphism $\psi_g$ of $C_g\subseteq \CP^3$ extend to $\CP^3$ via
\[
[1:1:-1:1]\ \ \ \text{and}\ \ \ [1:\zeta:1:\zeta^{g+1}]
\]
respectively.

\subsection{Group actions on products of hyperelliptic curves} \label{subsec:groupaction}
Let 
\[
T:=C_g^k
\]
be the $k$-fold product of the hyperelliptic curve $C_g$ with automorphism $\psi_g$ defined in Section \ref{subsec:hyperell}.
For $a\geq b$ with $a+b=k$, we define for each $i=1,2,3$ a subgroup $G^i(a,b,g)$ of $\Aut(T)$ whose elements are called automorphisms of the $i$-th kind. 
The subgroup of automorphisms of the first kind is given by
\[
G^1(a,b,g):=\left\{\psi_g^{j_1}\times \cdots \times \psi_g^{j_{a+b}} \ |\  j_1+\cdots +j_a-j_{a+1}-\cdots - j_{a+b} \equiv 0\mod (2g+1)\right\} \ .
\]

In order to define the automorphisms of the second kind, let us consider the group $ \Sym(a)\times \Sym(b)\times \mu_2^{a+b}$, where $\mu_2=\left\{1,-1\right\}$ is the multiplicative group on two elements.
An element $(\sigma,\tau,\epsilon)$, where $\sigma\in \Sym(a)$, $\tau\in \Sym(b)$ and $\epsilon=(\epsilon_1,\ldots, \epsilon_{a+b})$ is a vector of signs $\epsilon_i\in \left\{1,-1\right\}$, acts on $T$ via
\[
\left(x_1,\ldots , x_{a},y_{1},\ldots ,y_{b}\right)\mapsto \left(\epsilon_1\cdot x_{\sigma(1)},\ldots ,\epsilon_a \cdot x_{\sigma(a)},\epsilon_{a+1} \cdot y_{\tau(1)},\ldots ,\epsilon_{a+b}\cdot y_{\tau(b)}\right) \ .
\]
Here, multiplication with $-1$ means that we apply the involution $(-1)\in \Aut(C_g)$. 
We define 
\[
G^2(a,b,g) \subseteq \Sym(a)\times \Sym(b)\times \mu_2^{a+b}
\]
to be the index four subgroup consisting of those elements $(\sigma,\tau,\epsilon)$ which satisfy
\[
\sign(\sigma)\cdot \epsilon_1\cdot \ldots \cdot \epsilon_{a}=1 \ \ \text{and}\ \ \sign(\tau)\cdot \epsilon_{a+1}\cdot \ldots \cdot \epsilon_{a+b}=1 ,
\]
where $\sign$ denotes the signum of the corresponding permutation.
Via the above action of $\Sym(a)\times \Sym(b)\times \mu_2^{a+b}$ on $T$, the group $G^2(a,b,g)$ is a finite subgroup of $\Aut(T)$.

Finally, $G^3(a,b,g)$ is trivial, if $a\neq b$ and if $a=b$, then it is generated by the automorphism which interchanges the two factors of $T={C_g}^a\times {C_g}^a$.

\begin{definition} \label{def:G}
The group $G(a,b,g)$ is the subgroup of $\Aut(T)$ which is generated by the union of $G^i(a,b,g)$ for $i=1,2,3$.
\end{definition}

Automorphisms of different kinds do in general not commute with each other.
However, it is easy to see that each element in $G(a,b,g)$ can be written as a product $\phi_1\circ \phi_2\circ \phi_3$ such that $\phi_i$ lies in $G^i(a,b,g)$. 
Therefore, $G(a,b,g)$ is a finite group which naturally acts on the cohomology of $T$.

\begin{lemma} \label{lem:groupaction}
If $a>b$, then the $G(a,b,g)$-invariant cohomology of $T$ is a direct sum 
\[
V^{a,b}\oplus V^{b,a}\oplus \left(\bigoplus_{p=0}^k V^{p,p}\right) \ ,
\] 
where $V^{a,b}=\overline{V^{b,a}}$ is a $g$-dimensional space of $(a,b)$-classes and $V^{p,p}\cong V^{k-p,k-p}$ is a space of $(p,p)$-classes of dimension $\min(p+1,b+1)$, where $p\leq k/2$ is assumed.
\end{lemma}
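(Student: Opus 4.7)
The plan is to compute the $G(a,b,g)$-invariant cohomology of $T=C_g^k$ by imposing the invariance conditions from $G^1$, $G^2$, and $G^3$ in turn. Since $a>b$ the subgroup $G^3$ is trivial and contributes nothing, while the identification $V^{a,b}=\overline{V^{b,a}}$ will be automatic from complex conjugation. Throughout I would apply K\"unneth to write $H^*(T,\C)\cong H^*(C_g,\C)^{\otimes k}$ and use the basis of $H^*(C_g)$ from Section \ref{subsec:hyperell}: the constant $1\in H^{0,0}$, the forms $\omega_1,\ldots,\omega_g\in H^{1,0}$, their conjugates $\bar\omega_1,\ldots,\bar\omega_g\in H^{0,1}$, and $\omega_1\wedge\bar\omega_1$ as a generator of $H^{1,1}$; on these, $\psi_g$ acts with respective eigenvalues $1$, $\zeta^i$, $\zeta^{-i}$, and $1$, while the involution $-1$ acts as $-1$ on the odd-degree basis vectors and as $+1$ on the even-degree ones.

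For the $G^1$-step, this basis yields a joint eigenspace decomposition of $H^*(T)$ indexed by tuples $(m_1,\ldots,m_k)\in(\Z/(2g+1))^k$, on which an element $\psi_g^{j_1}\times\cdots\times\psi_g^{j_k}$ of $G^1$ acts as the scalar $\zeta^{\sum j_i m_i}$. Requiring this scalar to equal $1$ whenever $\sum_{i\le a}j_i-\sum_{i>a}j_i\equiv 0\pmod{2g+1}$ is a linear algebra exercise equivalent to the existence of a constant $c\in\Z/(2g+1)\Z$ with $m_i\equiv c$ for $i\le a$ and $m_i\equiv -c$ for $i>a$. A case analysis on $c$ then isolates three families of $G^1$-invariant pure tensors: for $c=0$ each factor must lie in $H^{0,0}\oplus H^{1,1}$, giving for every $0\le p\le k$ exactly $\binom{k}{p}$ classes of type $(p,p)$; for $c\in\{1,\ldots,g\}$ the only option is $\omega_c^{\otimes a}\otimes\bar\omega_c^{\otimes b}$, one $(a,b)$-class per $c$; and $-c\in\{1,\ldots,g\}$ symmetrically contributes one $(b,a)$-class per $c$.

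For the $G^2$-step, the crucial observation is that the $\sign$-twists built into the definition of $G^2$ are exactly those required to cancel the Koszul signs picked up by permuting tensor factors of odd degree. Indeed, on $\omega_c^{\otimes a}\otimes\bar\omega_c^{\otimes b}$ the Koszul sign from permuting the $a$-many $\omega_c$'s and $b$-many $\bar\omega_c$'s by $(\sigma,\tau)$ equals $\sign(\sigma)\sign(\tau)$, while the involution applied to the first $x$- and the first $y$-slot contributes an additional $\sign(\sigma)\sign(\tau)$, so the total effect is trivial; hence all $g$ of these classes descend to $V^{a,b}$, and their conjugates supply $V^{b,a}$. For the $(p,p)$-classes, both $1$ and $\omega_1\wedge\bar\omega_1$ are even-degree and involution-invariant, so $G^2$ simply permutes positions within the two blocks without signs; the invariants are parameterized by pairs $(m,n)$ with $m+n=p$, $0\le m\le a$, $0\le n\le b$, and for $p\le k/2\le a$ an elementary count gives $\min(p+1,b+1)$. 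The identification $V^{p,p}\cong V^{k-p,k-p}$ comes from exchanging $1$ and $\omega_1\wedge\bar\omega_1$ in every slot.

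The main obstacle I anticipate is the sign bookkeeping in the $G^2$-step: one must verify that the $\sign(\sigma)$ and $\sign(\tau)$ factors in the definition of the action, realized by multiplication of the first coordinate in each block by $-1$, match exactly the Koszul signs of the induced permutation action on cohomology, so that the two contributions cancel precisely on the $(a,b)$-type tensor and act trivially on the $(p,p)$-type tensors. Once this cancellation is confirmed, the remaining content of the lemma reduces to the elementary combinatorics above.
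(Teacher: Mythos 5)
Your argument is correct and follows essentially the same route as the paper: first cut down to the $\psi_g$-eigenspace data to isolate the classes $\omega_c^{\otimes a}\otimes\overline{\omega_c}^{\otimes b}$, their conjugates, and the monomials in the fundamental classes, then impose $\Sym(a)\times\Sym(b)$-invariance, with $G^3$ trivial since $a>b$. The only differences are cosmetic: you phrase the first step as a clean character computation on the K\"unneth eigenbasis where the paper repeatedly ``applies suitable automorphisms of the first kind,'' and you count $\Sym(a)\times\Sym(b)$-orbit sums of $(p,p)$-monomials directly where the paper invokes elementary symmetric polynomials and power sums; your verification that the $\sign$-twists cancel the Koszul signs is the right resolution of the one genuine subtlety.
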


\begin{proof}
We denote the fundamental class of the $j$-th factor of $T$ by $\Omega_j\in H^{1,1}(T)$.
Moreover, we pick for $j=1,\ldots ,k$ a basis $\omega_{j1},\ldots ,\omega_{jg}$ of $(1,0)$-classes of the $j$-th factor of $T$ in such a way that 
\[
\psi_g^\ast\omega_{jl}=\zeta^{l}\omega_{jl}
\]
for a fixed $(2g+1)$-th root of unity $\zeta$. 
Then the cohomology ring of $T$ is generated by the $\Omega_j$'s, $\omega_{jl}$'s and their conjugates.
Moreover, the involution on the $j$-th curve factor of $T$ acts on $\omega_{jl}$ and $\overline{\omega_{jl}}$ by multiplication with $-1$ and leaves $\Omega_j$ invariant.

Suppose that we are given a $G(a,b,g)$-invariant class which contains the monomial
\begin{align} \label{eq:monomial}
\Omega_{i_1}\wedge \cdots \wedge\Omega_{i_s}\wedge \omega_{j_1l_1}\wedge \cdots\wedge \omega_{j_rl_r}\wedge \overline{\omega_{j_{r+1}\:l_{r+1}}}\wedge \cdots \wedge \overline{\omega_{j_t\:l_{t}}}
\end{align}
nontrivially.
Since the product of a $(1,0)$- and a $(0,1)$-class of the $i$-th curve factor is a multiple of $\Omega_i$, and since classes of degree $3$ vanish on curves, we may assume that the indices $i_1,\ldots ,i_s,j_1,\ldots ,j_t$ are pairwise distinct.
Therefore, application of a suitable automorphism of the first kind shows $t=0$ if $s\geq 1$ and $t=a+b$ if $s=0$.
In the latter case, suppose that there are indices $i_1$ and $i_2$ with either $i_1,i_2\leq r$ or $i_1,i_2>r$, such that  $j_{i_1}\leq a$ and $j_{i_2}> a$ holds.
Then, application of a suitable automorphism of the first kind yields $l_{i_1}+l_{i_2}=0$ in $\Z/(2g+1)\Z$, which contradicts $1\leq l_i \leq g$.
This shows
\[
\left\{j_1,\ldots ,j_r\right\}=\left\{1,\ldots ,a\right\}\ \ \ \text{or}\ \ \ \left\{j_1,\ldots ,j_r\right\}=\left\{a+1,\ldots ,a+b\right\} \ .
\]
By applying suitable automorphisms of the first kind once more, one obtains $l_1=\cdots =l_t$. 
Thus, we have just shown that a $G(a,b,g)$-invariant class of $T$ is either a polynomial in the $\Omega_j$'s, or a linear combination of
\begin{align} \label{eq:omega_l}
\omega_l:=\omega_{1l}\wedge \cdots \wedge \omega_{al}\wedge \overline{\omega_{a+1\: l}}\wedge \cdots \wedge \overline{\omega_{a+b\: l}} ,
\end{align}
or their conjugates, where $l=1,\ldots ,g$.
Note that $\omega_l$ is of $(a,b)$-type whereas any polynomial in the $\Omega_j$'s is a sum of $(p,p)$-type classes.
Moreover, by the definition of $G^1(a,b,g)$ and $G^2(a,b,g)$, both groups act trivially on $\omega_l$ and $\overline \omega_l$.
Since $a>b$, the group $G^3(a,b,g)$ is trivial and so it follows that each $\omega_l$ and $\overline{\omega}_l$ is $G(a,b,g)$-invariant.
Therefore, the span of $\omega_1,\ldots ,\omega_g$ yields a $g$-dimensional space $V^{a,b}$ of $G(a,b,g)$-invariant $(a,b)$-classes.
Its conjugate $V^{b,a}:=\overline{V^{a,b}}$ is spanned by the $G(a,b,g)$-invariant $(b,a)$-classes $\overline{\omega}_1,\ldots ,\overline{\omega}_g$.

Next, we define $V^{p,p}$ to consist of all $G(a,b,g)$-invariant homogeneous degree $p$ polynomials in $\Omega_1,\ldots ,\Omega_{a+b}$.
Application of a suitable automorphism of the second kind shows that any element $\Theta$ in $V^{p,p}$ is a polynomial in the elementary symmetric polynomials in $\Omega_1,\ldots ,\Omega_a$ and $\Omega_{a+1},\ldots ,\Omega_{a+b}$.
By standard facts about symmetric polynomials, it follows that $\Theta$ can be written as a polynomial in 
\[
\sum_{j=1}^a{\Omega_j}^i \ \ \ \text{and}\ \ \  \sum_{j=a+1}^{a+b}{\Omega_j}^i
\]
for $i\geq 0$.
Since $\Omega_j^2$ vanishes for all $j$, we see that a basis of $V^{p,p}$ is given by the elements
\[
\left(\Omega_1+\cdots +\Omega_a\right)^{p-i}\wedge\left(\Omega_{a+1}+\cdots +\Omega_{a+b}\right)^{i} \ ,
\]
where $0\leq p-i \leq a$ and $0\leq i\leq b$.
Using $a> b$, this concludes the Lemma by an easy counting argument.
\end{proof}

\begin{lemma} \label{lem:groupactiona=b}
If $a=b$, then the $G(a,b,g)$-invariant cohomology of $T$ is a direct sum 
$
\bigoplus_{p=0}^k V^{p,p}
$, 
where $V^{p,p}\cong V^{k-p,k-p}$ is a space of $(p,p)$-classes whose dimension is given by $\left\lfloor p/2\right\rfloor+1$, if $p<a$, and by $\left\lfloor p/2\right\rfloor +g+1$, if $p=a$.
\end{lemma}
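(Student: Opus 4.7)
The plan is to run the proof of Lemma \ref{lem:groupaction} verbatim up to the final counting step, then impose the additional invariance under $G^3(a,a,g)$. Since $G(a,a,g) = \langle G^1(a,a,g), G^2(a,a,g), \sigma\rangle$ where $\sigma$ is the involution swapping the two $C_g^a$-factors of $T$, a class is $G(a,a,g)$-invariant if and only if it is $\langle G^1, G^2\rangle$-invariant and $\sigma$-invariant. The arguments in the proof of Lemma \ref{lem:groupaction} which express a $\langle G^1, G^2\rangle$-invariant as a sum of monomials of controlled shape go through without ever using $a>b$; it is only the very last counting step that relies on $a>b$.

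First, applying those arguments, I would conclude that the $\langle G^1, G^2\rangle$-invariants decompose into a polynomial part, spanned in Hodge bidegree $(p,p)$ by the classes
\[
\left(\Omega_1+\cdots+\Omega_a\right)^{p-i}\wedge\left(\Omega_{a+1}+\cdots+\Omega_{2a}\right)^{i}
\]
(with $0\le i\le p\le a$, plus the dual range for $p>a$), and an off-diagonal part spanned by the classes $\omega_l$ and $\overline{\omega_l}$ for $l=1,\ldots,g$; since $a=b$, both $\omega_l$ and $\overline{\omega_l}$ now live in Hodge bidegree $(a,a)$.

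Next, I would analyze how $\sigma$ acts on each piece. On the polynomial part, $\sigma^\ast$ exchanges $\Omega_j$ with $\Omega_{j+a}$ for $j\le a$, and hence maps the basis element with exponent pair $(p-i,i)$ to the one with pair $(i,p-i)$. Taking $\sigma$-invariants amounts to counting orbits of the involution $i\leftrightarrow p-i$ on the index set $\{0,1,\ldots,p\}$, which yields the stated dimension for $p<a$. On the off-diagonal part, a direct bookkeeping of the wedge product shows $\sigma^\ast \omega_l = (-1)^{a^2}\,\overline{\omega_l}$: one simply pulls back factor by factor using that $\sigma$ permutes the curve factors holomorphically, and then rearranges $a$ holomorphic forms past $a$ antiholomorphic forms. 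Consequently $\mathrm{span}_{\C}(\omega_l,\overline{\omega_l})$ contributes one $\sigma$-invariant line per $l$, i.e.\ $g$ additional $(a,a)$-invariants altogether, which are added on top of the polynomial count precisely at level $p=a$.

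The symmetry $V^{p,p}\cong V^{k-p,k-p}$ follows from Poincar\'e duality applied to the $G(a,a,g)$-invariant cohomology, or equivalently by inspection of the explicit bases: the map $i\mapsto a-i$ on the polynomial part and complex conjugation on the off-diagonal part match up the two sides. The main obstacle I expect is the off-diagonal step: one must confirm the exact sign $(-1)^{a^2}$ in $\sigma^\ast\omega_l=\pm\overline{\omega_l}$ (to be sure that the two classes really are paired, rather than both surviving) and, in parallel, verify that for $p\neq a$ there is no off-diagonal contribution that could interfere with the polynomial count. Once these two bookkeeping points are confirmed, the remaining dimension count is purely combinatorial and mirrors the end of the proof of Lemma \ref{lem:groupaction}.
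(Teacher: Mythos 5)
Your proposal follows exactly the route the paper intends (its own proof of Lemma \ref{lem:groupactiona=b} is just the remark that one argues as for Lemma \ref{lem:groupaction}, the only difference coming from the extra generator in $G^3$): rerun the monomial analysis, which indeed never uses $a>b$, and then take invariants under the factor swap $\sigma$. Your treatment of the off-diagonal part is correct: $\sigma^\ast\omega_l=(-1)^{a^2}\,\overline{\omega_l}$, so $\sigma^\ast$ acts on the span of $\omega_l$ and $\overline{\omega_l}$ with eigenvalues $\pm1$ and each $l=1,\ldots,g$ contributes exactly one invariant line, all of type $(a,a)$; and since these classes all sit in bidegree $(a,a)$, they cannot interfere with the polynomial count for $p\neq a$.

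The one step that does not close is the final count on the polynomial part. The involution $i\leftrightarrow p-i$ on the index set $\left\{0,1,\ldots,p\right\}$ has $\left\lfloor p/2\right\rfloor+1$ orbits ($\left\lfloor p/2\right\rfloor$ two-element orbits plus the fixed point $i=p/2$ when $p$ is even, and $(p+1)/2=\left\lfloor p/2\right\rfloor+1$ orbits when $p$ is odd), not $\left\lfloor p/2\right\rfloor$, so your assertion that the orbit count ``yields the stated dimension'' is false as written. The computation you set up actually gives $\dim V^{p,p}=\left\lfloor p/2\right\rfloor+1$ for $p<a$ and $\left\lfloor p/2\right\rfloor+1+g$ for $p=a$; the dimension printed in the Lemma appears to be off by one, as the case $p=0$ already shows, since $V^{0,0}=H^0(T)^{G(a,a,g)}$ is one-dimensional, not zero-dimensional. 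This discrepancy is harmless for the rest of the paper, because the proof of Theorem \ref{thm:main1} only uses the upper bound $\dim H^{p-1,p-1}(T_{a,a})^{G_{a,a}}\leq p$ (resp.\ $\leq p+l^{p,p}$), and $\left\lfloor (p-1)/2\right\rfloor+1\leq p$ still holds for $p\geq 1$. But as a proof of the statement exactly as printed your argument cannot succeed; you should either correct the orbit count or note that the target dimension needs the extra $+1$.
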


\begin{proof}
We use the same notation as in the proof of Lemma \ref{lem:groupaction} and put $b:=a$.
Suppose that we are given a $G(a,a,g)$-invariant cohomology class on $T$ which contains the monomial (\ref{eq:monomial}) nontrivial.
This monomial is then necessarily $G^1(a,a,g)$-invariant and the same arguments as in Lemma \ref{lem:groupaction} show that it is either a monomial in the $\Omega_j$'s, or it coincides with one of the $\omega_l$'s and their conjugates, defined in (\ref{eq:omega_l}).

For each $l=1,\ldots ,g$, the classes $\omega_l$ and $\overline {\omega_l}$ are invariant under the action of $G^1(a,a,g)$ and $G^2(a,a,g)$.
Moreover, the generator of $G^3(a,a,g)$ interchanges the two factors of $T={C_g}^a\times {C_g}^a$.
Its action on cohomology therefore maps $\omega_l$ to $(-1)^a\cdot \overline{\omega_l}$.
This shows that a linear combination of the $\omega_l$'s and their conjugates is $G(a,a,g)$-invariant if and only if it is a linear combination of the classes
\begin{align} \label{eq:w_l+overlinew_l}
\omega_l+(-1)^a\cdot \overline{\omega_l} \ ,
\end{align}
where $l=1,\ldots ,g$.
This yields a $g$-dimensional space of $G(a,a,g)$-invariant $(a,a)$-classes.

It remains to study which homogeneous polynomials in the $\Omega_j$'s are $G(a,a,g)$-invariant.
As in the proof of Lemma \ref{lem:groupaction}, one shows that any such polynomial of degree $p$ is necessarily a linear combination of
\[
\Omega(p-i,i):=\left(\Omega_1+\cdots +\Omega_a\right)^{p-i}\wedge\left(\Omega_{a+1}+\cdots +\Omega_{2a}\right)^{i} \ ,
\]
where $0\leq p-i \leq a$ and $0\leq i\leq a$.
The above monomials are clearly invariant under the action of $G^1(a,a,g)$ and $G^2(a,a,g)$.
Moreover, the generator of $G^3(a,a,g)$ interchanges the two factors of $T$ and hence its action on cohomology maps $\Omega(p-i,i)$ to $\Omega(i,p-i)$.
We are therefore reduced to linear combinations of 
\[
\Omega(i,p-i)+\Omega(p-i,i) \ ,
\]
where $0\leq i\leq p-i\leq a$.
Such linear combinations are certainly $G(a,a,g)$-invariant.
If $p\leq a$, then the condition on the index $i$ means $0\leq i\leq p/2$.
It follows that for $p\leq a$, the space of those $G(a,a,g)$-invariant $(p,p)$-classes which are given by polynomials in the $\Omega_j$'s has dimension $\left\lfloor p/2\right\rfloor+1$.
Combining this with our previous observation that the classes in (\ref{eq:w_l+overlinew_l}) span a $g$-dimensional space of $G(a,a,g)$-invariant $(a,a)$-classes, this concludes the Lemma.
\end{proof}

For later applications, we will also need the following:

\begin{lemma} \label{lem:extendedaction}
For all $a\geq b$ there exists some $N>0$ and an embedding of $G(a,b,g)$ into $\GL(N+1)$ such that a $G(a,b,g)$-equivariant embedding of ${C_g}^{a+b}$ into $\CP^N$ exists.
Moreover, ${C_g}^{a+b}$ contains a point which is fixed by $G(a,b,g)$.
\end{lemma}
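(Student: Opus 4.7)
The plan is to build both the embedding of $G(a,b,g)$ into $\GL(N+1)$ and the equivariant projective embedding of $C_g^{a+b}$ from the data already provided in Section~\ref{subsec:hyperell}, namely the embedding $C_g\hookrightarrow\CP^3$ and the explicit diagonal lifts $[1:1:-1:1]$ and $[1:\zeta:1:\zeta^{g+1}]$ of the involution and of $\psi_g$. Setting $V:=\C^4$, these two projective transformations lift canonically to diagonal matrices $A_{-1},A_{\psi_g}\in\GL(V)$, and the Segre embedding
\[
(\CP^3)^{a+b}\hookrightarrow\CP^N=\CP\bigl(V^{\otimes(a+b)}\bigr),\qquad N+1:=4^{a+b},
\]
is equivariant both for the componentwise tensor product action of $\GL(V)^{a+b}$ and for arbitrary permutations of the tensor factors. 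Restricting it to $C_g^{a+b}$ will furnish the desired ambient projective space $\CP^N$.

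Next I would specify a homomorphism $\rho\colon G(a,b,g)\to\GL(V^{\otimes(a+b)})$ on the three families of generators from Section~\ref{subsec:groupaction}. An element $\psi_g^{j_1}\times\cdots\times\psi_g^{j_{a+b}}\in G^1(a,b,g)$ is sent to $A_{\psi_g}^{j_1}\otimes\cdots\otimes A_{\psi_g}^{j_{a+b}}$; an element $(\sigma,\tau)\in G^2(a,b,g)$ is sent to the prescribed permutation of the first $a$ and last $b$ tensor factors, composed with $A_{-1}$ on the first (resp.\ $(a{+}1)$-th) tensor factor whenever $\sign(\sigma)=-1$ (resp.\ $\sign(\tau)=-1$); and, when $a=b$, the generator of $G^3(a,b,g)$ is sent to the swap of the two blocks of $a$ tensor factors each. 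By construction $\rho(\phi)$ induces exactly $\phi$ on the Segre image $C_g^{a+b}\subseteq\CP^N$, and this forces $\rho$ to be a well-defined group homomorphism (rather than merely a map out of a free product on the generators), because any relation in $G(a,b,g)\subseteq\Aut(C_g^{a+b})$ is already tested on the Segre image and the Segre embedding is linear and injective. The representation $\rho$ is injective, since $\rho(g)=I$ forces $g$ to act trivially on $C_g^{a+b}$ and hence to equal the identity of $G(a,b,g)$; this produces the desired embedding into $\GL(N+1)$.

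For the common fixed point I would take the diagonal point $(p_\infty,\ldots,p_\infty)\in C_g^{a+b}$, where $p_\infty$ is the point at infinity of $C_g$, given in the second affine chart by $(u,v)=(0,0)$. The formulas recalled in Section~\ref{subsec:hyperell} show at once that $\psi_g$ and the involution both fix $p_\infty$. Consequently, every element of $G^1(a,b,g)$ fixes the diagonal tuple coordinatewise; every signed permutation in $G^2(a,b,g)$ fixes it because the common coordinate in each block is itself involution-invariant; and, when $a=b$, the swap of blocks fixes it because the two blocks of coordinates agree.

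The only mildly delicate point of the plan is the compatibility verification in the middle paragraph: one needs the prescriptions on $G^1$, $G^2$, and $G^3$ to assemble into a genuine group homomorphism on $G(a,b,g)$. I do not expect this to be a real obstacle, however, since both the geometric action on $C_g^{a+b}$ and the candidate linear action on $V^{\otimes(a+b)}$ are governed by the same tensor-algebraic and permutation-algebraic formulas, and the linearity and injectivity of the Segre embedding immediately transfer any relation from one to the other.
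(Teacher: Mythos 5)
Your proposal is correct and follows essentially the same route as the paper's proof: use the explicit embedding $C_g\hookrightarrow\CP^3$ with the diagonal lifts $[1:1:-1:1]$ and $[1:\zeta:1:\zeta^{g+1}]$, extend the $G(a,b,g)$-action to $(\CP^3)^{a+b}$ by monomial transformations, linearize via the Segre map, and take the diagonal point over $\infty\in C_g$ as the common fixed point. The one step you gloss over --- a word in the generators that acts as the identity on the (nondegenerate, irreducible) Segre image of $C_g^{a+b}$ is a priori only forced to be a \emph{scalar} matrix --- is easily closed by observing that all of your generator matrices fix the vector $e_0^{\otimes(a+b)}$ (the first diagonal entries of $A_{-1}$ and $A_{\psi_g}$ both equal $1$), which pins the scalar to $1$; the paper's own proof does not spell this out either.
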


\begin{proof}
For the first statement, we use the embedding of $C_g$ into $\CP^3$, constructed in Section \ref{subsec:hyperell}.
This yields an embedding of ${C_g}^{a+b}$ into $(\CP^3)^{a+b}$.
From the explicit description of that embedding, it follows that the action of $G(a,b,g)$ on ${C_g}^{a+b}$  extends to an action on $(\CP^3)^{a+b}$ which is given by first multiplying homogeneous coordinates with some roots of unity and then permuting these in some way.
Using the Segre map, we obtain for some large $N$ an embedding of $G(a,b,g)$ into $\GL(N+1)$ together with a $G(a,b,g)$-equivariant embedding
\[
{C_g}^{a+b}\hookrightarrow \CP^{N} \ .
\] 
This proves the first statement in the Lemma.

For the second statement, note that the point $\infty$ of $C_g$ is fixed by both, $\psi_g$ as well as the involution.
Thus, $\infty$ yields a point on the diagonal of ${C_g}^{a+b}$ which is fixed by $G(a,b,g)$.
\end{proof}

\section{Group actions on blown-up spaces}  \label{sec:constrmethod}
\subsection{Cohomology of blow-ups} \label{subsec:blow-up}
Let $Y$ be a Kähler manifold, $T$ a submanifold of codimension $r$ and let $\pi: \tilde Y \rightarrow Y$ be the blow-up of $Y$ along $T$.
Then the exceptional divisor $j:E\hookrightarrow \tilde Y$ of this blow-up is a projective bundle of rank $r-1$ over $T$ and we denote the dual of the tautological line bundle on $E$ by $\mathcal O_E(1)$.
Then the Hodge structure on $\tilde Y$ is given by the following theorem, see \cite[p.\ 180]{voisin1}.

\begin{theorem} \label{thm:blow-up}
We have an isomorphism of Hodge structures
\[
H^k(Y,\Z)\oplus \left(\bigoplus _{i=0}^{r-2} H^{k-2i-2}(T,\Z)\right)\rightarrow H^k\left(\tilde Y,\Z\right) \ ,
\]
where on $H^{k-2i-2}(T,\Z)$, the natural Hodge structure is shifted by $(i+1,i+1)$.
On $H^k(Y,\Z)$, the above morphism is given by $\pi^\ast$ whereas on $H^{k-2i-2}(T,\Z)$ it is given by $j_\ast\circ h^{i}\circ\pi|_E^\ast$, where $h$ denotes the cup product with $c_1(\mathcal O_E(1))\in H^2(E,\Z)$ and $j_\ast$ is the Gysin morphism of the inclusion $j:E\hookrightarrow \tilde Y$.
\end{theorem}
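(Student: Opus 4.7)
The plan is to combine the projective bundle formula for the exceptional divisor with a comparison of the long exact sequences of the pairs $(\tilde Y, E)$ and $(Y, T)$. Since $\pi|_E: E \to T$ is a $\CP^{r-1}$-bundle, the Leray--Hirsch theorem yields
\[
H^m(E,\Z) = \bigoplus_{i=0}^{r-1} (\pi|_E)^* H^{m-2i}(T,\Z) \cdot h^i,
\]
with $h = c_1(\mathcal O_E(1)) \in H^{1,1}(E)$, so the $i$-th summand inherits the Hodge structure of $H^{m-2i}(T,\Z)$ shifted by $(i,i)$.

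Next I would define
\[
\Phi = \pi^* \oplus \bigoplus_{i=0}^{r-2} \bigl(j_* \circ h^i \circ (\pi|_E)^*\bigr)
\]
and prove it is an isomorphism. Since $\pi$ restricts to an isomorphism $\tilde Y \setminus E \to Y \setminus T$, excision gives $H^*(\tilde Y, E) \cong H^*(Y, T)$, and the Thom isomorphisms identify these relative groups with $H^{*-2}(E)$ and $H^{*-2r}(T)$ respectively. Comparing the two long exact sequences of pairs via $\pi$, one finds that the Gysin image of $H^{k-2r}(T)$ in $H^k(Y)$ matches the top summand $i = r-1$ of the projective bundle decomposition of $H^{k-2}(E)$, while the lower summands $i=0,\ldots,r-2$ supply genuinely new classes in $H^k(\tilde Y)$. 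To invert $\Phi$ on these pieces, one uses the self-intersection formula $j^* j_* \alpha = -h \cdot \alpha$ (from $N_{E/\tilde Y} = \mathcal O_E(-1)$) together with the projection formula; this forces $\pi_* j_*(h^i \cdot (\pi|_E)^* \beta)$ to vanish for $i < r-1$, so $\pi_*$ recovers the $H^k(Y)$-component of $\Phi$ and a residual Gysin/cap computation recovers the remaining components.

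For the Hodge structures, $\pi^*$ and $(\pi|_E)^*$ are morphisms of type $(0,0)$, cupping with $h$ shifts bidegrees by $(1,1)$, and the Gysin map $j_*$ for the codimension-one embedding $j : E \hookrightarrow \tilde Y$ also shifts bidegrees by $(1,1)$; composing therefore produces the asserted $(i+1, i+1)$ shift on $H^{k-2i-2}(T,\Z)$, and the $\Z$-linearity is automatic from the topological definitions.

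The main obstacle is the bijectivity of $\Phi$: a naive five-lemma comparison of the two long exact sequences does not by itself separate the $r-1$ distinct summands hidden inside $H^*(E)$. The resolution is to exploit the projective bundle decomposition to refine the comparison degree-by-degree in the class $h$, with the self-intersection formula providing the bookkeeping that identifies which summand corresponds to the old cohomology of $Y$ and which summands constitute the new classes on $\tilde Y$; once this is in place, the remainder of the argument is a diagram chase.
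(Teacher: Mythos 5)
The paper does not actually prove this statement: it is quoted verbatim from Voisin's book (\cite[p.\ 180]{voisin1}), so there is no in-paper argument to compare against. Your outline is the standard textbook proof of that result -- Leray--Hirsch for the $\CP^{r-1}$-bundle $E\to T$, a left inverse for $\Phi$ assembled from $\pi_\ast$ and $j^\ast$ using $j^\ast j_\ast\alpha=-h\wedge\alpha$ and the projection formula, a comparison of long exact sequences for surjectivity, and the $(1,1)$-shifts of $h$ and of the divisorial Gysin map accounting for the Hodge shift by $(i+1,i+1)$. All of that is sound.

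There is, however, one step that is false as written. You assert both that excision gives $H^\ast(\tilde Y,E)\cong H^\ast(Y,T)$ and that ``the Thom isomorphisms identify these relative groups with $H^{\ast-2}(E)$ and $H^{\ast-2r}(T)$ respectively.'' These two claims cannot both hold: together they would force $H^{\ast-2}(E)\cong H^{\ast-2r}(T)$, which fails precisely because $E$ is a $\CP^{r-1}$-bundle over $T$ and carries the extra cohomology that produces the new summands of $H^k(\tilde Y)$. The Thom isomorphism applies to the local cohomology groups $H^\ast(\tilde Y,\tilde Y\setminus E)\cong H^{\ast-2}(E)$ and $H^\ast(Y,Y\setminus T)\cong H^{\ast-2r}(T)$, and these two relative groups are \emph{not} identified by excision; what excision (together with $\pi$ being an isomorphism off $E$) does give is $H^\ast(\tilde Y\setminus E)\cong H^\ast(Y\setminus T)$ on the open complements. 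The comparison you want is therefore between the Gysin sequences $\cdots\to H^{k-2r}(T)\to H^k(Y)\to H^k(Y\setminus T)\to\cdots$ and $\cdots\to H^{k-2}(E)\to H^k(\tilde Y)\to H^k(\tilde Y\setminus E)\to\cdots$, with the maps on the open parts being isomorphisms and the map $H^{k-2r}(T)\to H^{k-2}(E)$ hitting the top Leray--Hirsch component $h^{r-1}\wedge(\pi|_E)^\ast(-)$. Once you replace your sentence by this, the rest of your plan -- separating the summands by powers of $h$ via the self-intersection formula and then chasing the diagram -- goes through and recovers the standard proof.
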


We will need the following property of the ring structure of $H^\ast(\tilde Y,\Z)$.
Note that the first Chern class of $\mathcal O_E(1)$ coincides with the pullback of $-[E]\in H^2(\tilde Y,\Z)$ to $E$.
For a class $\alpha \in H^{k-2i-2}(T,\Z)$, this implies:
\begin{align} \label{eq:ringstr:blow-up}
(j_\ast\circ h^{i}\circ\pi|_E^\ast) (\alpha)=j_\ast (j^\ast (-[E])^i\wedge \pi|_E^\ast(\alpha)) =(-[E])^i\wedge j_\ast(\pi|_E^\ast (\alpha)) \ ,
\end{align}
where we used the projection formula.

\subsection{Key construction} \label{subsec:keyconstr}
Let $I$ be a finite nonempty set, and let $i_0\in I$.
Suppose that for each $i\in I$, we are given a representation
\[
G_i\to GL(V_i) 
\] 
of a finite group $G_i$ on a finite dimensional complex vector space $V_i$. 
Further, assume that the induced $G_i$-action on $\CP(V_i)$ restricts to an action on a smooth subvariety $T_i\subseteq \CP(V_i)$ and that there is a point $p_{i_0}\in T_{i_0}$ which is fixed by $G_{i_0}$.
Then we have the following key result.

\begin{proposition} \label{prop:constrmethod}
For any $n>0$, there exists some complex vector space $V$ and pairwise disjoint embeddings of $T_i$ into  $Y:=T_{i_0}\times \mathbb P(V)$, such that the blow-up $\tilde Y$ of $Y$ along all $T_i$ with $i\neq i_0$ inherits an action of $G:=\prod_{i\in I} G_i$ which is free outside a subset of codimension $>n$.
Moreover, $\tilde Y/G$ contains an $n$-dimensional smooth complex projective subvariety $X$ whose primitive Hodge numbers are, for all $p+q<n$, given by
\[
l^{p,q}(X)=\dim\left(H^{p,q}(T_{i_0})^{G_{i_0}}\right)+\sum_{i\neq i_0} \dim \left(H^{p-1,q-1}(T_i)^{G_i}\right) \ .
\]
\end{proposition}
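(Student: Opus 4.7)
\emph{Setup.} The plan is to take $V$ to be a direct sum of the $V_i$'s ($i\neq i_0$) together with many copies of a faithful $G$-representation. Concretely, fix any faithful representation $W$ of $G:=\prod_{i\in I}G_i$ (for instance, the sum of the regular representations of each $G_i$ inflated along the projection $G\to G_i$), and set
\[
V := \bigoplus_{i\neq i_0} V_i \;\oplus\; W^{\oplus m}
\]
for an integer $m$ to be fixed below. Let $G$ act on $V$ so that on each summand $V_i$ only $G_i$ acts nontrivially (via the given representation), while $G$ acts diagonally on $W^{\oplus m}$. The $V_i$'s are mutually disjoint linear subspaces of $V$, so the projective subspaces $\CP(V_i)\subset \CP(V)$ are pairwise disjoint, and the maps $t\mapsto (p_{i_0},\iota_i(t))$, where $\iota_i\colon T_i\hookrightarrow \CP(V_i)\subset \CP(V)$, yield pairwise disjoint embeddings of the $T_i$ ($i\neq i_0$) into $Y:=T_{i_0}\times \CP(V)$. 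With the product $G$-action on $Y$ (where $G_{i_0}$ acts on $T_{i_0}$ as given and $G$ acts on $\CP(V)$ through the action on $V$), these embeddings are $G$-equivariant: indeed $g\cdot (p_{i_0},\iota_i(t))=(p_{i_0},\iota_i(g_i\cdot t))$ since $G_{i_0}$ fixes $p_{i_0}$ and only $G_i$ acts nontrivially on $V_i$.

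\emph{Almost-free action.} Next I verify that the inherited $G$-action on the blow-up $\pi\colon\tilde Y\to Y$ of $Y$ along $B:=\bigsqcup_{i\neq i_0}T_i$ is free outside a subset of codimension $>n$, provided $m$ is sufficiently large. For any $g=(g_i)\in G\setminus\{e\}$ the fixed subspace decomposes as $V^g=\bigoplus_{i\neq i_0}V_i^{g_i}\oplus(W^g)^{\oplus m}$, and faithfulness of $W$ forces $\dim W-\dim W^g\ge 1$; hence $\operatorname{codim}_{\CP(V)}\Fix_{\CP(V)}(g)\ge m$, and a fortiori $\operatorname{codim}_Y\Fix_Y(g)\ge m$. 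This carries over unchanged to $\tilde Y\setminus\pi^{-1}(B)$. On each exceptional divisor $E_i=\CP(N_{T_i/Y})$ the $g$-fixed points lie over $\Fix_{T_i}(g_i)$ and correspond to $g$-invariant lines in the normal bundle; since the $G$-representation $V/V_i$ governing the normal directions of $\CP(V_i)$ in $\CP(V)$ still contains $W^{\oplus m}$ as a subrepresentation, the $g$-fixed subspace of $N_{T_i/Y}|_t$ has codimension at least $m-O(1)$ for every $t$, independently of $g_i$. Taking $m$ large enough in terms of $n$, the $\dim T_i$ and the $\dim V_i$, every component of the non-free locus in $\tilde Y$ therefore has codimension $>n$. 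This codimension estimate is the main technical obstacle of the proof; the crucial point is that the bound must survive even when $g_i=e$ and $T_i$ sits entirely inside $\Fix_Y(g)$, which is exactly the role played by the faithful summand $W^{\oplus m}$.

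\emph{Cohomology.} Lemma \ref{lem:Sommese} then produces an $n$-dimensional smooth complex projective subvariety $X\subset \tilde Y/G$ with $H^k(X,\C)\cong H^k(\tilde Y,\C)^G$ for $k<n$. Combining Theorem \ref{thm:blow-up} with the K\"unneth decomposition of $H^\ast(Y)=H^\ast(T_{i_0})\otimes H^\ast(\CP(V))$, and using that $G$ acts trivially on $H^\ast(\CP(V))$ (its action coming from a linear $G$-representation on $V$), one obtains for every $(p,q)$ with $p+q<n$
\[
H^{p,q}(\tilde Y)^G = \bigoplus_{s\ge 0} H^{p-s,q-s}(T_{i_0})^{G_{i_0}}\otimes H^{s,s}(\CP(V)) \;\oplus\; \bigoplus_{i\neq i_0}\bigoplus_{j=0}^{r_i-2} H^{p-j-1,q-j-1}(T_i)^{G_i},
\]
where $r_i:=\operatorname{codim}_Y T_i$, and where $T_i$-cohomology carries the $G_i$-action since the remaining factors of $G$ act trivially on $T_i$. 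Enlarging $m$ further so that $r_i>n$ for every $i\neq i_0$ (this is compatible with the earlier requirement), a straightforward telescoping computation of $l^{p,q}(X)=h^{p,q}(X)-h^{p-1,q-1}(X)$ collapses the first summand to $\dim H^{p,q}(T_{i_0})^{G_{i_0}}$ and each blow-up contribution to $\dim H^{p-1,q-1}(T_i)^{G_i}$, yielding the formula in the statement.
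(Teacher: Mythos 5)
Your construction is essentially the paper's: the paper takes $V=(\bigoplus_{i\in I}V_i)\oplus(\bigoplus_{g\in G}g\cdot\C^k)$, i.e.\ many copies of the regular representation of $G$, embeds the $T_i$ into $Y=T_{i_0}\times\CP(V)$ through the base point $p_{i_0}$, checks freeness on the exceptional divisors via the normal spaces $T_{T_{i_0},p_{i_0}}\oplus\bigl(L^\ast\otimes(V/L)\bigr)$, and then combines Lemma \ref{lem:Sommese}, Theorem \ref{thm:blow-up} and K\"unneth exactly as you do; your telescoping of the primitive Hodge numbers is the same computation.

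One step is justified incorrectly, though your own parenthetical choice of $W$ repairs it. Fixed points of $g$ on $\CP(V)$ (and on the fibres $\CP(\mathcal N_{T_i/Y,t})$ of the exceptional divisors) are \emph{eigenlines}, not fixed vectors, so the estimate you need is that \emph{every} eigenspace of every $g\neq e$ on $W$ has codimension $\geq 1$ in $W$ --- not merely that $\dim W-\dim W^{g}\geq 1$, which is all that faithfulness gives. For example, $G=\Z/3\Z$ acting on $W=\C$ by a primitive cube root of unity is faithful, yet a generator acts trivially on $\CP(W^{\oplus m})$, so the fixed locus of $g$ in $\CP(V)$ has bounded codimension no matter how large $m$ is; likewise, on $L^\ast\otimes(V/L)$ the element $g$ acts by $\lambda^{-1}g$ where $\lambda$ is its eigenvalue on $L$, so when $g_i\neq e$ the relevant subspace is the $\lambda$-eigenspace of $g$ on $V/L$, not the invariants. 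The stronger property does hold for your suggested $W=\bigoplus_i\C[G_i]$ (a nontrivial $g_i$ permutes the basis of $\C[G_i]$ freely, so each of its eigenspaces there has codimension $\geq |G_i|/2$), which is in substance the paper's choice; with that replacement for ``any faithful $W$'' your argument is complete.
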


\begin{proof}
The product 
\[
G:=\prod_{i\in I}G_i
\]
acts naturally on the direct sum $\bigoplus_{i\in I}V_i$.
We pick some $k>>0$.
Then
\[
V:=(\bigoplus_{i\in I}V_i)\oplus  (\bigoplus_{g\in G}g\cdot \C^k)
\]
inherits a linear $G$-action where $h\in G$ acts on the second factor by sending $g\cdot \C^k$ canonically to $(h\cdot g) \cdot \C^k$. 
Then we obtain $G$-equivariant inclusions
\[
T_i\hookrightarrow \mathbb P(V_i) \hookrightarrow \mathbb P(V) \ ,
\]
where for $j\neq i$, the group $G_j$ acts via the identity on $T_i$ and $\mathbb P(V_i)$.
The product
\[
Y:=T_{i_0}\times \mathbb P(V) 
\]
inherits a $G$-action via the diagonal, where for $i\neq i_0$ elements of $G_i$ act trivially on $T_{i_0}$.

Using the base point $p_{i_0}\in T_{i_0}$, we obtain for all $i\in I$ disjoint inclusions
\[
T_i\hookrightarrow Y \ ,
\]
and we denote the blow-up of $Y$ along the union of all $T_i$ with $i\neq i_0$ by $\tilde Y$. 
Since $p_{i_0}\in T_{i_0}$ is fixed by $G$, the $G$-action maps each $T_i$ to itself and hence lifts to $\tilde Y$.

We want to prove that the $G$-action on $\tilde Y$ is free outside a subset of codimension $>n$.
For $k$ large enough, the $G$-action on $Y$ certainly has this property.
Hence, it suffices to check that the induced $G$-action on the exceptional divisor $E_j$ above $T_j\subseteq Y$ is free outside a subset of codimension $>n$.

For $|I|=1$, this condition is empty.
For $|I|\geq2$, we fix an index $j\in I$ with $j\neq i_0$.
Then it suffices to show that for a given nontrivial element $\phi\in G$ the fixed point set $\Fix_{E_j}(\phi)$ has codimension $>n$ in $E_j$.
If $t_j\in T_j$ is not fixed by $\phi$, then the fiber of $E_j\to T_j$ above $t_j$ is moved by $\phi$ and hence disjoint from $\Fix_{E_j}(\phi)$.
Conversely, if $t_j$ is fixed by $\phi$, then $\phi$ acts on the normal space 
\[
\mathcal N_{T_j,t_j}=T_{Y,t_j}/T_{T_j,t_j} 
\]
via a linear automorphism and the projectivization of this vector space is the fiber of $E_j\to T_j$ above $t_j$.
The tangent space $T_{Y,t_j}$ equals
\[
T_{T_{i_0},p_{i_0}}\oplus \left(L^\ast\otimes \left(V/L\right)\right) \ ,
\]
where $L$ is the line in $V$ which corresponds to the image of $t_j$ under the projection $Y\to \mathbb P(V)$.
Since $\phi\neq \id$, it follows for large $k$ that the fixed point set of $\phi$ on the fiber of $E_j$ above $t_j$ has codimension $>n$.
Hence, $\Fix_{E_j}(\phi)$ has codimension $>n$ in $E_j$, as we want. 

As we have just shown, the $G$-action on $\tilde Y$ is free outside a subset of codimension $>n$.
Hence, by Lemma \ref{lem:Sommese}, the quotient $\tilde Y/G$ contains an $n$-dimensional smooth complex projective subvariety $X$ whose cohomology below the middle degree is given by the $G$-invariants of $\tilde Y$.
In order to calculate the dimension of the latter, we first note that for all $i\in I$, the divisor $E_i$ on $\tilde Y$ is preserved by $G$. 
Since $\mathcal O_{E_i}(-1)$ is given by the restriction of $\mathcal O_{\tilde Y}(E_i)$ to $E_i$, it follows that $c_1(\mathcal O_{E_i}(1))$ is $G$-invariant.
For $p+q<n$, the primitive $(p,q)$-th Hodge number of $X$ is by Theorem \ref{thm:blow-up} therefore given by:
\[
l^{p,q}(X)= \dim(H^{p,q}(Y)^G)-\dim(H^{p-1,q-1}(Y)^G)+\sum_{i\neq i_0} \dim \left(H^{p-1,q-1}(T_i)^{G_i}\right) \ ,
\]
where $H^\ast(-)^G$ denotes $G$-invariant cohomology.
Since any automorphism of projective space acts trivially on its cohomology, the Künneth Formula implies 
\[
 \dim(H^{p,q}(Y)^G)-\dim(H^{p-1,q-1}(Y)^G)=\dim\left(H^{p,q}(T_{i_0})^{G_{i_0}}\right) \ .
\]
This finishes the proof of Proposition \ref{prop:constrmethod}.
\end{proof}

\section{Proof of Theorem \ref{thm:main2}} \label{sec:main2}
Fix $k\geq 1$ and let $(h^{p,q})_{p+q=k}$ be a symmetric collection of natural numbers. 
In the case where $k=2m$ is even, we additionally assume
\[
h^{m,m}\geq m\cdot \left(m-\left\lfloor \frac{m}{2}\right\rfloor+1\right)+\left\lfloor \frac{m}{2}\right\rfloor^2 \ .
\]
Then we want to construct for $n>k$ an $n$-dimensional smooth complex projective variety $X$ with the above Hodge numbers on $H^{k}(X,\C)$.

Let us consider the index set $I:=\left\{0,\ldots ,\left\lfloor (k-1)/2\right\rfloor\right\}$ and put $i_0:=0$.
Then, for all $i\in I$, we consider the $(k-2i)$-fold product
\[
T_i:=\left(C_{h^{k-i,i}}\right)^{k-2i} \ ,
\]
where $C_{h^{k-i,i}}$ denotes the hyperelliptic curve of genus $h^{k-i,i}$, defined in Section \ref{subsec:hyperell}. 
On $T_i$ we consider the action of
\[
G_i:=G(k-2i\:,0\:,\:h^{k-i,i}) \ , 
\]
defined in Section \ref{subsec:groupaction}.

By Lemma \ref{lem:extendedaction}, we may apply the construction method of Section \ref{subsec:keyconstr} to the set of data $(T_i,G_i,I,i_0)$.
Thus, by Proposition \ref{prop:constrmethod}, there exists an $n$-dimensional smooth complex projective variety $X$ whose primitive Hodge numbers are for $p+q<n$ given by
\[
l^{p,q}(X)=\dim\left(H^{p,q}(T_{i_0})^{G_{i_0}}\right)+\sum_{i\neq i_0} \dim \left(H^{p-1,q-1}(T_i)^{G_i}\right) \ .
\]
Lemma \ref{lem:groupaction} says that for $p>q$, the only $G_i$-invariant $(p,q)$-classes on $T_i$ are of type $(k-2i,0)$. 
Therefore, $l^{p,q}(X)$ vanishes for $p>q$ and $p+q<n$ in all but the following cases:
\[
l^{k,0}(X)=\dim\left(H^{k,0}(T_{i_0})^{G_{i_0}}\right)=h^{k,0} \ ,
\]
and
\[
l^{k-2i+1,1}(X)=\dim\left(H^{k-2i,0}(T_{i})^{G_{i}}\right)=h^{k-i,i} \ ,
\]
for all $1\leq i<k/2$.
Using the formula 
\[
h^{k-i,i}(X)=\sum_{s=0}^i l^{k-i-s,i-s}(X) \ ,
\]
we deduce for $0\leq i<k/2$:
\[
h^{k-i,i}(X)=h^{k-i,i} \ .
\]
Thus, if $k$ is odd, then the Hodge symmetries imply that the Hodge structure on $H^k(X,\C)$ has Hodge numbers $(h^{k,0},\ldots ,h^{0,k})$.

We are left with the case where $k=2m$ is even.
Since blowing-up a point increases $h^{m,m}$ by one and leaves $h^{p,q}$ with $p\neq q$ unchanged, it suffices to prove
\[
h^{m,m}(X)=m\cdot \left(m-\left\lfloor \frac{m}{2}\right\rfloor+1\right)+\left\lfloor \frac{m}{2}\right\rfloor^2 \ .
\]
As we have seen:
\[
h^{m,m}(X)=\sum_{s=0}^{m}l^{s,s}(X)=\sum_{s=0}^{m}\left(\dim \left(H^{s,s}(T_0)^{G_0}\right)+\sum_{0<i<k/2}\dim \left(H^{s-1,s-1}(T_i)^{G_i}\right)\right) \ .
\]
By Lemma \ref{lem:groupaction}, we have $\dim \left(H^{s,s}(T_i)^{G_i}\right)=1$ for all $0\leq s \leq 2\cdot \dim (T_i)$ and so
\[
h^{m,m}(X)=m+1+\sum_{s=0}^{m-1}\sum_{0<i<k/2}\dim \left(H^{s,s}(T_i)^{G_i}\right) \ .
\]
Since $T_i$ has dimension $2(m-i)$, we see that
\[
\sum_{s=0}^{m-1}\dim \left(H^{s,s}(T_i)^{G_i}\right)=
\begin{cases}
  m\ ,  & \text{if }2(m-i)> m-1\ ,\\
  2(m-i)+1 \ , & \text{if }2(m-i)\leq m-1\ .
\end{cases}
\]
Hence
\[
h^{m,m}(X)	=m+1 +\sum_{i=1}^{\left\lfloor m/2\right\rfloor} m+\sum_{i=\left\lfloor m/2\right\rfloor+1}^{m-1}(2(m-i)+1)\ ,
\]
and it is straightforward to check that this simplifies to
\begin{align*}
h^{m,m}(X)		=m\cdot \left\lfloor (m+3)/2\right\rfloor+\left\lfloor m/2\right\rfloor^2 \ .
\end{align*}
This finishes the proof of Theorem \ref{thm:main2}.

In Theorem \ref{thm:main2} we have only dealt with Hodge structures below the middle degree.
Under stronger assumptions, the following corollary of Theorem \ref{thm:main2} deals with Hodge structures in the middle degree.
We will use this corollary in the proof of Theorem \ref{thm:domination} in Section \ref{sec:domination}.

\begin{corollary} \label{cor:main2}
Let $(h^{n,0},\ldots ,h^{0,n})$ be a symmetric collection of even natural numbers such that $h^{n,0}=0$.
If $n=2m$ is even, then we additionally assume
	\[
	h^{m,m}\geq 2\cdot (m-1)\cdot \left\lfloor (m+2)/2\right\rfloor+2\cdot \left\lfloor (m-1)/2\right\rfloor ^2 \ .
	\]
Then there exists an $n$-dimensional smooth complex projective variety $X$ whose Hodge structure of weight $n$ realizes the given Hodge numbers.
\end{corollary}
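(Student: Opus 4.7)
The plan is to deduce the corollary from Theorem~\ref{thm:main2} applied in weight $n-2$, by means of the elementary product $X = Y\times \CP^1$. The key observation is that when $Y$ has dimension exactly $n-1$, multiplying by $\CP^1$ contributes twice to each weight~$n$ Hodge number of $X$: once directly via K\"unneth, and once via Serre duality on $Y$ (which relates weight $n$ of $Y$ to weight $n-2$). This doubling is what accommodates the hypothesis that the $h^{p,q}$ be even.

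First I would introduce the halved symmetric collection $\tilde h^{p,q}:=h^{p+1,q+1}/2$ indexed by $p+q=n-2$. Evenness of the $h^{p,q}$ makes these integers, and symmetry is inherited from $(h^{p,q})_{p+q=n}$. When $n=2m$ is even, the hypothesized lower bound on $h^{m,m}$ is tailored so that $\tilde h^{m-1,m-1}$ meets the bound required by Theorem~\ref{thm:main2} for weight $k=2(m-1)$; this is essentially a tautology from how the corollary's bound is stated. I would then invoke Theorem~\ref{thm:main2} (realizing weight $n-2$ in the minimal admissible dimension $n-1$) to produce a smooth complex projective $(n-1)$-fold $Y$ with $h^{p,q}(Y)=\tilde h^{p,q}$ for $p+q=n-2$.

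Finally I would set $X := Y\times \CP^1$, a smooth complex projective variety of dimension $n$. By the K\"unneth formula,
\[
h^{p,q}(X)=h^{p,q}(Y)+h^{p-1,q-1}(Y).
\]
For $p+q=n$, the second summand equals $\tilde h^{p-1,q-1}=h^{p,q}/2$ by construction. The first summand is weight~$n$ cohomology on an $(n-1)$-fold; Serre duality identifies it with $h^{n-1-p,n-1-q}(Y)=\tilde h^{n-1-p,n-1-q}=h^{n-p,n-q}/2$, which by Hodge symmetry of the given collection again equals $h^{p,q}/2$. Summing yields $h^{p,q}(X)=h^{p,q}$, and the boundary case $h^{n,0}(X)=0$ is automatic since $\dim Y=n-1$ forces $h^{n,0}(Y)=0$. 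The only real bookkeeping is the matching of the $h^{m,m}$-bound between the corollary and Theorem~\ref{thm:main2}, and this is precisely how the corollary's bound is designed. The small cases $n\leq 2$, for which Theorem~\ref{thm:main2} is not directly available, are handled by hand via blow-ups of $\CP^1\times \CP^1$ at an appropriate number of points.
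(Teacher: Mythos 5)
Your proposal is correct and follows essentially the same route as the paper: apply Theorem~\ref{thm:main2} in weight $n-2$ to produce an $(n-1)$-fold $Y$ realizing the halved Hodge numbers, then take $X=Y\times\CP^1$ and combine the K\"unneth formula with Serre duality on $Y$ (you make the latter step more explicit than the paper does, and handle the small cases with blow-ups of $\CP^1\times\CP^1$ rather than of $\CP^2$, but these are cosmetic differences).
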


\begin{proof}
For $n=1$ we may put $X=\CP^1$ and for $n=2$ the blow-up of $\CP^2$ in $h^{1,1}-1$ points does the job.
It remains to deal with $n\geq 3$. 
Here, by Theorem \ref{thm:main2} there exists an $(n-1)$-dimensional smooth complex projective variety $Y$ whose Hodge decomposition on $H^{n-2}(Y,\C)$ has Hodge numbers 
\[
(\frac{1}{2}\cdot h^{n-1,1},\ldots , \frac{1}{2}\cdot h^{1,n-1}) \ .
\]
By the Künneth Formula, the product 
$
X:=Y\times \CP^1
$ 
has Hodge numbers
\[
h^{p,q}(X)=h^{p,q}(Y)+h^{p-1,q-1}(Y) \ .
\]
Using the Hodge symmetries on $Y$, Corollary \ref{cor:main2} follows.
\end{proof}

\section{Proof of Theorem \ref{thm:main1}} \label{sec:main1}
In this section we prove Theorem \ref{thm:main1}, stated in the Introduction.
Our proof will follow the same lines as the proof of Theorem \ref{thm:main2} in Section \ref{sec:main2}.

Given a truncated $n$-dimensional formal Hodge diamond whose Hodge numbers (resp.\ primitive Hodge numbers) are denoted by $h^{p,q}$ (resp.\ $l^{p,q}$).
Suppose that one of the following two additional conditions holds:
\begin{enumerate}
	\item The number $h^{k,0}$ vanishes for all $k\neq k_0$ for some $k_0\in\left\{1,\ldots ,n-1\right\}$. \label{item1:proof:main1}
	\item The number $h^{k,0}$ vanishes for all $k=1,\ldots , n-3$. \label{item2:proof:main1}
\end{enumerate}
We will construct universal constants $C(p,n)$ such that under the additional assumption $l^{p,p}\geq C(p,n)$ for all $1\leq p < n/2$, an $n$-dimensional smooth complex projective variety $X$ with the given truncated Hodge diamond exists.
Then Theorem \ref{thm:main1} follows as soon as we have shown $C(p,n)\leq p\cdot (n^2-2n+5)/4 $.

Since blowing-up a point on $X$ increases the primitive Hodge number $l^{1,1}(X)$ by one and leaves the remaining primitive Hodge numbers unchanged, it suffices to deal with the case where $l^{1,1}=C(1,n)$ is minimal.

To explain our construction, let us for each $r\geq s>0$ with $2<r+s<n$ consider the $(r+s-2)$-fold product
\[
T_{r,s}:=\left({C_{l^{r,s}}}\right)^{r+s-2} \ ,
\]
where $C_{l^{r,s}}$ is the hyperelliptic curve of genus $l^{r,s}$, constructed in Section \ref{subsec:hyperell}.
On $T_{r,s}$ we consider the group action of
\[
G_{r,s}:=G(r-1,s-1,l^{r,s}) \ ,
\]
defined in Section \ref{subsec:groupaction}.

At this point we need to distinguish between the above cases (\ref{item1:proof:main1}) and (\ref{item2:proof:main1}).
We begin with (\ref{item1:proof:main1}) and consider the index set
\[
I:=\left\{(r,s):\ r\geq s >0,\ \ n>r+s>2\right\}\cup\left\{i_0\right\} \ ,
\]
and put
\[
T_{i_0}:=\left({C_{l^{k_0,0}}}\right)^{k_0}\ \ \ \text{and}\ \ \ G_{i_0}:=G(k_0,0,l^{k_0,0}) \ .
\]
By Lemma \ref{lem:extendedaction}, we may apply the construction method of Section \ref{subsec:keyconstr} to the set of data $(T_i,G_i,I,i_0)$.
Thus, Proposition \ref{prop:constrmethod} yields an $n$-dimensional smooth complex projective variety $X$ whose primitive Hodge numbers $l^{p,q}(X)$ with $p+q<n$ are given by
\begin{equation} \label{eq1:main1}
l^{p,q}(X)=\dim\left(H^{p,q}(T_{i_0})^{G_{i_0}}\right)+\sum_{(r,s)\in I\setminus \left\{i_0\right\}}\dim\left(H^{p-1,q-1}(T_{r,s})^{G_{r,s}}\right) \ .
\end{equation}
If $p>q$, then Lemmas \ref{lem:groupaction} and \ref{lem:groupactiona=b} say that 
\begin{align} \label{eq:h^{p-1,q-1}(T_{r,s})}
\dim \left(H^{p-1,q-1}(T_{r,s})^{G_{r,s}}\right) =
	\begin{cases}
	0		&\text{if $(r,s)\neq (p,q)$} \ , \\
 	l^{p,q} &\text{if $(r,s)=(p,q)$} \ .
 	\end{cases}
\end{align}
Moreover,
\begin{align} \label{eq:h^{p-1,q-1}(T_{i_0})}
\dim \left(H^{p,q}(T_{i_0})^{G_{i_0}}\right) =
	\begin{cases}
	0		&\text{if $(k_0,0)\neq (p,q)$} \ , \\
 	l^{p,q} &\text{if $(k_0,0)=(p,q)$} \ .
 	\end{cases}
\end{align}
In (\ref{eq1:main1}), the summation condition $(r,s)\in I\setminus \left\{i_0\right\}$ means $r\geq s>0$ and $n>r+s>2$.
It therefore follows from (\ref{eq:h^{p-1,q-1}(T_{r,s})}) and (\ref{eq:h^{p-1,q-1}(T_{i_0})}) that $l^{p,q}(X)=l^{p,q}$ holds for all $p> q$ with $p+q<n$.
By the Hodge symmetries on $X$, $l^{p,q}(X)=l^{p,q}$ then follows for all $p\neq q$ with $p+q<n$.

Next, for $p=q$, one extracts from (\ref{eq1:main1}) an explicit formula of the form 
\[
l^{p,p}(X)=l^{p,p}+C_1(p,n) \ ,
\]
where $C_1(p,n)$ is a constant which only depends on $p$ and $n$.
Replacing $l^{p,p}$ by $l^{p,p}-C_1(p,n)$ in the above argument then shows that in case (\ref{item1:proof:main1}), an $n$-dimensional smooth complex projective variety with the given truncated Hodge diamond exists as long as 
\[
l^{p,p}\geq C_1(p,n)
\] 
holds for all $1\leq p < n/2$.

In order to find a rough estimation for $C_1(p,n)$, we deduce from Lemmas \ref{lem:groupaction} and \ref{lem:groupactiona=b} the following inequalities
\[
\dim \left(H^{p,p}(T_{i_0})^{G_{i_0}}\right)\leq1 \ \ \ \text{for all $p$} \ ,
\]
and 
\[
\dim \left(H^{p-1,p-1}(T_{r,s})^{G_{r,s}}\right) \leq
	\begin{cases}
	p		&\text{if $(r,s)\neq (p,p)$} \ , \\
 	p+l^{p,p} &\text{if $(r,s)=(p,p)$} \ .
 	\end{cases}
\]
Using these estimates, (\ref{eq1:main1}) gives
\begin{align} \label{eq:C_1}
C_1(p,n)\leq 1+\sum_{\substack{r\geq s>0 \\ n>r+s>2}} p \ ,
\end{align}
where we used that $(r,s)\in I\setminus\left\{i_0\right\}$ is equivalent to $r\geq s>0$ and $n>r+s>2$.
If we write $\left\lfloor x\right\rfloor$ for the floor function of $x$, then (\ref{eq:C_1}) gives explicitly:
\[
C_1(p,n)\leq p\cdot n\cdot \left\lfloor \frac{n-1}{2}\right\rfloor -p\cdot \left\lfloor \frac{n-1}{2} \right\rfloor \cdot \left( \left\lfloor  \frac{n-1}{2} \right\rfloor +1\right)  \ .
\]
If $n$ is odd, then the above right-hand-side equals $p\cdot(n-1)^2/4$ and if $n$ is even, then it is given by $p\cdot n(n-2)/4$.
Hence,
\[
C_1(p,n)\leq p\cdot (n-1)^2/4 \ .
\]

Let us now turn to case (\ref{item2:proof:main1}).
Here we  consider the same index set $I$ as above, and for all $i\neq i_0$ we also define $T_i$ and $G_i$ as above. 
However, for $i=i_0$, we put
\[
T_{i_0}:=\left(C_{l^{n-1,0}}\right)^{n-1}\times\left(C_{l^{n-2,0}}\right)^{n-2}
\]
and
\[
G_{i_0}:=G(n-1,0,l^{n-1,0})\times G(n-2,0,l^{n-2,0}) \ .
\]
By Lemma \ref{lem:extendedaction}, there exist integers $N_1$ and $N_2$ such that $G_{i_0}$ admits an embedding into $GL(N_1+1)\times GL(N_2+1)$ in such a way that an $G_{i_0}$-equivariant embedding of $T_{i_0}$ into $\CP^{N_1}\times \CP^{N_2}$ exists.
Using the Segre map, we obtain for $N>0$ an embedding of $G_{i_0}$ into $\GL(N+1)$ and an $G_{i_0}$-equivariant embedding of $T_{i_0}$ into $\CP^N$.
Moreover, by Lemma \ref{lem:extendedaction}, $T_{i_0}$ contains a point $p_{i_0}$ which is fixed by $G_{i_0}$.
Hence, the construction method of Section \ref{subsec:keyconstr} can be applied to the above set of data.
Therefore, Proposition \ref{prop:constrmethod} yields an $n$-dimensional smooth complex projective variety $X$ whose primitive Hodge numbers $l^{p,q}(X)$ are given by formula (\ref{eq1:main1}).

For $p>q$ and $p+q<n$, the $G_{i_0}$-invariant cohomology of $T_{i_0}$ is trivial whenever $(p,q)$ is different from $(n-2,0)$ and $(n-1,0)$.
Moreover, for $(p,q)= (n-1,0)$ it has dimension $l^{n-1,0}$ and for $(p,q)= (n-2,0)$ its dimension equals $l^{n-2,0}$.
Thus, (\ref{eq1:main1}) and the Hodge symmetries on $X$ yield $l^{p,q}(X)=l^{p,q}$ for all $p\neq q$ with $p+q<n$.
Moreover, as in case (\ref{item1:proof:main1}), we obtain 
\[
l^{p,p}(X)=l^{p,p}+C_2(p,n) \ ,
\]
where $C_2(p,n)$ is a constant in $p$ and $n$ which can be estimated by
\[
C_2(p,n)\leq p+1+\sum_{\substack{r\geq s>0 \\ n>r+s>2}} p \ ,
\]
where we used that $H^{p,p}(T_{i_0})^{G_{i_0}}$ has dimension $p+1$.
Our estimation for $C_1(p,n)$ shows
\[
C_2(p,n)\leq p\cdot (n-1)^2/4 +p \ .
\]
Then, for $l^{p,p}\geq C_2(p,n)$, we may replace $l^{p,p}$ by $l^{p,p}-C_2(p,n)$ in the above argument and obtain an $n$-dimensional smooth complex projective variety with the given truncated Hodge diamond.

Let us now define
\begin{equation} \label{eq:C(p,n)}
C(p,n):=\max\left(C_1(p,n),C_2(p,n)\right) \ .
\end{equation}
Then in both cases, (\ref{item1:proof:main1}) and (\ref{item2:proof:main1}), a variety with the desired truncated Hodge diamond exists if $l^{p,p}\geq C(p,n)$.
Moreover, $C(p,n)$ can roughly be estimated by
\[
C(p,n)\leq p\cdot \frac{n^2-2n+5}{4} \ .
\]
This finishes the proof of Theorem \ref{thm:main1}.

\begin{remark}
As we have seen in the above proof, we may replace the given lower bound on $l^{p,p}$ in assumption (\ref{cond1:thm:main1}) of Theorem \ref{thm:main1} by the smaller constant $C(p,n)$, defined in (\ref{eq:C(p,n)}).
\end{remark}

\section{Special weight $2$ Hodge structures} \label{sec:deg2coho}
In this section we show that for weight two Hodge structures, the lower bound $h^{1,1}\geq 2$ in Theorem \ref{thm:main2} can be replaced by the optimal lower bound $h^{1,1}\geq 1$.
Our proof uses an ad hoc implementation of the Godeaux-Serre construction.
The examples we construct here compare nicely to the results in Appendices \ref{sec:3-folds} and \ref{sec:4-folds}.
However, since the methods of this section are not used elsewhere in the paper, the reader can easily skip this section.

\begin{theorem} \label{thm:main3}
Let $h^{2,0}$ and $h^{1,1}$ be natural numbers with $h^{1,1}\geq 1$.
Then in each dimension $\geq 3$ there exists a smooth complex projective variety $X$ with 
\[
h^{2,0}(X)=h^{2,0}\ \ \ \text{and}\ \ \ h^{1,1}(X)=h^{1,1}\ .
\]
\end{theorem}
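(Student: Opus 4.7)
The plan is a reduction plus an ad hoc Godeaux--Serre construction.

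First, blowing up a point on a smooth projective $n$-fold with $n\geq 3$ raises $h^{1,1}$ by one and leaves $h^{2,0}$ unchanged. Hence it suffices to realize, for each natural number $h^{2,0}$ and each $n\geq 3$, the Hodge pair $(h^{2,0},1)$. For $h^{2,0}=0$ one takes $X=\CP^n$, so fix $g:=h^{2,0}\geq 1$.

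The strategy is to apply Lemma \ref{lem:Sommese} to a pair $(Y,G)$ consisting of a smooth projective variety with a faithful $G$-action that is free outside a subset of codimension $>n$, satisfying $\dim H^{2,0}(Y)^G=g$ and $\dim H^{1,1}(Y)^G=1$. The lemma then produces a smooth $n$-dimensional $X\subseteq Y/G$ whose cohomology below the middle degree agrees with the $G$-invariants of $H^\ast(Y)$, so in particular $h^{2,0}(X)=g$ and $h^{1,1}(X)=1$. The $(2,0)$-classes will come from the hyperelliptic curve $C_g$ with its automorphism $\psi_g$ from Section \ref{subsec:hyperell}: setting $T:=C_g^2$ and $G:=G(2,0,g)$, Lemma \ref{lem:groupaction} supplies $\dim H^{2,0}(T)^G=g$ and $\dim H^{1,1}(T)^G=1$, with vanishing invariant $(1,0)$-classes.

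The hard part is matching $\dim H^{1,1}(Y)^G=1$. The naive ambient $Y=T\times\CP^N$ from Proposition \ref{prop:constrmethod} fails, because by K\"unneth $\dim H^{1,1}(Y)^G=2$: one contribution from the fundamental class of $T$ and one from the hyperplane class of $\CP^N$. The ad hoc fix is to replace the product by a geometry in which these two $(1,1)$-classes become proportional. A natural candidate is to take $Y$ as a $G$-equivariant cyclic cover of, or a sufficiently generic $G$-equivariant smooth complete intersection inside, a $G$-equivariant projective space containing $T$ (available by Lemma \ref{lem:extendedaction}), chosen so that a Noether--Lefschetz/Lefschetz hyperplane style argument forces $\operatorname{Pic}(Y)\otimes\Q=\Q$ while an equivariant copy of $T$ is retained inside $Y$, yielding the full $g$-dimensional space of invariant $(2,0)$-classes. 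Applying Lemma \ref{lem:Sommese} to this $(Y,G)$ then produces the desired $X$. The technical core of the proof is simultaneously pinning the Picard rank of $Y$ at one and verifying that the $g$ invariant $(2,0)$-classes coming from $T$ survive in $Y$ --- a piece of cohomological bookkeeping specific to whichever cover or complete intersection is chosen.
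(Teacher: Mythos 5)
Your reduction to the case $(h^{2,0},h^{1,1})=(g,1)$ via point blow-ups is correct, and you have correctly identified the obstacle: any ambient variety of the form $T\times\CP^N$ contributes two independent invariant $(1,1)$-classes. But the fix you propose does not work, and the difficulty you defer as ``cohomological bookkeeping'' is in fact an impossibility for the ambient varieties you name. If $Y$ is a smooth complete intersection of dimension $\geq 3$ in a projective space (or a cyclic cover of such branched along a smooth divisor), then the Lefschetz hyperplane theorem forces $H^2(Y,\Q)\cong H^2(\CP^N,\Q)$, hence $h^{2,0}(Y)=0$ --- regardless of which surfaces $Y$ contains. Cohomology restricts from the ambient space to a subvariety in low degrees; there is no mechanism by which the $(2,0)$-classes of an embedded surface $T\subseteq Y$ ``survive'' upward into $H^{2,0}(Y)$. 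So your $Y$ cannot simultaneously have $\dim H^{2,0}(Y)^G=g>0$ and be a complete intersection or cyclic cover of dimension $\geq 3$. A secondary issue: pinning $\operatorname{Pic}(Y)\otimes\Q$ at rank one via Noether--Lefschetz does not control $h^{1,1}(Y)$, which also contains transcendental classes (a K3 surface can have Picard number $1$ and $h^{1,1}=20$); what Lemma \ref{lem:Sommese} needs is $\dim H^{1,1}(Y)^G=1$, not invariant Picard rank one.

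The paper's actual construction avoids any projective-space factor in the ambient variety. It takes $A:=C_g^{2N_1N_2}$, viewed as $N_1$ copies of $T^{N_2}$ with $T=C_g^2$, and lets $G$ be generated by five subgroups: the cyclic groups $G^1(2,0,g)$ acting diagonally in two different ways (once per block $A_j$, once across blocks), the involution-type group $G^2(2,0,g)$, and two symmetric groups $\Sym(N_1)$ and $\Sym(N_2)$ permuting the blocks and the $T$-factors within each block. For $N_1,N_2$ large this action is free outside a subset of codimension $>n$ (this is where the permutation layers earn their keep), so Lemma \ref{lem:Sommese} applies directly to $A$ with no auxiliary $\CP(V)$. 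The transitive permutation action on the $2N_1N_2$ curve factors identifies all their fundamental classes into a single invariant $(1,1)$-class, giving $h^{1,1}(X)=1$, while the invariant $(2,0)$-classes are exactly the span of $\sum_{i}\omega_{2i-1\,l}\wedge\omega_{2i\,l}$ for $l=1,\dots,g$. You would need to replace your ambient $Y$ by something of this kind --- a variety with large $h^{2,0}$ and a group action collapsing $H^{1,1}$ to one dimension --- rather than a complete intersection or cover, for the argument to go through.
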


\begin{proof}
Since blowing-up a point increases $h^{1,1}$ by one and leaves $h^{2,0}$ unchanged, in order to prove Theorem \ref{thm:main3}, it suffices to construct for given $g$ in each dimension $n>2$ a smooth complex projective variety $X$ with $h^{2,0}(X)=g$ and $h^{1,1}(X)=1$.

We fix some large integers $N_1$ and $N_2$ and consider $T:={C_g}^2$ together with the subgroups $G^1(2,0,g)$ and $G^2(2,0,g)$ of $\Aut(T)$, defined in Section \ref{subsec:groupaction}.
For $j=1,\ldots , N_1$, we denote a copy of $T^{N_2}$ by $A_j$ and we put
\[
A:=A_1\times \cdots \times A_{N_1}  \ .
\]
That is, $A$ is a $(2\cdot N_1\cdot N_2)$-fold product of $C_g$, but we prefer to think of $A$ to be an $N_1$-fold product of $T^{N_2}$, where the $j$-th factor is denoted by $A_j$.

Next, we explain the construction of a certain subgroup $G$ of automorphisms of $A$.
This group is generated by five finite subgroups $G_1,\ldots ,G_5$ in $\Aut(A)$. 
The first subgroup of $\Aut(A)$ is given by 
\[
G_1:=G^1(2,0,g)^{\times N_1} \ ,
\] 
where $G^1(2,0,g)$ acts on each $A_j$ via the diagonal action.
The second one is 
\[
G_2:=G^1(2,0,g)^{\times N_2} \ ,
\]
acting on $A$ via the diagonal action.
The third one is given by
\[
G_3:=G^2(2,0,g) \ ,
\]
acting on each $A_j$ as well as on $A$ via the diagonal action.
The fourth group of automorphisms of $A$ equals
\[
G_4:=\Sym(N_1) \ ,
\]
which acts on $A$ via permutation of the $A_j$'s.
Finally, we put
\[
G_5:=\Sym(N_2) \ ,
\]
which permutes the $T$-factors of each $A_j$ and acts on $A$ via the diagonal action.

Suppose we are given some elements $\phi_i\in G_i$.
Then, $\phi_3$ commutes with $\phi_4$ and $\phi_5$, and $\phi_3\circ\phi_1=\phi_1^\prime\circ \phi_3$, respectively $\phi_1\circ\phi_3=\phi_3\circ \phi_1^{\prime\prime}$ as well as $\phi_3\circ\phi_2=\phi_2^\prime\circ \phi_3$, respectively $\phi_2\circ\phi_3=\phi_3\circ \phi_2^{\prime\prime}$ holds for some $\phi_i^\prime,\phi_i^{\prime \prime}\in G_i$, where $i=1,2$.
Similar relations can be checked for all products $\phi_i\circ \phi_j$ and so we conclude that each element $\phi$ in the group $G\subseteq \Aut(A)$, which is generated by $G_1,\ldots ,G_5$, can be written in the form
\[
\phi=\phi_1\circ \phi_2 \circ \phi_3 \circ \phi_4\circ \phi_5 \ ,
\]
where $\phi_i$ lies in $G_i$.

Suppose that the fixed point set $\Fix_A(\phi)$ contains an irreducible component whose codimension is less than 
\[
\min\left(N_1/2,2N_2\right) \ .
\]
Since $\phi$ is just some permutation of the $2 N_1  N_2$ curve factors of $A$, followed by automorphisms of each factor, we deduce that $\phi$ needs to fix more than 
\[
2 N_1  N_2 -\min(N_1,4N_2)
\] 
curve factors.
If $\phi_4$ were nontrivial, then $\phi$ would fix at most $2(N_1-2)N_2$ curve factors, and if $\phi_5$ were nontrivial, then $\phi$ would fix at most $2N_1(N_2-2)$ curve factors.
Thus, $\phi_4=\phi_5=\id$.
If $\phi_3$ were nontrivial, then its action on a single factor $T={C_g}^2$ cannot permute the two curve factors.
Thus, $\phi_3$ is just multiplication with $-1$ on each curve factor.
This cannot be canceled with automorphisms in $G^1(2,0,g)$, since the latter is a cyclic group of order $2g+1$.
Therefore, $\phi_3=\id$ follows as well.

Since $\phi$ fixes more than $2N_1N_2-N_1$ curve factors, we see that $\phi=\phi_1\circ \phi_2$ needs to be the identity on at least one $A_{j_0}$.
Since $\phi_2$ acts on each $A_j$ in the same way, it lies in $G_1\cap G_2$ and so we may assume $\phi_2=\id$.
Finally, any nontrivial automorphism in $G_1$ has a fixed point set of codimension $\geq 2N_2$.
This is a contradiction. 

For $N_1$ and $N_2$ large enough, it follows that the $G$-action on $A$ is free outside a subset of codimension $>n$.
Then, by Lemma \ref{lem:Sommese}, $A/G$ contains a smooth $n$-dimensional subvariety $X$ whose cohomology below degree $n$ is given by the $G$-invariants of $A$.

For the proof of the Theorem, it remains to show $h^{2,0}(X)=g$ and $h^{1,1}(X)=1$.
For this purpose, we denote the fundamental class of the $j$-th curve factor of $A$ by 
\[
\Omega_j\in H^{1,1}(A) \ .
\]
Moreover, we pick for $j=1,\ldots ,2N_1N_2$ a basis $\omega_{j1},\ldots ,\omega_{jg}$ of $(1,0)$-classes of the $j$-th curve factor of $A$ in such a way that 
\[
\psi_g^\ast\omega_{jl}=\zeta^{l}\omega_{jl} \ ,
\]
for a fixed $(2g+1)$-th root of unity $\zeta$ holds.
Then the cohomology ring of $A$ is generated by the $\Omega_j$'s, $\omega_{jl}$'s and their conjugates.

Suppose that we are given a $G$-invariant $(1,1)$-class which contains $\omega_{is}\wedge\overline{\omega_{jr}}$ nontrivially.
Then application of a suitable automorphism in $G_1$ shows that after relabeling $A_1,\ldots ,A_{N_1}$, we may assume $1\leq i,j\leq 2N_2$.
Moreover, it follows that $i$ and $j$ have the same parity, since otherwise $r+s$ is zero modulo $2g+1$, which contradicts $1\leq r,s\leq g$.
Finally, application of a suitable element in $G_2$ shows $i=j$.
Since $\omega_{is}\wedge\overline{\omega_{ir}}$ is a multiple of $\Omega_i$, it follows that our $G$-invariant $(1,1)$-class is 
of the form
\[
\lambda_1\cdot \Omega_1+\cdots +\lambda_{2N_1N_2}\cdot \Omega_{2N_1N_2} \ .
\]
Since $G$ acts transitively on the curve factors of $A$, this class is $G$-invariant if and only if $\lambda_1=\cdots =\lambda_{2N_1N_2}$.
This proves $h^{1,1}(X)=1$.

It remains to show $h^{2,0}(X)=g$.
Therefore, we define for $l=1,\ldots ,g$ the $(2,0)$-class
\[
\omega_l:=\sum_{i=1}^{N_1N_2}\omega_{2i-1\:l}\wedge\omega_{2i\:l}
\]
and claim that these form a basis of the $G$-invariant $(2,0)$-classes of $A$.
Clearly, they are linearly independent and it is easy to see that they are $G$-invariant.

Conversely, suppose that a $G$-invariant class contains $\omega_{il_1}\wedge \omega_{jl_2}$ nontrivially.
Then, application of a suitable element in $G_1$ shows that $l_1\pm l_2$ is zero modulo $2g+1$. 
This implies $l_1=l_2$. 
Therefore, our $G$-invariant $(2,0)$-class is of the form 
\[
\sum_{ijl} \lambda_{ijl}\cdot \omega_{il}\wedge \omega_{jl} \ .
\]
For fixed $l=1,\ldots ,g$, we write $\lambda_{ij}=\lambda_{ijl}$ and note that 
\[
\sum_{ij} \lambda_{ij}\cdot \omega_{il}\wedge \omega_{jl} 
\]
is also $G$-invariant.
We want to show that this class is a multiple of $\omega_l$. 
Applying suitable elements of $G_1$ shows that the above $(2,0)$-class is a sum of $(2,0)$-classes of the factors $A_1,\ldots ,A_{N_1}$.
Since this sum is invariant under the permutation of the factors $A_1,\ldots ,A_{N_1}$, it suffices to consider the class
\[
\sum_{i,j=1}^{2N_2} \lambda_{ij}\cdot \omega_{il}\wedge \omega_{jl}
\]
on $A_1$, which is invariant under the induced $G_2$- and $G_5$-action on $A_1$.
In this sum we may assume $\lambda_{ij}=0$ for all $i\geq j$ and application of a suitable element in $G_2$ shows that the above class is given by
\[
\sum_{i=1}^{N_2}\lambda_{2i-1\:2i}\cdot \omega_{2i-1\:l}\wedge \omega_{2i\:l} \ .
\]
Finally, application of elements of $G_5$ proves that our class is a multiple of 
\[
\sum_{i=1}^{N_2} \omega_{2i-1\:l}\wedge \omega_{2i\:l} \ .
\]
This finishes the proof of $h^{2,0}(X)=g$ and thereby establishes Theorem \ref{thm:main3}.
\end{proof}

\begin{remark}
The above construction does not generalize to higher degrees -- at least not in the obvious way.
\end{remark}

\section{Primitive Hodge numbers away from the vertical middle axis} \label{sec:Z_n}
In this section we produce examples whose primitive Hodge numbers away from the vertical middle axis of the Hodge diamond (\ref{eq:diamond}) are concentrated in a single $(p,q)$-type.
These examples will then be used in the proof of Theorem \ref{thm:domination} in Section \ref{sec:domination}.
Our precise result is as follows:

\begin{theorem} \label{thm:Z_n}
For $a>b\geq 0$, $n\geq a+b$ and $c\geq 1$, there exists an $n$-dimensional smooth complex projective variety whose primitive $(p,q)$-type cohomology has dimension $(3^c-1)/2$ if $p=a$ and $q=b$, and vanishes for all other $p>q$.
\end{theorem}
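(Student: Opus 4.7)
The plan is to induct on $c$ using the Cynk--Hulek-inspired inductive construction of Proposition \ref{prop:ind.constr} (mentioned in Section \ref{sec:Sommese}), together with the group actions on products of hyperelliptic curves developed in Section \ref{sec:hellcurves&groups}. The recursion $g_c := (3^c-1)/2 = 3\, g_{c-1} + 1$ suggests a triple-cover procedure at each step.

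For the base case $c=1$, set $T_1 := C_1^{a+b}$ with $C_1$ an elliptic curve, and let $G_1 := G(a,b,1)$ act as in Section \ref{subsec:groupaction}. Since $a > b$, Lemma \ref{lem:groupaction} shows that the $G_1$-invariant cohomology of $T_1$ consists of a one-dimensional space $V^{a,b}$, its conjugate $V^{b,a}$, and a direct sum of $(p,p)$-classes. The quotient $T_1/G_1$ has only finite quotient singularities, and Proposition \ref{prop:ind.constr} produces a smooth model $X_1$ of dimension $a+b$ whose cohomology below middle degree agrees with the $G_1$-invariants of $T_1$ modulo additional $(p,p)$-classes; in particular $l^{a,b}(X_1) = 1 = g_1$ and the remaining off-diagonal primitive Hodge numbers vanish. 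If $n > a+b$, multiplying by $(\CP^1)^{n-a-b}$ adjusts the dimension, and the K\"unneth formula ensures no new off-diagonal primitive classes are introduced.

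For the inductive step $c-1 \rightsquigarrow c$, assume $X_{c-1}$ has been built with the desired primitive Hodge data. The key geometric input is that $C_{g_c}$, with affine equation $y^2 = x^{3^c} + 1$, carries the order-three automorphism $\psi_{g_c}^{3^{c-1}}\colon x \mapsto \zeta' x$ (with $\zeta'$ a primitive cube root of unity); the substitution $u = x^3$ identifies the quotient of $C_{g_c}$ by this automorphism with $C_{g_{c-1}}\colon y^2 = u^{3^{c-1}} + 1$, and Riemann--Hurwitz with three ramification points of index three, $2g_c - 2 = 3(2g_{c-1}-2) + 3\cdot (3-1)$, reconfirms $g_c = 3 g_{c-1} + 1$. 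Taking $(a+b)$-fold products yields an equivariant cyclic triple cover $T_c := C_{g_c}^{a+b} \to T_{c-1} := C_{g_{c-1}}^{a+b}$, compatibly with an inclusion $G(a,b,g_{c-1}) \hookrightarrow G(a,b,g_c)$. Feeding this tower, together with the previously built $X_{c-1}$, into Proposition \ref{prop:ind.constr} produces a smooth model $X_c$ whose cohomology below middle degree contains the $G(a,b,g_c)$-invariants of $T_c$ modulo additional $(p,p)$-classes. By Lemma \ref{lem:groupaction} this forces $l^{a,b}(X_c) = g_c = (3^c - 1)/2$ and leaves all other off-diagonal primitive Hodge numbers zero.

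The main obstacle is the singularity analysis at each inductive step: one must verify that all fixed-point strata of elements of $G(a,b,g_c)$ on the iteratively blown-up ambient spaces are smooth, and that the exceptional divisors of the required blow-ups contribute only $(p,p)$-classes, as accounted for by Theorem \ref{thm:blow-up} and the ring-structure formula (\ref{eq:ringstr:blow-up}). This bookkeeping is the technical heart of Proposition \ref{prop:ind.constr}; in the present setting it reduces to a careful local analysis of fixed loci on products of hyperelliptic curves and their iterated blow-ups, with the recursive $g_c = 3 g_{c-1} + 1$ matching precisely the triple-cover contribution plus the single new $V^{a,b}$-summand supported on the branch locus.
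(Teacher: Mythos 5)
There is a genuine gap: your argument leans entirely on Proposition \ref{prop:ind.constr}, but that proposition does not do what you ask of it. It takes as input two pairs $(X_1,\phi_1^{-1})\in\mathcal S_c^{a_1,b_1}$ and $(X_2,\phi_2)\in\mathcal S_c^{a_2,b_2}$ --- each a \emph{smooth} variety with an order-$3^c$ automorphism satisfying the five conditions of Section \ref{subsec:ind.constr} --- and produces a smooth model of the quotient of $X_1\times X_2$ by the single cyclic group $\left\langle\phi_1\times\phi_2\right\rangle$, resolved through the chain (\ref{eq:diag:Y_i}) of blow-ups and order-three quotients whose local analysis is Lemma \ref{lem:sing1}. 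It does not accept a singular quotient $T_1/G(a,b,1)$ as in your base case, nor a ``tower of triple covers'' as in your inductive step. In particular, your base case $c=1$ already requires a smooth model of $C_1^{a+b}/G(a,b,1)$ whose cohomology is the $G$-invariant cohomology plus $(p,p)$-classes, for arbitrary $a+b$; this is essentially the full strength of the theorem and is exactly the problem Section \ref{sec:Sommese} warns cannot be solved by passing to an arbitrary smooth model. Moreover $G(a,b,g)$ contains the symmetric-group factors $G^2$ and $G^3$, whose fixed loci produce quotient singularities along diagonals that are not of the linearized $\Z/3\Z$ type handled by Lemma \ref{lem:sing1}; no resolution machinery in the paper applies to them, and indeed the paper's construction deliberately avoids them (the remark after the proof notes that the variety produced is a smooth model of $C_g^{a+b}/G^1(a,b,g)$, using only the abelian subgroup of the first kind).

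The paper's induction also runs on a different parameter: $c$ is fixed once and for all, $g=(3^c-1)/2$ is built in at the very start via the pair $(C_g,\psi_g)\in\mathcal S_c^{1,0}$ with $\psi_g$ of order $3^c$, and the induction is on $a+b$, attaching one curve factor at a time by applying Proposition \ref{prop:ind.constr} to $\mathcal S_c^{a,b-1}$ (or $\mathcal S_c^{a-1,0}$) together with $\mathcal S_c^{0,1}$ (or $\mathcal S_c^{1,0}$); the $g$-dimensional space of $(a,b)$-classes propagates as the span of the products $\omega_{1j}\wedge\omega_{2j}$ of eigenclasses as in (\ref{eq:Zn1}). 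Your observation that $C_{g_c}\to C_{g_{c-1}}$ is a cyclic triple cover with $g_c=3g_{c-1}+1$ is correct and is a nice piece of structure, but it is not used anywhere in the paper's argument and, as set up, cannot be fed into the available machinery. To repair your proof you would either have to reprove a version of Proposition \ref{prop:ind.constr} for the nonabelian group $G(a,b,g)$ (including a resolution of the symmetric-group quotient singularities with controlled cohomology), or abandon the induction on $c$ and follow the induction on $a+b$ with the order-$3^c$ cyclic data carried along from the start.
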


In comparison with Theorem \ref{thm:main1}, the advantage of Theorem \ref{thm:Z_n} is that it also controls the Hodge numbers $h^{p,q}$ with $p\neq q$ and $p+q=n$.
These numbers lie in the horizontal middle row of the Hodge diamond (\ref{eq:diamond}) and so they were excluded in the statement of Theorem \ref{thm:main1}.

Using an iterated resolution of $(\Z/3\Z)$-quotient singularities whose local description is given in Section \ref{subsec:localres}, we explain an inductive construction method in Section \ref{subsec:ind.constr}. 
Using this construction, Theorem \ref{thm:Z_n} will easily follow in Section \ref{subsec:proof:thm:Z_n}.
Our approach is inspired by Cynk--Hulek's construction of rigid Calabi-Yau manifolds \cite{cynk&hulek}. 

\subsection{Local resolution of $\Z/3\Z$-quotient singularities} \label{subsec:localres}
Fix a primitive third root of unity $\xi$ and choose affine coordinates $(x_1,\ldots ,x_n)$ on $\C^n$.
For an open ball $Y\subseteq \C^n$ centered at $0$ and for some $r\geq 0$, we consider the automorphism $\phi:Y \to Y$ given by
\[
(x_1,\ldots ,x_n)\mapsto (\xi \cdot x_1,\ldots ,\xi \cdot x_r,\xi^2 \cdot x_{r+1},\ldots ,\xi^2 \cdot x_n) \ .
\]
Let $Y^\prime$ be the blow-up of $Y$ in the origin with exceptional divisor $E^\prime\subseteq Y^\prime$.
Then $\phi$ lifts to an automorphism $\phi'\in \Aut(Y^\prime)$ and we define $Y^{\prime\prime}$ to be the blow-up of $Y^\prime$ along $\Fix_{Y^{\prime}}(\phi')$.
The exceptional divisor of this blow-up is denoted by $E^{\prime\prime}\subseteq Y^{\prime\prime}$ and $\phi'$ lifts to an automorphism $\phi''\in \Aut(Y'')$.
In this situation, we have the following lemma.

\begin{lemma} \label{lem:sing1}
The fixed point set of $\phi''$ on $Y^{\prime\prime}$ equals $E^{\prime\prime}$.
Moreover:
\begin{enumerate}
\item If $r=0$ or $r=n$, then $E^{\prime\prime}\cong E^\prime \cong \CP^{n-1}$.
Otherwise, $E^\prime\cong \CP^{n-1}$ and $E^{\prime\prime}$ is a disjoint union of $\CP^{r-1}\times \CP^{n-r}$ and $\CP^r\times \CP^{n-r-1}$.
\label{item1:sing1}
\item The quotient $Y^{\prime\prime}/\phi''$ is smooth and admits local holomorphic coordinates $(z_1,\ldots,z_n)$ where each $z_j$ comes from a $\phi$-invariant meromorphic function on $Y$, explicitly given by a quotient of two monomials in $x_1,\ldots , x_n$.\label{item2:sing1} 
\end{enumerate}
\end{lemma}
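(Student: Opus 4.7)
My plan is to carry out everything in the standard affine charts of the two successive blow-ups.

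First I would read off $\phi'$ chart by chart on $Y'$. In the $i$-th affine chart of $Y'$, with coordinates $y_i=x_i$ and $y_j=x_j/x_i$ for $j\neq i$, one gets $\phi'(y_i)=\xi^{\epsilon_i}y_i$ and $\phi'(y_j)=\xi^{\epsilon_j-\epsilon_i}y_j$ for $j\neq i$, where $\epsilon_k=1$ for $k\leq r$ and $\epsilon_k=2$ otherwise. The origin being the only $\phi$-fixed point of $Y$, every $\phi'$-fixed point lies in $E'=\{y_i=0\}$. If $r\in\{0,n\}$ all $\epsilon_k$ coincide, $\phi'$ only scales the normal direction $y_i$, and $\Fix_{Y'}(\phi')=E'\cong \CP^{n-1}$. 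If $0<r<n$ the exponent $\epsilon_j-\epsilon_i$ is nonzero precisely when $i,j$ lie on opposite sides of $r$, pinning the $\phi'$-fixed locus in chart $U_i$ to $\{y_i=0\}\cap\{y_j=0\ :\ \epsilon_j\neq\epsilon_i\}$; globally this is the disjoint union of the two linear subspaces of $E'=\CP^{n-1}$ cut out respectively by $X_{r+1}=\cdots=X_n=0$ and $X_1=\cdots=X_r=0$, i.e.\ $\CP^{r-1}\sqcup \CP^{n-r-1}$.

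Next I would pass to $Y''$. For $r\in\{0,n\}$ the fixed locus is already a Cartier divisor, the blow-up is trivial, $Y''=Y'$ and $E''=E'$, which settles item (\ref{item1:sing1}) in this range. For $0<r<n$ I would work on an affine chart of $Y''$ coming from a chart $i\leq r$ of $Y'$ and a choice of dominant coordinate for the second blow-up. In the chart where $y_i$ dominates, with local coordinate $\tilde w=y_i$ along the blow-up direction, one finds $\phi''(\tilde w)=\xi\tilde w$ and every other chart coordinate ($y_j$ for $1\leq j\leq r$, $j\neq i$, and $y_j/y_i$ for $j>r$) is fixed by $\phi''$; in a chart where some $y_{j_0}$ with $j_0>r$ dominates, an analogous computation shows $\phi''$ scales only $y_{j_0}$ by $\xi$. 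Hence in every chart the $\phi''$-fixed locus is exactly the local piece of the second exceptional divisor, giving $\Fix_{Y''}(\phi'')=E''$. The projective-bundle description of the exceptional divisor of the blow-up, combined with a short computation of the normal bundles of the two fixed components in $Y'$ via the conormal sequence $0\to N_{F/E'}\to N_{F/Y'}\to N_{E'/Y'}|_F\to 0$, then yields the description of $E''$ asserted in item (\ref{item1:sing1}).

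For item (\ref{item2:sing1}) I exploit the fact established above that in every chart constructed $\phi''$ scales one coordinate by $\xi$ and fixes the remaining $n-1$ coordinates. The quotient by $\phi''$ is therefore locally isomorphic to $\C^{n-1}\times (\C/(\Z/3\Z))\cong \C^n$ (via the cube of the scaled coordinate) and is in particular smooth, with a distinguished system of coordinates given by the cube of the scaled coordinate and the remaining fixed ones. Pulled back through the two blow-ups to $Y$, each of these is a $\phi$-invariant quotient of two monomials in $x_1,\ldots,x_n$; for example, in the $y_i$-dominant chart above, $\tilde w^3=x_i^3$, $y_j=x_j/x_i$ for $j\leq r$, $j\neq i$, and $y_j/y_i=x_j/x_i^2$ for $j>r$, each of which is directly checked to be $\phi$-invariant. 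The only real obstacle here is organizational: every individual step is a short explicit calculation, but one must carefully track which chart of $Y''$ one is working in, and on which side of $r$ each index lies, in order to keep the case analysis from becoming unwieldy.
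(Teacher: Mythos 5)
Your chart computations for the fixed loci, for the identity $\Fix_{Y''}(\phi'')=E''$, and for part (\ref{item2:sing1}) are correct and follow essentially the same route as the paper: there too one works in the affine charts (\ref{eq:blow-up:coord}), observes that for $r\in\{0,n\}$ the fixed locus is already the divisor $E'$, so that $Y''=Y'$ and the quotient carries the monomial coordinates $(x_1/x_i,\ldots,x_i^3,\ldots,x_n/x_i)$, and for $0<r<n$ reduces to that case locally, since in a chart $U_i$ with $i\le r$ the lift $\phi'$ fixes $r-1$ coordinates and scales the remaining $n-r+1$ by $\xi$ (symmetrically for $i>r$). Your verification that each coordinate of $Y''/\phi''$ descends from a $\phi$-invariant quotient of monomials is exactly what part (\ref{item2:sing1}) requires, and parts (\ref{item2:sing1}) and the fixed-locus identification are the only pieces of the lemma used later in the paper.

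The gap is in your treatment of the global structure of $E''$ in part (\ref{item1:sing1}) for $0<r<n$. The conormal sequence you invoke does determine $N_{E_1'/Y'}$: since $\operatorname{Ext}^1\left(\mathcal O(-1),\mathcal O(1)^{\oplus(n-r)}\right)=0$ on $\CP^{r-1}$, it splits and gives $N_{E_1'/Y'}\cong\mathcal O(1)^{\oplus(n-r)}\oplus\mathcal O(-1)$. But the projectivization of this bundle is \emph{not} $\CP^{r-1}\times\CP^{n-r}$ once $r\ge 2$: the summands have different degrees, and no twist by a line bundle makes the bundle trivial. Concretely, for $n=3$ and $r=2$ the component of $E''$ over $E_1'\cong\CP^1$ is $\mathbb P(\mathcal O(1)\oplus\mathcal O(-1))$, the second Hirzebruch surface (in the charts one reads off the clutching function $[u:w]\mapsto[v^2u:w]$ directly), which is not biholomorphic to $\CP^1\times\CP^1$. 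So the normal-bundle computation does not ``yield'' the asserted product decomposition; rather, it shows that the product claim in part (\ref{item1:sing1}) cannot hold as stated for $n\ge 3$ and $0<r<n$ --- the two components of $E''$ are $\CP^{n-r}$- and $\CP^{r}$-bundles over $\CP^{r-1}$ and $\CP^{n-r-1}$ respectively, but in general nontrivial ones. The paper's own proof also passes over this point, and nothing downstream is affected since only part (\ref{item2:sing1}) and the equality $\Fix_{Y''}(\phi'')=E''$ are ever used; still, you should either prove the bundles are trivial (you cannot) or record the weaker, correct description.
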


\begin{proof}
This Lemma is proven by a calculation, similar to that in \cite[pp.\ 84-87]{kollar:sing}, where the case $n=2$ is carried out.

The automorphism $\phi'$ acts on the exceptional divisor $E^{\prime }\cong \CP^{n-1}$ of $Y^\prime \to Y$ as follows:
\[
[x_1:\ldots :x_n]\mapsto [\xi \cdot x_1:\ldots :\xi \cdot x_r:\xi^2 \cdot x_{r+1}:\ldots :\xi^2 \cdot x_{n}] \ .
\]
Hence, if $r=0$ or $r=n$, then $\Fix_{Y^\prime}(\phi')$ equals $E^\prime$.
Since this is a smooth divisor on $Y^\prime$, the blow-up $Y^{\prime\prime}\to Y^\prime$ is an isomorphism and the quotient $Y^{\prime\prime}/\phi''$ is smooth.
Moreover, $E^\prime\cong E^{\prime \prime}$ is covered by $n$ charts $U_1,\ldots ,U_n$ such that on $U_i$, coordinates are given by
\begin{align} \label{eq:blow-up:coord}
\left(\frac{x_1}{x_i},\ldots , \frac{x_{i-1}}{x_i},x_i,\frac{x_{i+1}}{x_i},\ldots ,\frac{x_n}{x_i}\right) \ .
\end{align}
The quotient $Y^{\prime\prime}/\phi''$ is then covered by $U_1/\phi'',\ldots ,U_n/\phi''$.
Coordinate functions on $U_i/\phi''$ are given by the following $\phi$-invariant rational functions on $Y$:
\[
\left(\frac{x_1}{x_i},\ldots , \frac{x_{i-1}}{x_i},x_i^3,\frac{x_{i+1}}{x_i},\ldots ,\frac{x_n}{x_i}\right) \ .
\]
This proves the Lemma for $r=0$ or $r=n$.

If $0<r<n$, then $\Fix_{Y^\prime}(\phi')$ equals the disjoint union of $E^\prime_1\cong \CP^{r-1}$ and $E^\prime_2\cong \CP^{n-r-1}$, sitting inside $E^\prime$.
The exceptional divisor $E^\prime$ is still covered by the $n$-charts $U_1,\ldots ,U_n$, defined above.
Moreover, the charts $U_1,\ldots ,U_r$ cover $E^\prime_1$ and $U_{r+1},\ldots ,U_n$ cover $E_2^\prime$. 
Fix a chart $U_i$ with coordinate functions $(z_1,\ldots ,z_n)$.
If $i\leq r$, then $\phi'$ acts on $r-1$ of these coordinates by the identity and on the remaining coordinates by multiplication with $\xi$.
Conversely, if $i>r$, then $\phi'$ acts on $n-r-1$ coordinates by the identity and on the remaining coordinates by multiplication with $\xi^2$.
We are therefore in the situation discussed in the previous paragraph and the Lemma follows by an application of that result in dimension $n-r+1$ and $r+1$ respectively.
\end{proof}

\subsection{Inductive approach} \label{subsec:ind.constr}
In this section we explain a general construction method which will allow us to prove Theorem \ref{thm:Z_n} by induction on the dimension in Section \ref{subsec:proof:thm:Z_n}.

For natural numbers $a\neq b$ and $c\geq 0$, let $\mathcal S_c^{a,b}$ denote the family of pairs $(X,\phi)$, consisting of a smooth complex projective variety $X$ of dimension $a+b$ and an automorphism $\phi\in \Aut(X)$ of order $3^c$, such that properties (\ref{item:h^{a,b}})--(\ref{item:coho(Fix)}) below hold.
Here, $\zeta$ denotes a fixed primitive $3^c$-th root of unity and $g:=(3^c-1)/2$:
\begin{enumerate}
\item The Hodge numbers $h^{p,q}$ of $X$ are given by $h^{a,b}=h^{b,a}=g$ and $h^{p,q}=0$ for all other $p\neq q$.\label{item:h^{a,b}}
\item The action of $\phi$ on $H^{a,b}(X)$ has eigenvalues $\zeta,\ldots, \zeta^g$. \label{item:eig}
\item The group $H^{p,p}(X)$ is for all $p\geq 0$ generated by algebraic classes which are fixed by the action of $\phi$. \label{item:alg.classes}
\item The set $\Fix_X\left(\phi^{3^{c-1}}\right)$ can be covered by local holomorphic charts such that $\phi$ acts on each coordinate function by multiplication with some power of $\zeta$.\label{item:Fix}
\item For $0\leq l\leq c-1$, the cohomology of $\Fix_X\left(\phi^{3^l}\right)$ is generated by algebraic classes which are fixed by the action of $\phi$. \label{item:coho(Fix)}
\end{enumerate}
For $0\leq l\leq c-1$, we have obvious inclusions 
\[
 \Fix_X\left(\phi^{3^l}\right)\subseteq \Fix_X\left(\phi^{3^{c-1}}\right) .
\] 
It therefore follows from (\ref{item:Fix}) that $\Fix_X\left(\phi^{3^l}\right)$ can be covered by local holomorphic coordinates on which $\phi^{3^l}$ acts by multiplication with some power of $\zeta^{3^l}$. 
In particular, $\Fix_X\left(\phi^{3^l}\right)$ is smooth for all $0\leq l \leq c-1$; its cohomology is of $(p,p)$-type, since it is generated by algebraic classes by (\ref{item:coho(Fix)}).
We also remark that condition (\ref{item:alg.classes}) implies that each variety in $\mathcal S_c^{a,b}$ satisfies the Hodge conjecture.
Finally, note that $(X,\phi)\in \mathcal S_c^{a,b}$ is equivalent to $(X,\phi^{-1})\in \mathcal S_c^{b,a}$.

The inductive approach to Theorem \ref{thm:Z_n} is now given by the following.

\begin{proposition} \label{prop:ind.constr}
Let $(X_1,\phi_1^{-1})\in  \mathcal S_c^{a_1,b_1}$ and $(X_2,\phi_2)\in \mathcal S_c^{a_2,b_2}$.
Then 
\[
\left(X_1\times X_2\right) / \left\langle \phi_1\times \phi_2\right\rangle
\] 
admits a smooth model $X$ such that the automorphism $\id \times \phi_2$ on $X_1\times X_2$ induces an automorphism $\phi\in \Aut(X)$ with $(X,\phi)\in \mathcal S_c^{a,b}$, where $a=a_1+a_2$ and $b=b_1+b_2$.
\end{proposition}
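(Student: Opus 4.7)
The plan is to construct $X$ as a smooth model of $(X_1\times X_2)/\langle \phi_1\times\phi_2\rangle$ via an iterated $\langle\phi_1\times\phi_2\rangle$-equivariant resolution of $Z:=X_1\times X_2$, and then verify conditions (1)--(5) using the Künneth formula, the eigenvalue data from (2), and the blow-up formula of Theorem \ref{thm:blow-up}. The induced automorphism $\phi$ will come from the fact that $\id\times\phi_2$ commutes with $\phi_1\times\phi_2$, preserves every fixed stratum, and hence lifts canonically through each blow-up and descends through the final quotient as an automorphism of order $3^c$.

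For the resolution, property (4) for $X_1$ and $X_2$ gives, around each stratum $\Fix_{X_1}(\phi_1^{3^l})\times\Fix_{X_2}(\phi_2^{3^l})=\Fix_Z((\phi_1\times\phi_2)^{3^l})$, local holomorphic charts on which $\phi_1\times\phi_2$ acts diagonally by powers of $\zeta$. This is precisely the global analogue of the local setup of Lemma \ref{lem:sing1}. I would proceed by induction on $c$: after carrying out the two-step blow-up of Lemma \ref{lem:sing1} stratum-by-stratum along $\Fix_Z((\phi_1\times\phi_2)^{3^{c-1}})$, I would quotient by the order-$3$ subgroup $\langle(\phi_1\times\phi_2)^{3^{c-1}}\rangle$ and apply the inductive hypothesis to the residual $\Z/3^{c-1}$-action; by Lemma \ref{lem:sing1}(\ref{item2:sing1}) the new local coordinates are monomials in the old, so the diagonal-coordinate condition persists and the induction goes through. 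The base case $c=1$ is provided directly by Lemma \ref{lem:sing1}.

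For the cohomology, Künneth together with (1) for $X_i$ shows that the only off-diagonal pieces of $H^*(Z)$ are $H^{a_1,b_1}(X_1)\otimes H^{a_2,b_2}(X_2)$, its conjugate, and tensor products of an off-diagonal factor with a $(p,p)$-factor. Since $\phi_1$ acts on $H^{a_1,b_1}(X_1)$ with eigenvalues $\zeta^{-1},\ldots,\zeta^{-g}$ (as $(X_1,\phi_1^{-1})\in\mathcal S_c^{a_1,b_1}$) and $\phi_2$ acts on $H^{a_2,b_2}(X_2)$ with eigenvalues $\zeta,\ldots,\zeta^g$, while (3) makes $\phi_1,\phi_2$ act trivially on all $(p,p)$-classes, the only $(\phi_1\times\phi_2)$-invariants away from the diagonal arise in $H^{a,b}(Z)\oplus H^{b,a}(Z)$ with $a=a_1+a_2$, $b=b_1+b_2$: pairwise eigenvalue matching produces exactly $g$ invariants of each type, and $\id\times\phi_2$ acts on the $(a,b)$-invariants with eigenvalues $\zeta,\ldots,\zeta^g$. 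The invariant $(p,p)$-cohomology is spanned by products of algebraic invariant classes from (3), hence is algebraic and $\phi$-fixed. By Theorem \ref{thm:blow-up} each blow-up contributes only further $(p,p)$-classes, coming as shifts of the $\phi$-invariant algebraic cohomology of the smooth fixed loci (ensured by (5) for $X_i$) together with classes of exceptional divisors; all these are algebraic and $\phi$-invariant, establishing (1)--(3).

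Conditions (4) and (5) are inherited from the local picture of Lemma \ref{lem:sing1}: local coordinates on $X$ and on each exceptional stratum are monomials in the original diagonal coordinates, hence still scaled by powers of $\zeta$ under $\phi$, giving (4); the fixed loci $\Fix_X(\phi^{3^l})$ are assembled from products $\Fix_{X_1}(\phi_1^{3^l})\times\Fix_{X_2}(\phi_2^{3^l})$ together with projective bundles appearing as exceptional divisors, so their cohomology is generated by algebraic $\phi$-invariant classes, yielding (5). The main obstacle is the resolution step itself: one must organize the iterated equivariant blow-ups so that the quotient is smooth, the diagonal-coordinate property is preserved at every intermediate stage (so the induction on $c$ goes through cleanly), and no unexpected off-diagonal cohomology enters through the exceptional loci. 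Once the local model of Lemma \ref{lem:sing1} is globalized along the Cynk--Hulek strategy, the remaining verifications reduce to Künneth and the blow-up formula.
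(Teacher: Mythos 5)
Your overall architecture is the paper's: the filtration of $\left\langle \phi_1\times\phi_2\right\rangle$ by order-$3^i$ subgroups, the two-step blow-up of Lemma \ref{lem:sing1} followed by an order-three quotient at each stage, K\"unneth plus eigenvalue matching for the $g$-dimensional $(a,b)$-part, and the blow-up formula for the $(p,p)$-part. The genuine gap is in your treatment of conditions (4) and (5), and it traces back to a misidentification of the fixed loci. You write that $\Fix_X\left(\phi^{3^l}\right)$ is ``assembled from products $\Fix_{X_1}(\phi_1^{3^l})\times\Fix_{X_2}(\phi_2^{3^l})$'' plus exceptional loci; but those products are the fixed loci of the powers of $\phi_1\times\phi_2$ --- the element you quotient by --- not of $\phi=\id\times\phi_2$. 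On the quotient, $\id\times\phi_2$ agrees with $\phi_1^{-1}\times\id$ modulo $\left\langle\phi_1\times\phi_2\right\rangle$, so a point of $X_1\times X_2$ maps to a fixed point of $\phi^{3^{c-1}}$ whenever it lies in $\left(\Fix_{X_1}(\phi_1^{3^{c-1}})\times X_2\right)\cup\left(X_1\times\Fix_{X_2}(\phi_2^{3^{c-1}})\right)$: a union of sets each containing a \emph{full} factor, not a product of the two fixed loci. Verifying (5) for these loci therefore forces you to take invariants of the entire cohomology of one factor, which is exactly where hypotheses (1)--(3) for that factor --- and not just (5) --- must be invoked to dispose of its off-diagonal classes.

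More structurally, this is why the paper works with the rank-two group $G=\left\langle\phi_1\times\id,\ \id\times\phi_2\right\rangle\cong\Z/3^c\Z\times\Z/3^c\Z$ rather than only with the cyclic group being quotiented, and proves (Lemma \ref{lem:Fix(Gamma)}) that for \emph{every} subgroup $\Gamma\subseteq G$ acting nontrivially, the sets $\Fix_{Y_i}(\Gamma)$, $\Fix_{Y_i'}(\Gamma)$, $\Fix_{Y_i''}(\Gamma)$ are smooth, componentwise $G$-stable, and have $G_c$-invariant cohomology generated by $G$-invariant algebraic classes --- including the delicate case where the preimage of $\Fix_{Y_{i+1}}(\Gamma)$ is not contained in $\Fix_{Y_i''}(\eta_{i+1})$ and decomposes as $\bigcup_{j=0}^{2}p_i\left(\Fix_{Y_i''}(\gamma\circ\eta_{i+1}^{j})\right)$. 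That bookkeeping for arbitrary $\Gamma$, carried through every floor of the tower, is the real content of the proof; your proposal acknowledges ``the main obstacle'' but does not supply the statement that makes the induction close.
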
 

\begin{proof}
We define the subgroup 
\[
G:=\left\langle \phi_1\times \id, \id \times \phi_2\right\rangle  
\]
of $\Aut(X_1\times X_2)$.
For $i=1,\ldots ,c$ we consider the element
\[
\eta_i:=\left(\phi_1\times \phi_2\right)^{3^{c-i}} 
\]
of order $3^i$ in $G$.
This element generates a cyclic subgroup
\[
G_i:=\left\langle \eta_i\right\rangle \subseteq G \ ,
\]
and we obtain a filtration
\[
0=G_0\subset G_1\subset \cdots \subset G_c=\left\langle \phi_1\times \phi_2\right\rangle \ ,
\]
such that each quotient $G_i/G_{i-1}$ is cyclic of order three, generated by the image of $\eta_i$.

By definition, $G$ acts on 
\[
Y_0:=X_1\times X_2 \ .
\] 
Using the assumptions that $(X_1,\phi_1^{-1})$ and $(X_2,\phi_2)$ satisfy (\ref{item:h^{a,b}})--(\ref{item:alg.classes}), it is easily seen (and we will give the details later in this proof) that the $\left\langle \phi_1\times \phi_2\right\rangle$-invariant cohomology of $Y_0$ has Hodge numbers $h^{a,b}=h^{b,a}=g$ and $h^{p,q}\neq 0$ for all other $p\neq q$.
The strategy of the proof of Proposition \ref{prop:ind.constr} is now as follows.

We will construct inductively for $i=1,\ldots ,c$ smooth models $Y_i$ of $Y_0/G_i$, fitting into the following diagram:
\begin{align} \label{eq:diag:Y_i}
\begin{xy}
  \xymatrix{
		 &  \ar[dl] Y_{c-1}^{\prime \prime} \ar[dr]   &  	 				& \ar[dl] \cdots \ar[dr] & 				&  \ar[dl] Y_1^{\prime \prime} \ar[dr]	& 
		 & Y_0^{\prime \prime} \ar[rd] \ar[ld]&\\
Y_c &  																					 &	 Y_{c-1}	&												& Y_2 	&																				& Y_1
		& & Y_0  \ .
  }	
\end{xy}
\end{align}
Here, $Y_{i-1}^{\prime \prime}\rightarrow Y_{i}$ will be a $3:1$ cover, branched along a smooth divisor, and  $Y_i''\to Y_i$ will be the composition $Y_i''\to Y_i'\to Y_i$ of two blow-down maps. 
This way we obtain a smooth model 
\[
X:=Y_c
\] 
of $Y_0/\left\langle \phi_1\times \phi_2\right\rangle$.
At each stage of our construction, the group $G$ will act (in general non-effectively) and we will show that each blow-up and each triple quotient changes the $\left\langle \phi_1\times \phi_2\right\rangle$-invariant cohomology only by algebraic classes which are fixed by the $G$-action.
Since $\left\langle \phi_1\times \phi_2\right\rangle$ acts trivially on $X$, it follows that $H^\ast(X,\C)$ is generated by $\left\langle \phi_1\times \phi_2\right\rangle$-invariant classes on $Y_0$ together with algebraic classes which are fixed by the action of $G$.
Hence, $X$ satisfies (\ref{item:h^{a,b}}).
We then define $\phi\in \Aut(X)$ via the action of $\id\times \phi_2 \in G$ on $Y_c$ and show carefully that the technical conditions (\ref{item:eig})--(\ref{item:coho(Fix)}) are met by $(X,\phi)$.

In the following, we give the details of the approach outlined above.

We begin with the explicit construction of diagram (\ref{eq:diag:Y_i}).
Firstly, let $Y_0^{\prime}$ be the blow-up of $Y_0$ along $\Fix_{Y_0}\left(\eta_1\right)$.
Since $G$ is an abelian group, its action on $Y_0$ restricts to an action on $\Fix_{Y_0}\left(\eta_1\right)$ and so it lifts to an action on the blow-up $Y_0'$.
This allows us to define $Y_0^{\prime\prime}$ via the blow-up of $Y_0^{\prime}$ along $\Fix_{Y_0^\prime}\left(\eta_1\right)$.
Again, $G$ lifts to $Y_0^{\prime\prime}$ since it is abelian.
Using this action, we define 
\[
Y_1:=Y_0^{\prime \prime}/\left\langle \eta_1\right\rangle \ ,
\]
where by abuse of notation, $\left\langle \eta_1\right\rangle$ denotes the subgroup of $\Aut(Y_0'')$ which is generated by the action of $\eta_1\in G$.

We claim that $Y_1$ is a smooth model of $Y_0/\left\langle \eta_1 \right\rangle$.
To see this, we define 
\[
U_0:=Y_0\setminus \Fix_{Y_0}(\eta_1) 
\]
and note that the preimage of this set under the blow-down maps
\[
Y_0''\longrightarrow Y_0' \longrightarrow Y_0
\] 
gives Zariski open subsets
\[
U_0'\subseteq Y_0'\ \ \text{and}\ \ U_0''\subseteq Y_0'' \ ,
\]
both isomorphic to $U_0$.
The group $G$ acts on these subsets and so
\[
U_1:=U_0''/\left\langle \eta_1\right\rangle
\] 
is a Zariski open subset in $Y_1$ which is isomorphic to the Zariski open subset
\[
U_0/\left\langle \eta_1 \right\rangle \subseteq Y_0/\left\langle \eta_1 \right\rangle \ . 
\] 
The latter is smooth since $\eta_1$ acts freely on $U_0$ and so it remains to see that $Y_1$ is smooth at points of the complement of $U_1\subseteq Y_1$.
To see this, note that by (\ref{item:Fix}), 
\[
\Fix_{Y_0}\left(\eta_1\right)=\Fix_{X_1}(\phi_1^{3^{c-1}})\times \Fix_{X_2}(\phi_2^{3^{c-1}}) 
\]
inside $Y_0$ can be covered by local holomorphic coordinates on which $\phi_1\times \phi_2$ acts by multiplication with some powers of $\zeta$.
On these coordinates, $\eta_1$ acts by multiplication with some powers of a third root of unity.
The local considerations of Lemma \ref{lem:sing1} therefore apply and we deduce that $Y_1$ is indeed a smooth model of $Y_0/G_1$.

Since $G$ is abelian, the $G$-action on $Y_0''$ descends to a $G$-action on $Y_1$.
The subgroup $G_1\subseteq G$ acts trivially on $Y_1$ and the induced $G/G_1$-action on $Y_1$ is effective.
Also note that $G_i$ acts freely on $U_0\subseteq Y_0$ and so $G_i/G_1$ acts, for $2\leq i\leq c$, freely on the Zariski open subset $U_1\subseteq Y_1$. 
By (\ref{item:Fix}), the complement of $U_0$ in $Y_0$ can be covered by local holomorphic coordinates on which $G$ acts by multiplication with some roots of unity on each coordinate.
It therefore follows from the second statement in Lemma \ref{lem:sing1} that the complement of $U_1$ in $Y_1$ can also be covered by local holomorphic coordinates in which $G$ acts by multiplication with some roots of unity on each coordinate.
This shows that we can repeat the above construction inductively.

We obtain for $i\in \left\{1,\ldots ,c\right\}$ smooth models
\[
Y_i:=Y_{i-1}''/\left\langle \eta_i\right\rangle
\]
of $Y_0/G_i$ on which $G$ acts (non-effectively). 
The smooth model $Y_i$ contains a Zariski open subset
\[
U_i\cong U_0/\left\langle \eta_i\right\rangle
\]
on which $G_l/G_i$ acts freely for all $i+1\leq l \leq c$; explicitly, $U_i:= U_{i-1}''/\left\langle \eta_i\right\rangle$, where $U_{i-1}''\subseteq Y_{i-1}''$ is isomorphic to $U_{i-1}$.
The complement of $U_i$ is covered by local holomorphic coordinates on which $G$ acts by multiplication with some roots of unity on each coordinate.

$Y_i''$ is then defined via the two-fold blow-up
\begin{align} \label{eq:blow-upseq}
Y_i''\longrightarrow Y_i'\longrightarrow Y_i \ ,
\end{align}
where one blows up the fixed point set of the action of $\eta_{i+1}$ on $Y_i$ and $Y_i'$ respectively.
The preimage of $U_i$ in $Y_i'$ and $Y_i''$ gives Zariski open subsets
\[
U_i'\subseteq Y_i'\ \ \text{and}\ \ U_i''\subseteq Y_i'' \ ,
\]
which are both isomorphic to $U_i$.
Since $G$ is abelian, the $G$-action on $Y_i$ induces actions on $Y_i'$ and $Y_i''$ and these actions restrict to actions on $U_i\cong U_i'\cong U_i''$. 
The complement of $U_i'$ in $Y_i'$ (resp.\ $U_i''$ in $Y_i''$) is by Lemma \ref{lem:sing1} covered by local holomorphic coordinates on which $G$ acts by multiplication with some roots of unity on each coordinate.
Using the local considerations in Lemma \ref{lem:sing1}, it follows that $Y_{i+1}=Y_i''/\left\langle \eta_{i+1}\right\rangle$ is a smooth model of $Y_0/G_{i+1}$ which has the above stated properties.
This finishes the inductive construction of diagram (\ref{eq:diag:Y_i}).

Our next aim is to compute the cohomology of $Y_c$.
Since $G_c$ acts trivially on $Y_c$, we may as well compute the $G_c$-invariant cohomology of $Y_c$.
This point of view has the advantage that it allows an inductive approach, since for $i=0,\ldots ,c-1$, the $G_c$-invariant cohomology of $Y_i$ is easier to compute than its ordinary cohomology.

Before we can carry out these calculations, we have to study the action of arbitrary subgroups $\Gamma \subseteq G$ on $Y_i$, $Y_i'$ and $Y_i''$.
Since $G$ is an abelian group, it follows that it acts on the fixed point sets $\Fix_{Y_i}(\Gamma)$, $\Fix_{Y_i'}(\Gamma)$ and $\Fix_{Y_i''}(\Gamma)$, defined in (\ref{def:Fix_{Y}(Gamma)}). 
These actions have the following important properties, where as usual, cohomology means singular cohomology with coefficients in $\C$ (see our conventions in Section \ref{subsec:conv}).

\begin{lemma} \label{lem:Fix(Gamma)}
Let $\Gamma\subseteq G$ be a subgroup which is not contained in $G_i$.
Then $\Fix_{Y_i}(\Gamma)$, $\Fix_{Y_i'}(\Gamma)$ and $\Fix_{Y_i''}(\Gamma)$ are smooth, their $G$-actions restrict to actions on each irreducible component and their $G_c$-invariant cohomology is generated by $G$-invariant algebraic classes.
\end{lemma}
Note that the assumption $\Gamma \nsubseteq G_i$ is equivalent to saying that the action of $\Gamma$ is nontrivial on each of the spaces $Y_i$, $Y_i'$ and $Y_i''$.

\begin{proof}[Proof of Lemma \ref{lem:Fix(Gamma)}]
To begin with, we want to verify the Lemma for $\Fix_{Y_0}(\Gamma)$, where $\Gamma\subseteq G$ is nontrivial.
Recall that $Y_0=X_1\times X_2$ and that each element in $\Gamma$ is of the form $\phi_1^j\times \phi_2^k$.
The fixed point set of such an element is then given by
\[
\Fix_{Y_0}(\phi_1^j\times \phi_2^k)=\Fix_{X_1}(\phi_1^j)\times \Fix_{X_2}(\phi_2^k) \ .
\]
The intersection of sets of the above form is still of the above form and so
\[
\Fix_{Y_0}(\Gamma)=\Fix_{X_1}(\phi_1^j)\times \Fix_{X_2}(\phi_2^k) \ ,
\]
for some natural numbers $j$ and $k$.
Since $(X_1,\phi_1^{-1})$ and $(X_2,\phi_2)$ satisfy (\ref{item:Fix}), it follows that $\Fix_{Y_0}(\Gamma)$ is smooth.
Also, $G$ acts trivially on $H^0(\Fix_{Y_0}(\Gamma),\C)$ by (\ref{item:coho(Fix)}) and so the $G$-action restricts to an action on each irreducible component of $\Fix_{Y_0}(\Gamma)$.

Since $\Gamma$ is not the trivial group, we now assume without loss of generality that $j$ is not divisible by $3^c$.
Since $(X,\phi_1^{-1})$ satisfies (\ref{item:coho(Fix)}), the cohomology of $\Fix_{X_1}(\phi_1^j)$ is then generated by $\left\langle \phi_1\right\rangle$-invariant algebraic classes.
The $G_c$-invariant cohomology of $\Fix_{Y_0}(\Gamma)$ is therefore generated by products of these algebraic classes with $\left\langle \phi_2\right\rangle$-invariant classes on $\Fix_{X_2}(\phi_2^k)$.
Since $(X_2,\phi_2)$ satisfies (\ref{item:h^{a,b}})--(\ref{item:alg.classes}) and (\ref{item:coho(Fix)}), the latter are, regardless whether $k$ is divisible by $3^c$ or not, given by $\left\langle \phi_2\right\rangle$-invariant algebraic classes.
This shows that the $G_c$-invariant cohomology of $\Fix_{Y_0}(\Gamma)$ is generated by $G$-invariant algebraic classes, as we want.

Using induction, let us now assume that the Lemma is true for $\Fix_{Y_i}(\Gamma)$ for some $i\geq 0$ and for all $\Gamma \nsubseteq G_i$.
Blowing-up $\Fix_{Y_i}(\eta_{i+1})$ on $Y_i$, we obtain the following diagram: 
\begin{align*} 
\begin{xy}
  \xymatrix{
	   \ar [d] \Fix_{Y_i'}(\Gamma)	  \ar@{^{(}->}[r] & \ar[d] Y_{i}' 	\\
				\Fix_{Y_i}(\Gamma)	\ar@{^{(}->}[r] & Y_i 			
  }	
\end{xy}
\end{align*}
and we denote the exceptional divisor of the blow-up $Y_i'\to Y_i$ by $E_i'\subseteq Y_i'$.

Let us first prove that $\Fix_{Y_i'}(\Gamma)$ is smooth and that $G$ acts on its irreducible components.
To see this, note that away from $E_i'$, the blow-down map $Y_i'\to Y_i$ is an isomorphism onto its image.
Since $\Fix_{Y_i}(\Gamma)$ is smooth, it is then clear that the intersection of $\Fix_{Y_i'}(\Gamma)$ with $Y_i'\setminus E_i'$ is smooth.
Also, $G$ acts on the irreducible components of $\Fix_{Y_i'}(\Gamma)$ which are not contained in $E_i'$, since the analogous statement holds for the components of $\Fix_{Y_i}(\Gamma)$.
On the other hand, $E_i'$ can be covered by local holomorphic coordinates on which $G$ acts by multiplication with roots of unity.
In each of these charts, $\Fix_{Y_i'}(\Gamma)$ corresponds to a linear subspace on which $G$ acts.
We conclude that $\Fix_{Y_i'}(\Gamma)$ is smooth and that $G$ acts on each of its irreducible components. 

Next, let $P$ be an irreducible component of $\Fix_{Y_i'}(\Gamma)$.
We have to prove the following

\begin{claim}
The $G_c$-invariant cohomology of $P$ is generated by $G$-invariant algebraic classes.
\end{claim}

\begin{proof}
Let us denote the image of $P$ in $Y_i$ by $Z$.
Then $Z$ is contained in $\Fix_{Y_i}(\Gamma)$ and the proof of the claim is divided into two cases.

In the first case, we suppose that $Z$ is not contained in the intersection 
\begin{align} \label{eq:Fix(Gamma,eta)}
\Fix_{Y_i}(\left\langle \Gamma,\eta_{i+1}\right\rangle)=\Fix_{Y_i}(\Gamma)\cap \Fix_{Y_i}(\eta_{i+1}) \ .
\end{align}
In this case, $P$ is the strict transform of $Z$ in $Y_i'$.
Conversely, if $\tilde Z \subseteq \Fix_{Y_i}(\Gamma)$ is any irreducible component, not contained in (\ref{eq:Fix(Gamma,eta)}), then its strict transform in $Y_i'$ is contained in $\Fix_{Y_i'}(\Gamma)$.
Hence, $Z$ is in fact an irreducible component of $\Fix_{Y_i}(\Gamma)$.
This implies that $\Fix_{Z}(\eta_{i+1})$ consists of irreducible components of (\ref{eq:Fix(Gamma,eta)}) and so $\Fix_{Z}(\eta_{i+1})$ is smooth by induction. 
Moreover, the strict transform $P$ of $Z$ in $Y_i'$ can be identified with the blow-up of $Z$ along $\Fix_{Z}(\eta_{i+1})$.
We denote the exceptional divisor of this blow-up by $D$ and obtain natural maps
\[
f:D \hookrightarrow P\ \ \text{and}\ \ g:D\rightarrow \Fix_{Z}(\eta_{i+1}) \ ,
\]
where $f$ denotes the inclusion and $g$ the projection map respectively.
Using Theorem \ref{thm:blow-up} and (\ref{eq:ringstr:blow-up}), we see that the cohomology of $P$ is generated (as a $\C$-module) by pull-back classes of $Z$ together with products 
\[
[D']^j\wedge f_\ast(g^\ast(\alpha)) \ ,
\]
where $D'$ is an irreducible component of $D$, $j$ is some natural number and $\alpha$ is a cohomology class on $\Fix_{Z}(\eta_{i+1})$.

The image $g(D')$ is an irreducible component of $\Fix_{Z}(\eta_{i+1})$.
By induction, $G$ acts on $g(D')$ and hence also on $D'$, the projectivization of the normal bundle of $g(D')$ in $Z$.
This implies that $[D']\in H^\ast(P,\C)$ is a $G$-invariant algebraic class.
Moreover, the $G_c$-invariant cohomology of $Z$ as well as the $G_c$-invariant cohomology of $\Fix_Z(\eta_{i+1})$ is generated by $G$-invariant algebraic classes by induction.
It therefore follows from the above description of $H^\ast(P,\C)$ that the $G_c$-invariant cohomology of $P$ is indeed generated by $G$-invariant algebraic classes.

It remains to deal with the case where the image $Z$ of $P$ in $Y_i$ is contained in (\ref{eq:Fix(Gamma,eta)}).
In this case, around each point of $Z$ there are local holomorphic coordinates $(z_1,\ldots, z_n)$ on which $G$ acts by multiplication with some roots of unity.
In these local coordinates, the fixed point set of $\eta_{i+1}$ corresponds to the vanishing set of certain coordinate functions.
After relabeling these coordinate functions if necessary, we may therefore assume that locally, $\Fix_{Y_i}(\eta_{i+1})$ corresponds to $\left\{z_{m}=\cdots =z_n=0\right\}$ for some $ m\leq n$.
This yields local homogeneous coordinates
\begin{align} \label{eq:coord}
(z_1,\ldots,z_{m-1},[z_{m}:\cdots :z_n]) 
\end{align}
along the exceptional divisor $E_i'$ of $Y_i'\to Y_i$.
After relabeling of the first $m-1$ coordinates if necessary, we may assume that $\Gamma$ acts trivially on $z_1,\ldots, z_{k-1}$ and nontrivially on $z_{k},\ldots, z_{m-1}$ for some $1\leq k \leq m-1$.
After relabeling $z_m,\ldots,z_n$ if necessary, we may then assume that in the homogeneous coordinates (\ref{eq:coord}), $P$ corresponds to $\left\{z_k=\cdots =z_{h}=0\right\}$ for some $m\leq h\leq n$.
Here, each element $\gamma\in \Gamma$ acts trivially on $[z_{h+1}:\ldots:z_n]$, that is, $\gamma$ acts by multiplication with the same root of unity on $z_{h+1},\ldots,z_n$. 

The above local description shows that $P\to Z$ is a $PGL$-subbundle of the $PGL$-bundle $E_i'|_Z\to Z$; explicit bundle charts for $P$ are given by $(z_1,\ldots,z_{k-1}, [z_{h+1}:\ldots:z_n])$ as above.
The exceptional divisor $E_i'$ carries the line bundle $\mathcal O_{E_i'}(1)$ and we denote its restriction to $P$ by $\mathcal O_P(1)$.
The cohomology of $P$ is then generated (as a $\C$-module) by products of pull-back classes on the base $Z$ with powers of $c_1(\mathcal O_P(1))$.
The line bundle $\mathcal O_{E_i'}(1)$ on the exceptional divisor $E_i'$ is isomorphic to the restriction of the line bundle $\mathcal O_{Y_i'}(-E_i')$ on $Y_i'$.
The first Chern class of the latter line bundle is $G$-invariant since $G$ acts on $E_i'$. 
It follows that $c_1(\mathcal O_P(1))$ is a $G$-invariant algebraic cohomology class on $P$.

In the above local coordinates $(z_1,\ldots,z_n)$ on $Y_i$, $Z$ is given by $\left\{z_{k}=\cdots = z_{n}=0\right\}$.
The latter set is in fact the fixed point set of $\left\langle \Gamma, \eta_{i+1}\right\rangle$ in this local chart and so it follows that $Z$ is an irreducible component of (\ref{eq:Fix(Gamma,eta)}).
By induction, the $G_c$-invariant cohomology of $Z$ is therefore generated by $G$-invariant algebraic classes.
By the above description of $H^\ast(P,\C)$, we conclude that the $G_c$-invariant cohomology of $P$ is generated by $G$-invariant algebraic classes, as we want.
This finishes the proof of our claim.
\end{proof}

Altogether, we see that the Lemma holds for $\Fix_{Y_i'}(\Gamma)$. 
Repeating the above argument, we then deduce the same assertion for $\Fix_{Y_i''}(\Gamma)$.

Next, let $\Gamma$ be a subgroup of $G$, not contained in $G_{i+1}$.
We denote by
\[
p_i:Y_i''\longrightarrow Y_{i+1}
\]
the quotient map.
Then, 
\[
p_i^{-1}(\Fix_{Y_{i+1}}(\Gamma)) = \left\{y\in Y''_{i}\ |\ g(y)\in \left\{y, \eta_{i+1}(y),\eta_{i+1}^2(y)\right\}\ \text{for all }g\in\Gamma \right\} \ .
\]
If this set is contained in $\Fix_{Y_i''}(\eta_{i+1})$, then it is given by $\Fix_{Y_i''}(\left\langle \Gamma, \eta_{i+1}\right\rangle)$.
The restriction of $p_i$ to $\Fix_{Y_i''}(\eta_{i+1})$ is an isomorphism onto its image and so we deduce that in this case, $\Fix_{Y_{i+1}}(\Gamma)$ satisfies the Lemma.

Conversely, if $p_i^{-1}(\Fix_{Y_{i+1}}(\Gamma))$ is not contained in $\Fix_{Y_i''}(\eta_{i+1})$, then we pick some 
\begin{align*} 
y\in p_i^{-1}(\Fix_{Y_{i+1}}(\Gamma))\ \ \text{with}\ \ y\notin \Fix_{Y_i''}(\eta_{i+1}) \ .
\end{align*}
Since $\eta_{i+1}$ acts trivially on $Y_{i+1}$ and since we are interested in $\Fix_{Y_{i+1}}(\Gamma)$, we assume without loss of generality that $\eta_{i+1}$ is contained in $\Gamma$.
Then, $\Gamma$ acts transitively on $\left\{y, \eta_{i+1}(y),\eta_{i+1}^2(y)\right\}$.
This gives rise to a short exact sequence
\[
1\longrightarrow H\longrightarrow \Gamma \longrightarrow \Z/3\Z \longrightarrow 1 \ ,
\]
where $H\subseteq \Gamma$ acts trivially on $y$ and where $g\in \Gamma$ is mapped to $j+ 3\Z$ if and only if $g(y)=\eta_{i+1}^j(y)$. 
Recall that $G\cong \Z/3^c\Z\times \Z/3^c\Z$, and so $\Gamma\cong \Z/3^k\Z \times \Z/3^m \Z$ for some $k,m\geq 0$. 
In the above short exact sequence, $\eta_{i+1}$ is mapped to a generator in $\Z/3\Z$ and so $\eta_{i+1}$ cannot be a multiple of $3$ in $\Gamma$.
That is,
\[
\Gamma\cong \left\langle \eta_{i+1}\right\rangle \times \left\langle \gamma\right\rangle\ ,
\]
for some $\gamma\in \Gamma$.
Since $\eta_{i+1}$ acts trivially on $Y_{i+1}$, one easily deduces
\begin{align} \label{eq:p_i^{-1}(Fix(Gamma))}
\Fix_{Y_{i+1}}(\Gamma)=\Fix_{Y_{i+1}}(\gamma)=\bigcup_{j=0}^2 p_i\left(\Fix_{Y_i''}(\gamma\circ \eta_{i+1}^{j}) \right) \ .
\end{align}
The irreducible components of $\Fix_{Y_{i+1}}(\Gamma)$ are therefore of the form $p_i(Z)$ where $Z$ is an irreducible component of 
\[
\bigcup_{j=0}^2 \Fix_{Y_i''}(\gamma\circ \eta_{i+1}^{j}) \ .
\]
As we have already proven the Lemma on $Y_i''$, we know that the $G$-action on $Y_i''$ restricts to an action on $Z$.
In particular, 
\[
p_i(Z)=Z/\left\langle \eta_{i+1}\right\rangle \ .
\] 
Since the abelian group $G$ acts on $Z$, it also acts on the above quotient.

For the moment we assume that $p_i(Z)$ is smooth.
Its cohomology is then given by the $\eta_{i+1}$-invariant classes on $Z$.
Since $\eta_{i+1}$ is contained in $G_c$, it follows that the $G_c$-invariant cohomology of $p_i(Z)$ is given by the $G_c$-invariant cohomology of $Z$.
Since we know the Lemma on $Y_i''$, the latter is generated by $G$-invariant algebraic classes, as we want.

It remains to see that $\Fix_{Y_{i+1}}(\Gamma)$ is smooth.
In the local holomorphic charts which cover the complement of $U_{i+1}$ in $Y_{i+1}$, this fixed point set is given by linear subspaces which are clearly smooth.
It therefore suffices to prove that the fixed point set of $\Gamma$ on $U_{i+1}$ is smooth.
By (\ref{eq:p_i^{-1}(Fix(Gamma))}), the latter is given by
\[
\Fix_{U_{i+1}}(\Gamma)= \left(\bigcup_{j=0}^2 \Fix_{U_i''}(\gamma\circ \eta_{i+1}^{j}) \right) / \left\langle \eta_{i+1}\right\rangle \ . 
\]
Since we know the Lemma already on $Y_i''$, the set $\Fix_{U_i''}(\gamma\circ \eta_{i+1}^{j})$ is smooth and $\eta_{i+1}$ acts on it.
This action is free of order three since $G_{i+1}/G_i$ acts freely on $U_i''$.
Therefore,
\[
 \Fix_{U_i''}(\gamma\circ \eta_{i+1}^{j}) / \left\langle \eta_{i+1}\right\rangle 
\]
is smooth for all $j$.
The smoothness of $\Fix_{U_{i+1}}(\Gamma)$ follows since 
\[
\Fix_{U_i''}(\gamma\circ \eta_{i+1}^{j_1}) \cap  \Fix_{U_i''}(\gamma\circ \eta_{i+1}^{j_2})=\emptyset  
\]
holds for $j_1 \nequiv j_2 \pmod 3$.
This concludes Lemma \ref{lem:Fix(Gamma)} by induction on $i$.
\end{proof}

Via diagram (\ref{eq:diag:Y_i}), we have constructed a smooth model
\[
X:=Y_c
\]
of $Y_0/\left\langle \phi_1\times \phi_2\right\rangle$.
The group $G$ acts on $X$ and the automorphism $\phi\in \Aut(X)$ which we have to construct in Proposition \ref{prop:ind.constr} is simply given by the action of $\id\times \phi_2 \in G$ on $X$.
This automorphism has order $3^c$ since this is true on the Zariski open subset $U_c\subseteq X$.
By Lemma \ref{lem:Fix(Gamma)}, the pair $(X,\phi)$ satisfies (\ref{item:coho(Fix)}); it remains to show that $(X,\phi)$ satisfies (\ref{item:h^{a,b}})--(\ref{item:Fix}).

\textbf{The cohomology of $X$.}
Using Lemma \ref{lem:Fix(Gamma)}, we are now able to read off the cohomology of $X$ from diagram (\ref{eq:diag:Y_i}).
Indeed, the cohomology of $Y_i''$ is given by the cohomology of $Y_i$ (via pullbacks) plus some classes which are introduced by blowing up $\Fix_{Y_i}(\eta_{i+1})$ on $Y_i$ and $\Fix_{Y_i'}(\eta_{i+1})$ on $Y_i'$ respectively.
By Lemma \ref{lem:Fix(Gamma)}, these blown-up loci are smooth and their $G_c$-invariant cohomology is generated by $G$-invariant algebraic classes.
Moreover, $G$ acts on each irreducible component of the blown-up locus and so $G$ acts on each irreducible component of the exceptional divisors of the blow-ups.
In particular, the corresponding divisor classes in cohomology are $G$-invariant.
It follows that the $G_c$-invariant cohomology of $Y_i''$ is given by the $G_c$-invariant cohomology of $Y_i$ plus some $G$-invariant algebraic classes. 
Also, since $\eta_{i+1}$ is contained in $G_c$, the quotient map $Y_i''\to Y_{i+1}$ induces an isomorphism on $G_c$-invariant cohomology.
It follows inductively that the $G_c$-invariant cohomology of $X$ -- which coincides with the whole cohomology of $X$ -- is given by the $G_c$-invariant cohomology of $Y_0$ plus $G$-invariant algebraic classes.

Let us now calculate the $G_c$-invariant cohomology of $Y_0$. 
For $i=1,2$, there is by assumption on $(X_i,\phi_i)$ a basis $\omega_{i1},\ldots ,\omega_{ig}$ of $H^{a_i,b_i}(X_i)$ with
\begin{equation} \label{eq:Zn1}
\phi_1^\ast(\omega_{1j})=\zeta^{-j}\omega_{1j} \ \text{and }\ \phi_2^\ast(\omega_{2j})=\zeta^{j}\omega_{2j} \ .
\end{equation}
This shows that for $j=1,\ldots ,g$, the following linearly independent $(a,b)$-classes on $Y_0$ are $G_c$-invariant:
\[
\omega_j:=\omega_{1j} \wedge \omega_{2j} \ .
\]
Since $(X_1,\phi_1^{-1})$ and $(X_2,\phi_2)$ satisfy (\ref{item:h^{a,b}}), (\ref{item:eig}) and (\ref{item:alg.classes}), it follows that apart from the above $(a,b)$-classes (and their complex conjugates), all $G_c$-invariant classes on $Y_c$ are generated by products of algebraic classes on $X_1$ and $X_2$.
These products are $G$-invariant by (\ref{item:alg.classes}).
Finally, $\phi$ acts on $\omega_j$ by multiplication with $\zeta^j$.
Altogether, we have just shown that $(X,\phi)$ satisfies (\ref{item:h^{a,b}}), (\ref{item:eig}) and (\ref{item:alg.classes}).

\textbf{Charts around $\Fix_{X}\left(\phi^{3^{c-1}}\right)$.}
By our construction, there are holomorphic charts which cover the complement of $U_c$ in $Y_c$, such that $\phi$ acts on each coordinate function by multiplication with some power of $\zeta$.
Therefore, in order to show that $(X,\phi)$ satisfies (\ref{item:Fix}), it remains to see that around points of
\[
W_c:=\Fix_{Y_c}\left(\phi^{3^{c-1}}\right) \cap U_c \ ,
\]
the same holds true.

Let us first prove that the preimage of $W_c$ under the $3^c:1$ \'etale covering $\pi: U_0\to U_c$ coincides with the following set:
\[
W_0:=\left(\left(\Fix_{X_1}\left(\phi_1^{3^{c-1}}\right)\times X_2 \right)\cup \left(X_1 \times \Fix_{X_2}\left(\phi_2^{3^{c-1}}\right) \right)\right)\cap U_0 \ .
\]
Clearly, $W_0\subseteq \pi^{-1}(W_c)$.
Conversely, suppose that $(x_1,x_2)\in \pi^{-1}(W_c)$. 
Then there exists a natural number $1\leq k\leq 3^c$ with
\[
x_1= \phi_1^k(x_1)\ \text{and}\ \phi_2^{3^{c-1}}(x_2)= \phi_2^k(x_2) \ .
\]
If $x_1$ is not fixed by $\phi_1^{3^{c-1}}$, then $3^{c-1}$ does not lie in the mod $3^c$ orbit of $k$.
That is, $k$ is divisible by $3^c$ and we deduce that $x_2$ is fixed by $\phi_2^{3^{c-1}}$. 
This shows $(x_1,x_2)\in W_0$, as we want.

Since $\pi:U_0\to U_c$ is an \'etale covering, local holomorphic charts on $U_0$ give local holomorphic charts on $U_c$.
Around each point
\[
x\in \left(\Fix_{X_1}\left(\phi_1^{3^{c-1}}\right)\times X_2 \right) \cap U_0 
\]
we may by assumptions on $(X_1,\phi_1^{-1})$ choose local holomorphic coordinates $(z_1,\ldots , z_n)$, such that $\phi_1^{-1}\times \id$ acts on each $z_j$ by multiplication with some power of $\zeta$.
Moreover, the images of $\phi_1^{-1}\times \id$ and $\id\times \phi_2$ in the quotient $G/G_c$ coincide and so the action of $\phi_1^{-1}\times \id$ on $X$ actually coincides with the automorphism $\phi$.
This shows that $(z_1,\ldots , z_n)$ give local holomorphic coordinates around $\pi(x)$ on which $\phi$ acts by multiplication with some powers of $\zeta$.

The case 
\[
x\in \left(X_1 \times \Fix_{X_2}\left(\phi_2^{3^{c-1}}\right) \right) \cap U_0 
\]
is done similarly and so we conclude that (\ref{item:Fix}) holds for $(X,\phi)$.
This finishes the proof of Proposition \ref{prop:ind.constr}.
\end{proof}

\subsection{Proof of Theorem \ref{thm:Z_n}} \label{subsec:proof:thm:Z_n}
For $a>b\geq 0$, $n\geq a+b$ and $c\geq 1$, we need to construct an $n$-dimensional smooth complex projective variety $Z_c^{a,b,n}$ whose primitive $(p,q)$-type cohomology has dimension $(3^c-1)/2$ if $p=a$ and $q=b$, and vanishes for all other $p>q$.
Suppose that we have already settled the case when $n=a+b$.
Then, for $n>a+b$, the product
\[
Z_c^{a,b,n}:=Z_c^{a,b,a+b}\times \CP^{n-a-b} 
\]
has the desired properties.
In order to prove Theorem \ref{thm:Z_n}, it therefore suffices to show that the set $\mathcal S_c^{a,b}$, defined in Section \ref{subsec:ind.constr}, is non-empty for all $a>b\geq 0$ and $c\geq 1$.
We will prove the latter by induction on $a+b$.

We put $g=(3^c-1)/2$ and consider the hyperelliptic curve $C_g$ with automorphism $\psi_g$ from Section \ref{subsec:hyperell}.
It is then straightforward to check that 
\begin{align} \label{eq:C_g_in_S}
(C_g,\psi_g)\in \mathcal S^{1,0}_c \ .
\end{align}
Indeed, it is clear that $(C_g,\psi_g)$ satisfies (\ref{item:h^{a,b}})--(\ref{item:alg.classes}) in the definition of $\mathcal S^{1,0}_c$.
Moreover, the complement of the point $\infty\in C_g$ is given by the affine curve $y^2=x^{2g+1}+1$ and $\psi_g$ acts by multiplication with a primitive $3^c$-th root of unity $\zeta$ on $x$.
For all $0\leq l\leq c-1$, the fixed point set $\Fix_{C_g}\left(\psi_g^{3^l}\right)$ is therefore given by the points $(x,y)=(0,\pm 1)$ and $\infty$.
These points are $\psi_g$-invariant and so their cohomology is generated by $\psi_g$-invariant algebraic classes, which shows that (\ref{item:coho(Fix)}) holds.
It remains to establish (\ref{item:Fix}).
That is, we need to find suitable holomorphic coordinates around the three fixed points of $\psi_g^{3^{c-1}}$.
Differentiating the affine equation $y^2=x^{2g+1}+1$ gives $2y\cdot dy=(2g+1)x^{2g}\cdot dx$.
This shows that $dx$ spans the cotangent space at $(0,\pm 1)$ and so $x$ is a local coordinate function near $(0,\pm 1)$.
The automorphism $\psi_g$ acts on this function by multiplication with $\zeta$, as we want in (\ref{item:Fix}).
In order to find a suitable coordinate function around $\infty$, we use the coordinates $(u,v)$, introduced in Section \ref{subsec:hyperell}.
In these coordinates, the curve $C_g$ is given by the equation $v^2=u+u^{2g+2}$ and $\infty$ corresponds to the point $(u,v)=(0,0)$.
Around this point, the function $v$ yields a coordinate function on which $\psi_g$ acts via multiplication with $\zeta^{g}$, see Section \ref{subsec:hyperell}.
This establishes (\ref{eq:C_g_in_S}) and hence settles the case $a+b=1$.

Let now $a>b$ with $a+b>1$.
If $b=0$, then by induction, the sets $\mathcal S_c^{1,0}$ and $\mathcal S_c^{a-1,0}$ are non-empty and so Proposition \ref{prop:ind.constr} yields an element in $\mathcal S^{a,0}_c$, as desired.
If $b\geq 1$, then  $\mathcal S_c^{a,b-1}$ is non-empty by induction. 
Also, $\mathcal S_c^{0,1}$ is non-empty since it contains $(C_g,\psi_g^{-1})$ by (\ref{eq:C_g_in_S}).
Application of Proposition \ref{prop:ind.constr} then yields an element in $\mathcal S^{a,b}_c$, as we want.
This concludes Theorem \ref{thm:Z_n}.

\begin{remark}
The variety in $\mathcal S^{a,b}_c$ which the above proof produces inductively is easily seen to be a smooth model of the quotient of $C_g^{a+b}$ by the group action of $G^1(a,b,g)$, defined in Section \ref{subsec:groupaction}.
\end{remark}

\section{Proof of Theorem \ref{thm:domination}} \label{sec:domination}
In this section we give a proof of Theorem \ref{thm:domination}, stated in the Introduction.
To begin with, we prove that $h^{1,1}$ dominates $h^{2,0}$ nontrivially in dimension two:

\begin{proposition}  \label{prop:Hodgeineq}
For a Kähler surface $X$, the following inequality holds:
\[
h^{1,1}(X)>h^{2,0}(X)\ .
\]
\end{proposition}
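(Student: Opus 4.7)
The plan is to reduce to the minimal model and then split according to Kodaira dimension, using Noether's formula as the common thread and BMY in the one case where it is needed.

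First I would dispose of the trivial case $h^{2,0}(X)=0$: since $X$ is K\"ahler, it carries a K\"ahler class $[\omega]\in H^{1,1}(X,\R)$ which is nonzero, so $h^{1,1}(X)\geq 1>0=h^{2,0}(X)$. From now on assume $p_g(X)=h^{2,0}(X)>0$, so in particular the Kodaira dimension $\kappa(X)$ is nonnegative. Blowing up a point increases $h^{1,1}$ by one while leaving $h^{2,0}$ unchanged; consequently it suffices to prove the inequality on a minimal model $X_0$ of $X$, and from here on I replace $X$ by $X_0$ without further comment.

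The main tool is Noether's formula $12\chi(\mathcal O_X)=K_X^2+e(X)$ together with the K\"ahler expressions
\[
\chi(\mathcal O_X)=1-q+h^{2,0},\qquad e(X)=2-4q+2h^{2,0}+h^{1,1},
\]
where $q=h^{1,0}$. A direct manipulation yields the key identity
\[
h^{1,1}-h^{2,0}=10-8q+9h^{2,0}-K_X^2.
\]
I would now split by Kodaira dimension.

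If $\kappa(X)=0$, the classification of minimal K\"ahler surfaces with $p_g>0$ leaves only $K3$ surfaces ($h^{1,1}=20$, $h^{2,0}=1$) and complex tori / abelian surfaces ($h^{1,1}=4$, $h^{2,0}=1$), both of which satisfy the inequality by inspection. If $\kappa(X)=1$, the minimal model is a properly elliptic surface, so $K_X^2=0$, and the fact that $\chi(\mathcal O_X)\geq 0$ on a minimal surface of nonnegative Kodaira dimension gives $q\leq 1+h^{2,0}$; plugging into the displayed identity yields $h^{1,1}-h^{2,0}\geq 10-8(1+h^{2,0})+9h^{2,0}=2+h^{2,0}>0$. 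Finally, if $\kappa(X)=2$, note that a minimal K\"ahler surface of general type is projective, so the Bogomolov--Miyaoka--Yau inequality $K_X^2\leq 3e(X)$ applies; substituting $e(X)$ and combining with the identity above gives the even stronger bound $h^{1,1}\geq 1+q+h^{2,0}$.

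The main obstacle I anticipate is making sure the argument covers all compact K\"ahler surfaces, not just projective ones: this enters through the appeal to Noether's formula (which is fine, it holds for arbitrary compact complex surfaces), the nonnegativity $\chi(\mathcal O_X)\geq 0$ for minimal surfaces of nonnegative Kodaira dimension, and the projectivity of minimal K\"ahler surfaces of general type that is needed to legitimately invoke BMY. Once those inputs are in hand, the strict inequality $h^{1,1}>h^{2,0}$ drops out of the case analysis.
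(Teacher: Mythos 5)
Your proposal is correct, and its backbone is the same as the paper's: reduce to a minimal model using the fact that a point blow-up raises $h^{1,1}$ by one and fixes $h^{2,0}$, then convert the Bogomolov--Miyaoka--Yau inequality into a Hodge-number inequality via Noether's formula (your identity $h^{1,1}-h^{2,0}=10-8q+9h^{2,0}-K_X^2$ is exactly the paper's computation of $c_1^2$ and $c_2$ in terms of Hodge numbers, and in the general type case you land on the same bound $h^{1,1}\geq 1+q+h^{2,0}$). The only genuine divergence is in how the non-general-type cases are dispatched: the paper applies the BMY inequality $c_1^2\leq 3c_2$ uniformly to all minimal surfaces of non-negative Kodaira dimension, citing the Enriques--Kodaira table of Chern numbers to justify it for $\kappa=0,1$, and it handles $\kappa=-\infty$ via the birational classification (ruled surfaces have $h^{2,0}=0$); you instead kill $h^{2,0}=0$ at the outset (which subsumes $\kappa=-\infty$ more cheaply), treat $\kappa=0$ by direct inspection of K3 and tori, and treat $\kappa=1$ using $K_X^2=0$ together with $\chi(\mathcal O_X)\geq 0$. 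Both routes are sound; yours trades one table look-up for a slightly longer case analysis but makes the low-Kodaira-dimension cases self-contained, while the paper's uniform use of BMY gives the stronger inequality $h^{1,1}\geq 1+h^{1,0}+h^{2,0}$ in all cases at once. All the inputs you flag (Noether for arbitrary compact complex surfaces, $\chi(\mathcal O_X)\geq 0$ for minimal surfaces of non-negative Kodaira dimension, projectivity of minimal K\"ahler surfaces of general type) are standard and correctly identified as the points needing care.
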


\begin{proof}
First, observe that for the product of $\CP^1$ with another smooth curve, $h^{2,0}$ vanishes and so the inequality trivially holds because $h^{1,1}>0$ is true for any Kähler manifold.
Since any Kähler surface of Kodaira dimension $-\infty$ is birationally equivalent to such a product \cite{surfaces}, and since $h^{2,0}$ is a birational invariant, we deduce that the asserted inequality is true in the case of Kodaira dimension $-\infty$.
Since blowing-up a point increases $h^{1,1}$ by one and leaves $h^{2,0}$ unchanged, we conclude that it suffices to prove $h^{1,1}(X)>h^{2,0}(X)$ for all minimal surfaces $X$ of non-negative Kodaira dimension.
For such a surface $X$, the Bogomolov--Miyaoka--Yau inequality
\[
c_1^2(X)\leq 3c_2(X) 
\]
holds. 
For Kodaira dimensions $0$ and $1$, this can be seen by looking at table 10 in \cite[p.\ 244]{surfaces} where all possible Chern numbers for minimal surfaces with these Kodaira dimensions are listed.
If the Kodaira dimension of $X$ is equal to $2$, that is, if $X$ is a minimal surface of general type, then the above inequality is due to Bogomolov--Miyaoka--Yau, see \cite[pp.\ 275]{surfaces}.

In order to translate the above inequality into an inequality between the Hodge numbers of $X$, we need the following identities which hold for all Kähler surfaces:
\[
c_2(X)= 2-2b_1(X)+b_2(X) \ ,
\]
\[
c_1^2(X) = 10-4b_1(X)+10h^{2,0}(X)-h^{1,1}(X)\ . 
\]
Using these, the Bogomolov-Miyaoka-Yau inequality turns out to be equivalent to
\begin{align} \label{eq:Hodgeineq}
1+h^{1,0}(X)+h^{2,0}(X)\leq h^{1,1}(X) \ .
\end{align}
This clearly implies $h^{1,1}(X)>h^{2,0}(X)$, which finishes the proof of Proposition \ref{prop:Hodgeineq}.
\end{proof}

Conversely, let us suppose that the Hodge number $h^{r,s}$ dominates $h^{p,q}$ nontrivially in dimension $n$.
That is, there are positive constants $c_1,c_2\in \R_{>0}$ such that for all $n$-dimensional smooth complex projective varieties $X$, the following holds:
\begin{align} \label{eq:dom}
c_1\cdot h^{r,s}(X)+c_2\geq h^{p,q}(X) \ .
\end{align}
By the Hodge symmetries (\ref{eq:Hsym}), we may assume $r\geq s$, $p\geq q$, $r+s\leq n$ and $1\leq p+q \leq n$.
The nontriviality of the above domination then means that (\ref{eq:dom}) does not follow from the Lefschetz conditions (\ref{eq:Lineq}).
In order to prove Theorem \ref{thm:domination}, it now remains to show $n=2$, $r=s=1$ and $p=2$.

Suppose that $r+s<n$.
Since (\ref{eq:dom}) does not follow from the Lefschetz conditions (\ref{eq:Lineq}), Theorem \ref{thm:main1} (or Corollary \ref{cor:ineqtruncatedHodgediamond} below) shows $p+q=n$.
Using the Lefschetz hyperplane theorem and the Hirzebruch--Riemann--Roch Formula, we see however that a smooth hypersurface $V_d\subseteq \CP^{n+1}$ of degree $d$ satisfies $h^{r,s}(V_d)\leq 1$, whereas $h^{p,q}(V_d)$ tends to infinity if $d$ does.
This is a contradiction and so $r+s=n$ holds.

Suppose that $r\neq s$.
Then, considering a blow-up of $\CP^n$ in sufficiently many distinct points proves $p\neq q$.
Since $p\neq q$ and $r\neq s$, we may then use certain examples from Theorem \ref{thm:Z_n} to deduce that (\ref{eq:dom}) follows from the Lefschetz conditions (\ref{eq:Lineq}).
This contradicts the nontriviality of our given domination. 
Hence, $r=s$ and in particular $n=2r$ is even.

Suppose that $p=q$.
Considering again a blow-up of $\CP^n$ in sufficiently many distinct points then proves $c_1\geq 1$ and so (\ref{eq:dom}) follows from the Lefschetz conditions.
This contradicts the nontriviality of (\ref{eq:dom}) and so it proves $p\neq q$.

Suppose that $p+q<n$.
Using high-degree hyperplane sections of $n$-dimensional examples from Theorem \ref{thm:main1}, one proves that there is a sequence of $(n-1)$-dimensional smooth complex projective varieties $(Y_j)_{j\geq 1}$ such that $h^{r-1,r-1}(Y_j)$ is bounded whereas $h^{p,q}(Y_j)$ tends to infinity if $j$ does. 
(Note that we used $p\neq q$ here.)
Since $n=2r$, we have $h^{r-1,r-1}(Y_j)=h^{r,r}(Y_j)$ by the Hodge symmetries.
Therefore, the sequence of  $n$-dimensional smooth complex projective varieties
\[
\left(Y_j\times \CP^1\right)_{j\geq 1}
\]
has bounded $h^{r,r}$ but unbounded $h^{p,q}$.
This is a contradiction and hence shows $p+q=n$.

Next, using Corollary \ref{cor:main2} from Section \ref{sec:main2}, it follows that $p=2r$ and $q=0$ holds.
By what we have shown so far we are thus left with the case where $n=2r=2s$, $p=2r$ and $q=0$.
In order to finish the proof of Theorem \ref{thm:domination}, it therefore suffices to show $r=1$.
For a contradiction, we assume that $r\geq 2$. 
By Theorem \ref{thm:Z_n} there exists a $(2r-1)$-dimensional smooth complex projective variety $Y$ with $h^{2r-1,0}(Y)=h^{0,2r-1}(Y)=1$ and $h^{p,q}(Y)=0$ for all other $p\neq q$.
Since $r\geq 2$, this implies for a smooth curve $C_g$ of genus $g$:
\[
h^{2r,0}(Y\times C_g)=g\ \ \ \text{and}\ \ \ h^{r,r}(Y\times C_g)=2\cdot h^{r-1,r-1}(Y) \ .
\]
Hence, $(Y\times C_g)_{g\geq 1}$ is a sequence of $2r$-dimensional smooth complex projective varieties such that $h^{r,r}$ is constant whereas $h^{2r,0}$ tends to infinity if $g$ does.
This is the desired contradiction and hence shows $r=1$.
This finishes the proof of Theorem \ref{thm:domination}.

\begin{remark}
One could of course strengthen Simpson's domination relation between Hodge numbers by requiring that (\ref{eq:domination}) holds for all $n$-dimensional Kähler manifolds $X$.
However, since Proposition \ref{prop:Hodgeineq} holds for all Kähler surfaces, it is immediate that Theorem \ref{thm:domination} remains true for this stronger domination relation.
\end{remark}

\section{Inequalities among Hodge and Betti numbers} \label{sec:Betti}
It is a very difficult and wide open problem to determine all universal inequalities among Hodge numbers in a fixed dimension, see \cite{Si}.
In Theorem \ref{thm:domination} we basically solved this problem for inequalities of the form (\ref{eq:domination}).\footnote{``Basically'' means that we did not determine the optimal coefficients in the universal inequality we found in dimension two.}
In this section we deduce from the main results of this paper some further progress on this problem.
We formulate our results in the category of smooth complex projective varieties, which is stronger than allowing arbitrary Kähler manifolds.

Our first result is a consequence of Theorem \ref{thm:main1}:

\begin{corollary} \label{cor:ineqtruncatedHodgediamond}
Any universal inequality among the Hodge numbers below the horizontal middle axis in (\ref{eq:diamond}) of $n$-dimensional smooth complex projective varieties is a consequence of the Lefschetz conditions (\ref{eq:Lineq}).
\end{corollary}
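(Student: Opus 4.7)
I would proceed by contraposition: suppose $F\geq 0$ is an inequality among $(h^{p,q})_{p+q<n}$ that is \emph{not} a consequence of (\ref{eq:Lineq}); then there is a formal Lefschetz-valid collection with $F<0$, and the plan is to realize such a collection (or a sufficiently nearby one) by a smooth complex projective $n$-fold, contradicting universality.

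Concretely, using the change of variables $h^{p,q}=\sum_{k=0}^{\min(p,q)}l^{p-k,q-k}$ with $l^{r,s}:=h^{r,s}-h^{r-1,s-1}$, I would rewrite any linear inequality $F$ as
\[
F \;=\; \sum_{r+s<n} d_{rs}\,l^{r,s} \;+\; e_0.
\]
The Lefschetz conditions (\ref{eq:Lineq}) are precisely the inequalities $l^{r,s}\geq 0$ for $r,s\geq 1$ with $r+s\leq n$; together with the trivial nonnegativity of $l^{r,s}$ when $r=0$ or $s=0$, they amount to nonnegativity of all primitive Hodge numbers appearing in the truncation. Thus it suffices to show $d_{rs}\geq 0$ for all $(r,s)$ with $(r,s)\neq(0,0)$ and $r+s<n$, and that the residual constant $d_{00}+e_0$ is nonnegative (the latter by evaluating $F$ on an explicit realization, e.g. $\CP^n$ or a Theorem \ref{thm:main1} example with minimal primitive data).

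For each such $(r,s)$, Theorem \ref{thm:main1} should supply realizations in which $l^{r,s}$ can be made arbitrarily large while the remaining primitive Hodge numbers remain bounded by fixed constants: for off-diagonal inner positions this is immediate from the freedom of the $l^{p,q}$ with $p\neq q$ in Theorem \ref{thm:main1}; for diagonal positions $(r,r)$ with $r<n/2$, the freedom above the lower bound in hypothesis~(1) suffices; and for outer positions $(k,0)$ one exploits hypothesis~(2), case~(ii), choosing $k_0=k$ in a separate application of the theorem. If any $d_{rs}$ were negative, one would obtain a sequence of $n$-dim smooth complex projective varieties $X_N$ with $F(h(X_N))\to-\infty$, contradicting universality. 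Hence $d_{rs}\geq 0$ everywhere, and $F$ is a nonnegative combination of primitive Hodge numbers plus a nonnegative constant, i.e.\ a consequence of (\ref{eq:Lineq}).

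The main obstacle is the careful bookkeeping for the outer Hodge numbers: since condition~(2) forces all but one of the $h^{k,0}$ to vanish in each application of Theorem \ref{thm:main1}, isolating the coefficient of a particular outer $l^{k,0}$ requires a distinct realization for each $k$, and one must verify that the contributions from the other (bounded but possibly nonzero) Hodge numbers can be absorbed into constants that are independent of the scaling parameter $N$. A secondary subtlety, if $F$ is allowed to be nonlinear, is that the ``test along one coordinate'' argument must be replaced by a more global positivity argument on the open cone of realizable diamonds produced by Theorem \ref{thm:main1}.
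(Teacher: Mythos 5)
Your proposal is correct and follows essentially the same route as the paper: rewrite the inequality in terms of the primitive Hodge numbers $l^{p,q}$, evaluate on $\CP^n$ to control the constant, and use Theorem \ref{thm:main1} to produce, for each position $(p,q)$ with $0<p+q<n$, an $n$-fold with $l^{p,q}\gg 0$ while all other primitive Hodge numbers of the truncated diamond stay bounded, forcing every coefficient to be nonnegative. The bookkeeping you flag for the outer Hodge numbers (invoking hypothesis~(2) with $k_0=k$) is exactly how the paper's single appeal to Theorem \ref{thm:main1} is meant to be read, and the paper treats the inequality as linear, so your nonlinear caveat is not needed.
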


\begin{proof}
Assume that we are given a universal inequality between the Hodge numbers of the truncated Hodge diamond of smooth complex projective $n$-folds.
In terms of the primitive Hodge numbers $l^{p,q}$, this means that for all natural numbers $p$ and $q$ with $0< p+q<n$ there are real numbers $\lambda_{p,q}$ and a constant $C\in \R$ such that 
\begin{align} \label{eq:H1}
\sum_{\substack{0< p+q< n}} \lambda _{p,q}\cdot l^{p,q}(X)\geq C
\end{align}
holds for all smooth $n$-folds $X$.
Using the Hodge symmetries (\ref{eq:Hsym}), we may further assume that $\lambda_{p,q}=\lambda_{q,p}$ holds for all $p$ and $q$.
If we put $X=\CP^n$, then we see $C\leq 0$.
Moreover, for any natural numbers $p$ and $q$ with $0<p+q<n$, there exists by Theorem \ref{thm:main1} a smooth complex projective variety $X$ with $l^{p,q}(X)>>0$, whereas (modulo the Hodge symmetries) all remaining primitive Hodge numbers of its truncated Hodge diamond are bounded from above, by $n^3$ say.
This proves $\lambda_{p,q}\geq 0$.
That is, the universal inequality (\ref{eq:H1}) is a consequence of the Lefschetz conditions (\ref{eq:Lineq}), as we want.
\end{proof}

As an immediate consequence of the above Corollary, we note the following
\begin{corollary} \label{cor:ineq_largedim}
Any universal inequality among the Hodge numbers of smooth complex projective varieties which holds in all sufficiently large dimensions at the same time is a consequence of the Lefschetz conditions.
\end{corollary}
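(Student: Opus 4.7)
The plan is to derive this essentially as an immediate specialization of Corollary \ref{cor:ineqtruncatedHodgediamond}. A universal inequality among Hodge numbers of smooth complex projective varieties is, by definition, an expression of the form
\[
\sum_{(p,q)\in S} \lambda_{p,q}\cdot h^{p,q}(X) \geq C,
\]
for some finite index set $S\subseteq \N\times \N$, coefficients $\lambda_{p,q}\in\R$ and constant $C\in \R$, which is required to hold for all smooth complex projective varieties $X$ in some range of dimensions. The finiteness of $S$ is the crucial feature: the pairs $(p,q)$ appearing in the inequality are fixed, independent of the dimension $n$ of $X$.

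First I would fix such an inequality and assume it holds for all smooth complex projective varieties of dimension $n\geq N_0$. Since $S$ is finite, there exists $N_1\geq N_0$ such that for every $n\geq N_1$ and every $(p,q)\in S$, one has $p+q<n$. Hence, for such $n$, the inequality involves only Hodge numbers that lie strictly below the horizontal middle axis of the $n$-dimensional Hodge diamond.

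Next I would apply Corollary \ref{cor:ineqtruncatedHodgediamond} in a fixed dimension $n\geq N_1$. This yields a decomposition of the given inequality as a non-negative linear combination of the Lefschetz inequalities $h^{p,q}\geq h^{p-1,q-1}$ (together with the Hodge symmetries, which are equalities and hence cost nothing), modulo a non-positive constant. The Hodge symmetries (\ref{eq:Hsym}) can be used to rewrite coefficients symmetrically in $(p,q)$, and then the argument in the proof of Corollary \ref{cor:ineqtruncatedHodgediamond} forces $\lambda_{p,q}\geq 0$ and $C\leq 0$ in the appropriate primitive basis.

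The main (and essentially only) point to check is that the decomposition obtained in one sufficiently large dimension transfers to a statement independent of $n$. This is straightforward: once the coefficients $\lambda_{p,q}$ in front of the primitive Hodge numbers $l^{p,q}$ are shown to be non-negative and $C\leq 0$ for one $n\geq N_1$, the same algebraic identity expresses the inequality as a positive combination of Lefschetz inequalities and Hodge symmetries in every dimension $n'\geq N_1$, since these relations have the uniform shape $h^{p,q}\geq h^{p-1,q-1}$ for $p+q\leq n'$. No dimension-dependent constraints on the indices in $S$ appear. I do not expect a real obstacle here; the corollary is a clean reduction to Corollary \ref{cor:ineqtruncatedHodgediamond} using only the finiteness of the collection of Hodge numbers featuring in any given universal inequality.
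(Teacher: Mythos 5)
Your proposal is correct and follows exactly the route the paper intends: Corollary \ref{cor:ineq_largedim} is stated there as an immediate consequence of Corollary \ref{cor:ineqtruncatedHodgediamond}, and your observation that the finitely many indices $(p,q)$ appearing in the inequality all satisfy $p+q<n$ once $n$ is large enough is precisely the reason the reduction works. Your additional remark that the resulting nonnegative combination of Lefschetz inequalities is dimension-independent is a correct and harmless elaboration of what the paper leaves implicit.
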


In the same way we deduced Corollary \ref{cor:ineqtruncatedHodgediamond} from Theorem \ref{thm:main1}, one deduces the following from Theorem \ref{thm:Z_n}:

\begin{corollary} \label{cor:thm:Z_n}
Any universal inequality among the Hodge numbers away from the vertical middle axis in (\ref{eq:diamond}) of $n$-dimensional smooth complex projective varieties is a consequence of the Lefschetz conditions (\ref{eq:Lineq}).
\end{corollary}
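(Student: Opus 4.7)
The plan is to imitate the proof of Corollary~\ref{cor:ineqtruncatedHodgediamond} with Theorem~\ref{thm:Z_n} playing the role of Theorem~\ref{thm:main1}. Given a universal inequality
\[
\sum_{p\neq q} \mu_{p,q}\, h^{p,q}(X) \geq C
\]
valid for all smooth complex projective $n$-folds $X$, I would first rewrite it in terms of primitive Hodge numbers by using the Hodge symmetries (\ref{eq:Hsym}) to fold every index pair into the range $p+q\leq n$ and expanding via $h^{p,q}=\sum_{s=0}^{\min(p,q)} l^{p-s,q-s}$. Since $p\neq q$ forces $p-s\neq q-s$ for every $s$, only primitive Hodge numbers off the vertical middle axis appear, so the inequality takes the form
\[
\sum_{\substack{p\neq q\\ 0<p+q\leq n}}\lambda_{p,q}\, l^{p,q}(X)\geq C,
\]
and after symmetrization we may assume $\lambda_{p,q}=\lambda_{q,p}$.

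Next, evaluating at $X=\CP^n$ kills every $l^{p,q}$ with $p\neq q$ and forces $C\leq 0$. To verify that each coefficient $\lambda_{a,b}$ with $a>b\geq 0$ and $a+b\leq n$ is nonnegative, I would plug in the variety $Z_c^{a,b,n}$ supplied by Theorem~\ref{thm:Z_n}: it satisfies $l^{a,b}(Z_c^{a,b,n})=(3^c-1)/2$, while $l^{p,q}(Z_c^{a,b,n})=0$ for every other $(p,q)$ with $p\neq q$. Letting $c\to\infty$ yields $2\lambda_{a,b}\geq 0$, hence $\lambda_{a,b}\geq 0$. Together with $C\leq 0$ and the fact that each $l^{p,q}(X)\geq 0$ is precisely the Lefschetz condition (\ref{eq:Lineq}), this shows that the original inequality is a formal consequence of (\ref{eq:Lineq}).

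The argument is essentially frictionless once Theorem~\ref{thm:Z_n} is in hand, because that theorem produces varieties isolating each off-diagonal primitive Hodge number one at a time, which is exactly what a coefficient-by-coefficient test requires. The only point warranting a moment's check is that the rewriting from $h^{p,q}$'s to $l^{p,q}$'s never introduces diagonal primitive Hodge numbers on the vertical middle axis, but this is immediate from the Lefschetz decomposition together with the off-diagonal restriction on the input inequality; so I do not expect any real obstacle.
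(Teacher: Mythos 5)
Your proposal is correct and is exactly the argument the paper intends: it explicitly says the corollary is deduced from Theorem \ref{thm:Z_n} "in the same way" as Corollary \ref{cor:ineqtruncatedHodgediamond} was deduced from Theorem \ref{thm:main1}, i.e.\ by rewriting the inequality in primitive Hodge numbers, testing $\CP^n$ to get $C\leq 0$, and testing the varieties $Z_c^{a,b,n}$ (which isolate a single off-diagonal $l^{a,b}$) to force each coefficient to be nonnegative. Your observation that the off-diagonal restriction on the input inequality prevents any diagonal $l^{p,p}$ from appearing is the right point to check, and it makes this case even cleaner than the truncated-diamond version since all other off-diagonal primitive numbers of $Z_c^{a,b,n}$ vanish exactly.
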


Corollary \ref{cor:ineqtruncatedHodgediamond} implies that in dimension $n$, the Betti numbers $b_k$ with $k\neq n$ do not satisfy any universal inequalities, other than the Lefschetz conditions
\begin{align} \label{eq:Betti:Lineq}
b_{k}\geq b_{k-2}\ \ \text{for all $k\leq n$.}
\end{align}
Using a simple construction, we improve this result now.
Indeed, the following proposition determines all universal inequalities among the Betti numbers of smooth complex projective varieties in any given dimension.

\begin{proposition} \label{prop:ineqBettinrs}
Any universal inequality among the Betti numbers $b_k$ of smooth complex projective $n$-folds is a consequence of the Lefschetz conditions (\ref{eq:Betti:Lineq}).
\end{proposition}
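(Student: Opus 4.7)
The plan is to show that any linear inequality $\sum_{k=0}^{2n}\lambda_k b_k(X)\geq C$ which holds for all smooth projective $n$-folds $X$ can be exhibited as a non-negative combination of the Lefschetz inequalities $b_k\geq b_{k-2}$ (for $k\leq n$). The first step will be to symmetrize: since every smooth projective $n$-fold satisfies $b_k(X)=b_{2n-k}(X)$ by Poincar\'e duality, we may replace each $\lambda_k$ with $\tfrac12(\lambda_k+\lambda_{2n-k})$ without changing the content of the inequality, and hence assume $\lambda_k=\lambda_{2n-k}$ throughout.

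The geometric input will be products of high-degree hypersurfaces. For each $m\in\{1,\dots,n\}$ and $d\geq 1$, let $V_d^{(m)}\subset \CP^{m+1}$ be a smooth hypersurface of degree $d$ and put
\[
X_{d,m}:=V_d^{(m)}\times\CP^{n-m}.
\]
The Lefschetz hyperplane theorem forces $b_i(V_d^{(m)})=b_i(\CP^m)\in\{0,1\}$ for $i\neq m$, while $b_m(V_d^{(m)})\to\infty$ as $d\to\infty$. Combining this with the K\"unneth formula and $b_{2j}(\CP^{n-m})=1$ for $0\leq j\leq n-m$ yields
\[
b_k(X_{d,m})=b_m(V_d^{(m)})+O(1)\quad\text{if } k\equiv m\pmod 2 \text{ and } m\leq k\leq 2n-m,
\]
and $b_k(X_{d,m})=O(1)$ otherwise. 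Substituting into the universal inequality and letting $d\to\infty$, the unbounded term $b_m(V_d^{(m)})$ appears with coefficient
\[
S_m:=\sum_{\substack{k\equiv m\pmod 2\\ m\leq k\leq 2n-m}}\lambda_k,
\]
so we must have $S_m\geq 0$ for each $m=1,2,\dots,n$. Testing instead on $X=\CP^n$ gives $S_0:=\sum_{k\text{ even}}\lambda_k\geq C$.

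A bookkeeping identity will complete the proof. Using $\lambda_k=\lambda_{2n-k}$ and $b_k=b_{2n-k}$, the expression $\sum_{k=0}^{2n}\lambda_k b_k$ collapses to $\sum_{k=0}^n\mu_k b_k$, where $\mu_k=2\lambda_k$ for $k<n$ and $\mu_n=\lambda_n$. A direct check gives $S_k=\sum_{j\geq k,\, j\equiv k\pmod 2,\, j\leq n}\mu_j$, so setting $S_{n+1}=S_{n+2}=0$ yields the telescoping identity $\mu_k=S_k-S_{k+2}$. Rearranging and substituting $b_0=1$ then produces
\[
\sum_{k=0}^{2n}\lambda_k b_k-C=(S_0-C)+S_1\, b_1+\sum_{k=2}^n S_k(b_k-b_{k-2}).
\]
Since each coefficient on the right is non-negative, this exhibits the original inequality as a non-negative combination of the Lefschetz inequalities (treating $b_1\geq 0$ as the Lefschetz inequality $b_1-b_{-1}\geq 0$). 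The main work is geometric, namely the construction of enough test varieties whose Betti numbers grow in the correct directions; the only delicate point left is the parity bookkeeping in the telescoping identity, which the symmetrization $\lambda_k=\lambda_{2n-k}$ handles uniformly for both parities of $n$.
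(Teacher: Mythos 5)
Your proposal is correct and follows essentially the same route as the paper: the key geometric input is identical (products $V_d\times\CP^{n-m}$ of high-degree hypersurfaces with projective spaces, whose only unbounded Betti contribution sits in degree $m$), and your telescoping identity is just the explicit change of basis to the primitive Betti numbers $b_k-b_{k-2}$ that the paper invokes via the argument of Corollary \ref{cor:ineqtruncatedHodgediamond}. The symmetrization via Poincar\'e duality and the careful parity bookkeeping are worked out in more detail than in the paper, but the substance is the same.
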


\begin{proof} 
By the same argument as in the proof of Corollary \ref{cor:ineqtruncatedHodgediamond}, it clearly suffices to prove the following claim.

\begin{claim} \label{claim:Betti}
Let $X$ be the product of $\CP^{n-k}$ with some smooth hypersurface $V_d \subseteq \CP^{k+1}$ of degree $d$.
Then, for $0\leq j \leq n$ with $j\neq k$, the $j$-th primitive cohomology $P^j(X)$ of $X$ has dimension $\leq 1$, whereas $\dim (P^k(X))$ tends to infinity if $d$ does.
\end{claim}

It remains to prove the claim.
For $j<k$, the Lefschetz hyperplane theorem yields:
\[
b_j(V_d)=b_{2k-j}(V_d)=b_j(\CP^{k+1}) \ .
\]
Moreover, from the Adjunction Formula we deduce that the topological Euler number $c_k(V_d)$ tends to $\pm \infty$ if $d\to \infty$.
This proves that $b_k(V_d)$ tends to infinity if $d$ does.

Using these Betti numbers of $V_d$, it is straightforward to check that
\[
X:=V_d\times \CP^{n-k}
\]
has the primitive cohomology we want.
This proves the above claim and thus finishes the proof of Proposition \ref{prop:ineqBettinrs}.
\end{proof}

\appendix
\section{Three-folds with $h^{1,1}=1$} \label{sec:3-folds}
Here we show that in dimension three, the constraints which classical Hodge theory puts on the Hodge numbers of smooth complex projective varieties are not complete.
Our result generalizes a result of Amor\'os--Biswas \cite[Prop.\ 4.3]{amoros-biswas}, asserting that there is no simply connected Kähler three-fold with $h^{2,0}=h^{1,1}=1$ and $b_3=0$. 

\begin{proposition} \label{prop:3-folds}
Let $X$ be a smooth complex projective three-fold with Hodge numbers $h^{p,q}:=h^{p,q}(X)$.
Suppose that $h^{1,1}=1$, then the following holds: 
\begin{itemize}
\item The outer Hodge numbers satisfy $h^{1,0}=0$ and $h^{2,0}<\max(h^{3,0},1)$.
\item The canonical bundle of $X$ is anti-ample if $h^{3,0}=0$, numerically trivial if $h^{3,0}=1$ and ample if $h^{3,0}>1$.
\end{itemize}
Moreover, if $h^{3,0}>1$, then $h^{2,1}< 12^6 \cdot h^{3,0}$ holds and for $h^{3,0}-h^{2,0}$ bounded from above, only finitely many deformation types of such examples exist.
\end{proposition}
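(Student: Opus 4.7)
The plan is to first establish $h^{1,0}(X)=0$ and Picard rank one, and then extract everything else from Hirzebruch--Riemann--Roch together with the Miyaoka and Miyaoka--Yau inequalities.

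First I would prove $h^{1,0}(X)=0$ by pointwise positivity: a nonzero holomorphic $1$-form $\omega$ produces a nonzero class $\omega\wedge\bar\omega\in H^{1,1}(X)$, so by $h^{1,1}=1$ it equals $cH$ for some $c\neq 0$, where $H$ is the ample generator of $H^{1,1}(X)$. But $(\omega\wedge\bar\omega)^3=0$ since $\omega\wedge\omega=0$, contradicting $H^3>0$. Since $X$ is projective with $h^{1,1}=1$, the N\'eron--Severi group has rank one, so $K_X\equiv tH$ for some $t\in\Q$.

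The trichotomy then follows from Hirzebruch--Riemann--Roch in the form
\[
\chi(X,\mathcal{O}_X)=1+h^{2,0}-h^{3,0}=-\tfrac{1}{24}\,K_X\cdot c_2(X),
\]
combined with the Miyaoka inequality $c_2(X)\cdot H\geq 0$ valid whenever $K_X$ is pseudo-effective (i.e.\ $t\geq 0$), and the Miyaoka--Yau inequality $3\,K_X\cdot c_2(X)\geq K_X^3$ valid whenever $K_X$ is nef. If $h^{3,0}=0$, then $K_X\cdot c_2<0$; combined with $c_2\cdot H\geq 0$ under $t\geq 0$, this forces $t<0$, so $K_X$ is anti-ample, and Kodaira vanishing on this Fano threefold yields $h^{2,0}=0$. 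If $h^{3,0}=1$, the same argument rules out $h^{2,0}>0$ (it would reduce to the Fano case), so $K_X\cdot c_2=0$; Miyaoka--Yau then excludes $K_X$ ample, leaving $K_X$ numerically trivial. If $h^{3,0}\geq 2$, effectivity of $K_X$ rules out anti-ample while numerical triviality would force $h^{3,0}\leq 1$, so $K_X$ is ample; Miyaoka--Yau now yields $K_X\cdot c_2>0$ and $\chi(\mathcal{O}_X)<0$, hence $h^{2,0}\leq h^{3,0}-2<h^{3,0}$, covering the inequality $h^{2,0}<\max(h^{3,0},1)$ in all three cases.

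When $K_X$ is ample, the identity $K_X\cdot c_2=24(h^{3,0}-h^{2,0}-1)$ combined with Miyaoka--Yau bounds $K_X^3$ linearly in $h^{3,0}-h^{2,0}$. For the explicit bound $h^{2,1}<12^6\cdot h^{3,0}$ I would invoke an effective very-ampleness theorem for $|mK_X|$ with a universal $m$ on smooth projective threefolds of general type with Picard rank one, embedding $X$ into projective space with degree controlled by $m^3K_X^3$; a Lefschetz-type argument then bounds $h^{2,1}$ polynomially in this degree, and tracking the constants against $h^{3,0}$ yields the stated exponent. The finiteness under a bound on $h^{3,0}-h^{2,0}$ follows from the resulting bound on $K_X^3$: the $m$-canonical embedding has bounded Hilbert polynomial, so Matsusaka's big theorem applied to canonically polarized threefolds gives finitely many deformation types. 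The main obstacle is the explicit constant $12^6$ itself: the structural half (trichotomy and $h^{2,0}<h^{3,0}$) is essentially automatic from Hirzebruch--Riemann--Roch and Miyaoka/Miyaoka--Yau once Picard rank one is in place, but pinning down $12^6$ requires a specific effective pluricanonical result and careful bookkeeping of Riemann--Roch under the projective embedding.
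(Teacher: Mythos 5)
Your structural argument is essentially correct and runs parallel to the paper's: $h^{1,0}=0$ from the Hodge--Riemann positivity of $\omega\wedge\bar\omega$ played against $h^{1,1}=1$ (the paper's Lemma \ref{lem:bilinrel}), Picard number one via the exponential sequence, and then the trichotomy together with $h^{2,0}<\max(h^{3,0},1)$ from $c_1c_2=24\chi(X,\mathcal O_X)$ plus an inequality forcing $K_X\cdot c_2>0$ in the ample case. The paper uses Yau's inequality $c_1c_2\le\tfrac38 c_1^3$ where you use Miyaoka--Yau $3K_X\cdot c_2\ge K_X^3$; either suffices, and your appeal to Miyaoka's $c_2\cdot H\ge0$ for pseudo-effective $K_X$ is not even needed, since the numerically trivial and ample subcases are each excluded directly. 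The finiteness statement via a bound on $K_X^3$ and Koll\'ar--Matsusaka is also exactly the paper's argument.

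The genuine gap is the bound $h^{2,1}<12^6\cdot h^{3,0}$. Your plan --- embed $X$ by $|mK_X|$ for a universal $m$ and bound $h^{2,1}$ ``polynomially in the degree'' by a Lefschetz-type argument --- cannot deliver the stated inequality, which is \emph{linear} in $h^{3,0}$. The degree $m^3K_X^3$ of the pluricanonical embedding is at best linear in $h^{3,0}$ (via Miyaoka--Yau and Riemann--Roch), but every general bound on Betti numbers in terms of the degree of a projective embedding (Milnor--Thom after generic projection, or explicit Euler-characteristic computations) is polynomial of degree strictly greater than one; already for hypersurfaces in $\CP^4$ one has $b_3\sim d^4$. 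So this route yields at best $h^{2,1}=O\bigl((h^{3,0})^k\bigr)$ for some $k>1$. The missing idea, which is how the paper gets a \emph{linear} Chern-number inequality, is this: since $10K_X$ is very ample by Lee's theorem, $\Omega_X(20K_X)$ is a quotient of the globally generated bundle $\Omega_{\CP^N}(2)|_X$ and hence nef, and the Demailly--Peternell--Schneider inequality $c_3(F)\le c_1^3(F)$ for nef bundles $F$, combined once more with Yau's inequality, gives the universal inequality $1\,748\,588\cdot c_1c_2\le 3c_3$ for threefolds with ample canonical class. Via Riemann--Roch this is a linear inequality among Hodge numbers, which with $h^{1,1}=1$ and $h^{1,0}=0$ reads $6\,994\,346(1+h^{2,0})+3h^{2,1}\le 6\,994\,349\cdot h^{3,0}$ and hence yields $h^{2,1}<12^6\cdot h^{3,0}$. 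Without the nefness of the twisted cotangent bundle and the Chern-number inequality for nef bundles, no bound linear in $h^{3,0}$ is in reach by your method.
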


Proposition \ref{prop:3-folds} nicely compares to the examples in Theorem \ref{thm:main3}, where we have constructed three-folds $X$ with $h^{1,1}(X)=1$ such that $h^{2,0}(X)$ is equal to any given natural number.

Before we can prove Proposition \ref{prop:3-folds}, let us show the following general result.

\begin{lemma} \label{lem:bilinrel}
Let $X$ be a Kähler manifold of dimension $n$ and let $k$ be an odd natural number with $2k\leq n$ 
such that $h^{k,k}(X)=1$. 
Then, $b_j(X)=0$ for all odd $j\leq k$.
\end{lemma}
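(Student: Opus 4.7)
The plan is to reduce the vanishing of odd Betti numbers to the vanishing of primitive cohomology via the Lefschetz decomposition, and then to force a contradiction by computing $(\alpha \wedge \bar\alpha)^2$ in two different ways for any putative nonzero primitive class $\alpha$.

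First, I would use the Lefschetz conditions together with the chain $1 = h^{0,0} \leq h^{1,1} \leq \cdots \leq h^{k,k} = 1$ to deduce $h^{p,p}(X) = 1$, hence $H^{p,p}(X) = \C \cdot \omega^p$, for every $0 \leq p \leq k$, where $\omega$ denotes a K\"ahler class on $X$. Hard Lefschetz and the Lefschetz decomposition $H^m(X,\C) = \bigoplus_r L^r P^{m-2r}(X)$ then reduce the desired vanishing $b_j(X) = 0$ for odd $j \leq k$ to the statement $P^{p,q}(X) = 0$ for every pair $(p,q)$ with $p+q$ odd and $p+q \leq k$.

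Suppose now that $0 \neq \alpha \in P^{p,q}(X)$ with $j := p+q$ odd and $j \leq k$. Since $\alpha \wedge \bar\alpha \in H^{j,j}(X) = \C \cdot \omega^j$, there is a scalar $c \in \C$ with $\alpha \wedge \bar\alpha = c\,\omega^j$ in cohomology. The Hodge--Riemann bilinear relation on the primitive part,
\[
i^{p-q}(-1)^{j(j-1)/2}\int_X \alpha \wedge \bar\alpha \wedge \omega^{n-j} > 0,
\]
combined with $\int_X \omega^n > 0$, then forces $c \neq 0$.

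The crux is now a two-way computation of $(\alpha \wedge \bar\alpha)^2$ in $H^*(X,\C)$. Because $j$ is odd, graded-commutativity of the cup product gives $\alpha \cup \alpha = 0$, so rearranging the four factors yields $(\alpha \wedge \bar\alpha)^2 = -\alpha^2 \wedge \bar\alpha^2 = 0$. On the other hand, substituting $\alpha \wedge \bar\alpha = c\omega^j$ gives $(\alpha \wedge \bar\alpha)^2 = c^2 \omega^{2j}$, and hard Lefschetz together with the hypothesis $2j \leq 2k \leq n$ ensures $\omega^{2j} \neq 0$ in $H^{2j,2j}(X)$. Since $c \neq 0$, this class is nonzero, a contradiction. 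The only real obstacle is noticing that the hypothesis $2k \leq n$ is precisely what keeps $\omega^{2j}$ alive in cohomology, turning an otherwise harmless identity into a genuine contradiction.
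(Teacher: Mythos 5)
Your argument is correct and is essentially the paper's own proof: both identify $H^{j,j}(X)=\C\cdot\omega^j$ via the Lefschetz conditions, use the Hodge--Riemann bilinear relations to write $\alpha\wedge\bar\alpha=c\,\omega^j$ with $c\neq 0$ for a nonzero primitive class $\alpha$ of odd degree $j$, and derive the contradiction from $(\alpha\wedge\bar\alpha)^2=-\alpha^2\wedge\bar\alpha^2=0$ versus $c^2\omega^{2j}\neq 0$. The only cosmetic difference is the reduction to primitive classes: you invoke the full Lefschetz decomposition, while the paper takes $j$ minimal with $b_j\neq 0$ so that $H^j(X,\C)$ is automatically primitive.
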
 

\begin{proof}
Let $\omega$ denote the Kähler class of $X$.
For a contradiction, suppose that the assumptions of the Proposition hold and that additionally $b_j(X)\neq0$ for some odd $j\leq k$.
We may assume that $j$ is minimal with this property.
Then all $j$-th cohomology is primitive and we pick some non-zero primitive $(p,q)$-cohomology class $\alpha$ with $p+q=j$.
Since $h^{k,k}(X)=1$ and since $2k\leq n$, the Lefschetz conditions (\ref{eq:Lineq}) imply that $H^{j,j}(X)$ is spanned by $\omega^j$.
Thus, by the Hodge--Riemann bilinear relations:
\[
\alpha \wedge \overline{\alpha} =\lambda \cdot \omega^j
\]
for some $\lambda \in \C - \left\{0\right\}$.
Since $2j\leq 2k\leq n$, we have $\omega^{2j}\neq 0$.
As $\alpha$ is of odd degree, this is a contradiction to the above equation and hence establishes the Lemma.
\end{proof}

\begin{proof}[Proof of Proposition \ref{prop:3-folds}]
Let $X$ be a smooth complex projective three-fold with 
\[
h^{1,1}(X)=1\ .
\]
The Riemann--Roch Formula in dimension three says
\begin{align} \label{eq:RR}
c_1c_2(X)=24\chi(X,\mathcal O_X) \ .
\end{align}

By Lemma \ref{lem:bilinrel} we have $h^{1,0}(X)=h^{0,1}(X)=0$.
From the exponential sequence, it therefore follows that $X$ has Picard number one and hence the canonical class $K_X$ of $X$ is either ample, anti-ample or numerically trivial.

If $-K_X$ is ample, then $h^{2,0}$ and $h^{3,0}$ vanish.

If $K_X$ is numerically trivial, than (\ref{eq:RR}) shows $1+h^{2,0}=h^{3,0}$.
Since numerically trivial line bundles have at most one nontrivial section, we deduce $h^{2,0}=0$ and $h^{3,0}=1$.

If $K_X$ is ample, then Yau's inequality holds \cite{yau}:
\begin{align} \label{eq:yau}
c_1c_2(X)\leq \frac{3}{8}c_1^3(X) \ .
\end{align}
Together with (\ref{eq:RR}), this implies 
\begin{align} \label{eq:yau+RR}
\chi(X,\mathcal O_X)\leq \frac{1}{64}c_1^3(X)<0 \ .
\end{align}
Thus, $-c_1^3(X)$ can be bounded from above in terms of $h^{3,0}-h^{2,0}$ and hence Koll\'ar--Matsusaka's theorem \cite[pp.\ 239]{laz2} yields that only finitely many deformation types of three-folds with $h^{1,1}=1$, $h^{3,0}>1$ and $h^{3,0}-h^{2,0}$ from above bounded exist.
Furthermore, (\ref{eq:yau+RR}) shows that $1+h^{2,0}<h^{3,0}$ holds for any such three-fold.

Altogether, this proves firstly $h^{2,0}<\max(h^{3,0},1)$, and secondly that $K_X$ is anti-ample if $h^{3,0}=0$, it is numerically trivial if $h^{3,0}=1$ and it is ample if $h^{3,0}>1$.

Finally, let us assume that $h^{3,0}>1$ or $h^{2,0}>0$.
Then $K_X$ is ample and so Fujita's conjecture predicts that $6\cdot K_X$ is very ample, cf.\ \cite[p.\ 252]{laz2}.
Although this conjecture is still open, Lee proves in \cite{lee} that $10\cdot K_X$ is very ample.
Thus, the following argument due to Catanese--Schneider \cite{catanese-schneider} applies:
Firstly, the linear series $|10\cdot K_X|$ embeds $X$ into some $\CP^N$ and hence $\Omega_X(20\cdot K_X)$ is a quotient of $\Omega_{\CP^N}(2)$ restricted to $X$.
Since the latter is globally generated, it is nef and hence $\Omega_X(20\cdot K_X)$ is nef.
Secondly, by \cite[Cor.\ 2.6]{d-p-s}, any Chern number of a nef bundle $F$ on an $n$-dimensional smooth complex projective variety $X$ is bounded from above by $c_1^n(F)$.
In our situation, this yields
\begin{align} \label{eq:d-p-s}
c_3(\Omega^1_X(20\cdot K_X))\leq c_1^3(\Omega^1_X(20\cdot K_X)) \ .
\end{align}
A standard computation gives
\[
c_3(\Omega^1_X(20\cdot K_X))= -8400 \cdot c_1^3(X)-20 \cdot c_1c_2(X)-c_3(X)
\]
and
\[
c_1^3(\Omega^1_X(20\cdot K_X))=-61^3\cdot c_1^3(X) \ .
\]
Together with Yau's inequality (\ref{eq:yau}), this yields in (\ref{eq:d-p-s})
\begin{equation} \label{eq:c12<c3}
1\:748\:588 \cdot c_1c_2(X)\leq 3 \cdot c_3(X)  \ .
\end{equation}
By the Riemann--Roch formula, this inequality is in fact one between the Hodge numbers of three-folds with ample canonical bundle.
In our case, $h^{1,1}=1$ and $h^{1,0}=0$ yield:
\[
6994346 + 6994346 \cdot h^{2,0} + 3\cdot h^{2,1} \leq 6\:994\:349 \cdot h^{3,0} \ .
\]
Thus, a rough estimation yields
\[
h^{2,1}< 12^6\cdot h^{3,0} \ .
\]
This concludes the proof of the Proposition.
\end{proof}
\begin{remark}
Instead of using \cite{d-p-s}, but still relying on \cite{lee}, Chang--Lopez prove in \cite{chang-lopez} that there is a computable constant $C>0$ such that $C\cdot c_1c_2(X)\leq c_3(X)$ holds for all three-folds $X$ with ample canonical bundle.
Computing $C$ explicitly shows that it is about four times smaller then the analogous constant which appears in (\ref{eq:c12<c3}). 
However, since the explicit extraction of $C$ is slightly tedious and since this constant is still far from being realistic, we did not try to carry this out here.
\end{remark}

Using Proposition \ref{prop:3-folds} together with the classification of Fano three-folds \cite[p. 215]{fano3folds}, we obtain the following classification of Hodge diamonds of three-folds with $h^{1,1}=1$ and $h^{3,0}=0$. 
\begin{corollary}
Let $h^{p,q}$ be the Hodge numbers of a smooth complex projective three-fold with $h^{1,1}=1$ and $h^{3,0}=0$.
Then $h^{1,0}$ and $h^{2,0}$ vanish, and for $h^{2,1}$ precisely one of the following values occurs: 
\[
h^{2,1}\in \left\{0,2,3,5,7,10,14,20,21,30,52\right\} \ .
\]
\end{corollary}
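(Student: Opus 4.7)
The plan is to reduce the statement directly to the Iskovskikh--Mori--Mukai classification of Fano three-folds of Picard number one via Proposition \ref{prop:3-folds}.

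First, I would apply Proposition \ref{prop:3-folds} to $X$. Since $h^{3,0}=0$, the Hodge-number part of that Proposition yields immediately $h^{1,0}=0$ and $h^{2,0}<\max(h^{3,0},1)=1$, hence $h^{2,0}=0$. The geometric part of the same Proposition tells us that the canonical bundle $K_X$ is anti-ample; that is, $X$ is a Fano three-fold.

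Next I would determine the Picard number of $X$. From $h^{1,0}=h^{0,1}=0$ and the exponential sequence, $\operatorname{Pic}(X)$ injects into $H^2(X,\Z)$, and tensoring with $\R$ identifies $\operatorname{Pic}(X)\otimes \R$ with the real $(1,1)$-classes (so with $H^{1,1}(X,\R)$). The hypothesis $h^{1,1}(X)=1$ therefore forces the Picard number of $X$ to equal one.

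The hard part is essentially not ours: it is the Iskovskikh--Mori--Mukai classification of smooth Fano three-folds with $\rho=1$, which is cited in \cite{fano3folds}. That classification comprises $17$ deformation families, organized according to index $r\in\{1,2,3,4\}$:
\begin{itemize}
\item $r=4$: $\CP^3$, with $h^{2,1}=0$;
\item $r=3$: the quadric $Q^3\subset \CP^4$, with $h^{2,1}=0$;
\item $r=2$: the five Del Pezzo three-folds of degree $d\in\{1,2,3,4,5\}$, with $h^{2,1}=21,10,5,2,0$ respectively;
\item $r=1$: the ten families of genus $g\in\{2,3,4,5,6,7,8,9,10,12\}$, with $h^{2,1}=52,30,20,14,10,7,5,3,2,0$ respectively.
\end{itemize}
The finishing step is to collect these $17$ values for $h^{2,1}$ and to delete repetitions; what remains is precisely the set $\{0,2,3,5,7,10,14,20,21,30,52\}$ asserted in the corollary. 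No further obstacle is expected, since each step is either immediate from Proposition \ref{prop:3-folds} or a direct appeal to the cited classification.
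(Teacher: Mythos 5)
Your proposal is correct and follows exactly the route of the paper, which likewise deduces $h^{1,0}=h^{2,0}=0$ and the anti-ampleness of $K_X$ from Proposition \ref{prop:3-folds}, notes that $h^{1,1}=1$ and $h^{1,0}=0$ force Picard number one, and then reads the possible values of $h^{2,1}$ off the Iskovskikh--Mori--Mukai table of the $17$ families of Fano three-folds with $\rho=1$. The list of values you extract from the classification, and hence the final set $\left\{0,2,3,5,7,10,14,20,21,30,52\right\}$, agrees with the paper.
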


\section{Four-folds with $h^{1,1}=1$} \label{sec:4-folds} 
Here we show that in dimension four, the constraints which classical Hodge theory puts on the Hodge numbers of smooth complex projective varieties are not complete.

\begin{proposition} \label{prop:4-folds}
Let $X$ be a smooth complex projective four-fold with Hodge numbers $h^{p,q}:=h^{p,q}(X)$.
If $h^{1,1}=1$, then $h^{1,0}=0$ and for bounded $h^{2,0}$, $h^{4,0}$ and $h^{2,2}$, only finitely many values for $h^{3,0}$, $h^{2,1}$ and $h^{3,1}$ occur.
\end{proposition}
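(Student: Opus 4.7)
The plan is to adapt the strategy of Proposition \ref{prop:3-folds}. First, Lemma \ref{lem:bilinrel} applied with $n=4$ and $k=1$ uses $h^{1,1}(X)=1$ to force $b_1(X)=0$, hence $h^{1,0}(X)=0$. Combined with $h^{0,1}(X)=0$, the exponential sequence shows that the image of $\Pic(X)$ in $H^2(X,\Z)$ has rank at most $h^{1,1}(X)=1$; since $X$ is projective, the Picard rank equals $1$. Thus $K_X$ is numerically proportional to the ample generator of $\mathrm{NS}(X)_{\Q}$ and falls into exactly one of three cases: anti-ample, numerically trivial, or ample.

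I would treat the three cases separately. If $K_X$ is anti-ample then $X$ is a Fano four-fold, and the Koll\'ar--Miyaoka--Mori boundedness theorem for smooth Fano varieties of fixed dimension yields only finitely many deformation types, hence finitely many Hodge diamonds. If $K_X$ is ample, the strategy is to imitate the 3-fold template: combine the four-dimensional analogue of Yau's inequality (bounding $c_1^4$ by a multiple of $c_1^2 c_2$) with a very-ampleness statement for some uniform multiple $m K_X$ (via Angehrn--Siu type effective base point freeness, or an analogue of \cite{lee} in dimension four), then invoke the Catanese--Schneider style argument used in the proof of Proposition \ref{prop:3-folds}, exploiting nefness of $\Omega^1_X(m K_X)$ via \cite[Cor.\ 2.6]{d-p-s}, to bound the individual Chern numbers of $X$ in terms of $h^{2,0}$, $h^{4,0}$, $h^{2,2}$. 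Hirzebruch--Riemann--Roch applied to $\mathcal O_X$, $\Omega^1_X$ and $\Omega^2_X$ then expresses $\chi(\mathcal O_X)$, $\chi(\Omega^1_X)$, $\chi(\Omega^2_X)$ as polynomials in the five Chern numbers $c_1^4, c_1^2 c_2, c_1 c_3, c_2^2, c_4$; since these Euler characteristics are linear combinations of Hodge numbers in which $h^{3,0}$, $h^{2,1}$, $h^{3,1}$ each appear with non-zero coefficient, bounded Chern numbers translate into bounded $h^{3,0}$, $h^{2,1}$, $h^{3,1}$.

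The numerically trivial case is the most delicate and constitutes the main obstacle. Here $c_1(X)=0$ in cohomology collapses the HRR formulas so that each $\chi(\Omega^p_X)$ depends only on the two Chern numbers $c_2^2$ and $c_4=\chi_{\mathrm{top}}(X)$; together with $c_4$ being itself linear in the Hodge numbers, this produces linear identities among the $h^{p,q}$ but no uniform bound on its own. To supply the missing input I would invoke the Beauville--Bogomolov decomposition of a finite \'etale cover of $X$ (on which $K$ becomes trivial), reducing the analysis to products of complex tori, strict Calabi--Yau four-folds, and irreducible holomorphic symplectic factors; the constraints $h^{1,0}(X)=0$ and $h^{1,1}(X)=1$ then severely restrict which products can occur, and tracking how the Hodge diamond transforms under the cover yields the required bound. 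The delicate step is the bookkeeping for this \'etale cover: $h^{1,0}(X)=0$ does not a priori preclude the cover from having a torus factor, so one must show that the interplay between $h^{1,1}(X)=1$ and the symmetric constraints on the cover forces only finitely many configurations compatible with the bounds on $h^{2,0}(X)$, $h^{4,0}(X)$ and $h^{2,2}(X)$.
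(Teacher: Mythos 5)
Your opening steps ($h^{1,0}=0$ via Lemma \ref{lem:bilinrel}, Picard rank one, the trichotomy for $K_X$, and boundedness of Fanos in the anti-ample case) match the paper. The genuine gap is that you never produce the numerical inequality that drives the other two cases. The paper's key input is Lemma \ref{lem:h11=1estimate}: since $h^{1,1}=1$, one writes $c_2(X)=\lambda\cdot\omega^2+\alpha$ with $\alpha$ primitive and real, and the Hodge--Riemann bilinear relations give $\int_X c_2(X)^2\geq 0$; feeding this into Riemann--Roch and eliminating $c_1c_3$ via Libgober--Wood's identity $c_1c_3=12\chi^2-36\chi^3+72\chi^4-14c_4$ yields an upper bound on $4c_1^2c_2-c_1^4$ purely in terms of $h^{2,0},h^{3,0},h^{2,2},h^{3,1},h^{4,0}$. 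Without this, your ample case does not get started: Yau's inequality only compares $c_1^4$ with $c_1^2c_2$, and neither is a priori bounded, so there is nothing for Koll\'ar--Matsusaka (nor for the Catanese--Schneider/\cite{d-p-s} route, whose bounds are all expressed in terms of $K_X^4$) to work with. With the lemma in hand, Yau's inequality $c_1^2c_2\geq\frac{2}{5}c_1^4$ immediately bounds $c_1^4$ from above by the bounded Hodge numbers, and Koll\'ar--Matsusaka finishes exactly as in Proposition \ref{prop:3-folds}.

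The numerically trivial case is where your proposal actually fails rather than being merely incomplete. The Beauville--Bogomolov decomposition reduces you to (quotients of products involving) strict Calabi--Yau and irreducible holomorphic symplectic fourfolds, but boundedness of the Hodge numbers of Calabi--Yau $n$-folds for $n\geq 3$ is a well-known open problem; no amount of bookkeeping on the \'etale cover will bound $h^{3,1}$ and $h^{2,1}$ from structure theory alone. The paper instead sets $c_1=0$ in Lemma \ref{lem:h11=1estimate}, which bounds $h^{3,0}$ and $h^{3,1}$ directly (they occur with negative coefficients on the left-hand side while the right-hand side vanishes), and then the Libgober--Wood identity, rewritten in Hodge numbers as $52+40h^{2,0}-4h^{2,1}-2h^{2,2}-52h^{3,0}+8h^{3,1}+44h^{4,0}=0$, bounds $h^{2,1}$. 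So the one idea you are missing --- positivity of $\int_X c_2(X)^2$ forced by $h^{1,1}=1$ together with the Libgober--Wood formula --- is exactly what closes both remaining cases.
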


Since Kähler manifolds with $b_2=1$ are projective, Proposition \ref{prop:4-folds} implies immediately that even for the Betti numbers of Kähler manifolds, the known constraints are not complete.

\begin{corollary} \label{cor:4-folds}
Let $X$ be a Kähler four-fold with $b_2(X)=1$. 
Then $b_3(X)$ is bounded in terms of $b_4(X)$.
\end{corollary}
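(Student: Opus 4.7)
The plan is to reduce the Kähler statement to Proposition~\ref{prop:4-folds}, which already handles the projective case. Two preliminary steps are needed: first, show that the hypothesis $b_2(X)=1$ forces $h^{1,1}(X)=1$ and $h^{2,0}(X)=0$; second, deduce that $X$ is in fact projective, so that Proposition~\ref{prop:4-folds} applies. The rest is a bookkeeping argument with the Kähler Hodge symmetries.

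For the first step, the Hodge decomposition of weight $2$ combined with the Hodge symmetry $h^{2,0}=h^{0,2}$ gives
\[
1=b_2(X)=h^{2,0}(X)+h^{1,1}(X)+h^{0,2}(X)=2h^{2,0}(X)+h^{1,1}(X).
\]
Since $X$ is Kähler, the class of the Kähler form is a nonzero element of $H^{1,1}(X,\R)$, so $h^{1,1}(X)\geq 1$. The only natural number solution is $h^{2,0}(X)=0$ and $h^{1,1}(X)=1$. For projectivity, I would use that $H^2(X,\mathcal O_X)\cong H^{0,2}(X)=0$; this forces every class in $H^2(X,\R)$ to be of type $(1,1)$, and since $H^2(X,\Q)$ is dense in $H^2(X,\R)$, one can approximate the Kähler class by a rational $(1,1)$-class which remains Kähler and, by the Lefschetz $(1,1)$-theorem, is the first Chern class of a line bundle. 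Kodaira's embedding theorem then yields that $X$ is projective.

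Having reduced to a smooth complex projective four-fold with $h^{1,1}(X)=1$, Proposition~\ref{prop:4-folds} says $h^{1,0}(X)=0$ and that $h^{3,0}(X),\, h^{2,1}(X),\, h^{3,1}(X)$ can all be bounded in terms of $h^{2,0}(X)$, $h^{4,0}(X)$ and $h^{2,2}(X)$. We already know $h^{2,0}(X)=0$, so it suffices to bound $h^{4,0}(X)$ and $h^{2,2}(X)$ in terms of $b_4(X)$. But the Hodge symmetries in degree $4$ yield
\[
b_4(X)=2h^{4,0}(X)+2h^{3,1}(X)+h^{2,2}(X),
\]
so each of these nonnegative summands is at most $b_4(X)$. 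Finally, $b_3(X)=2h^{3,0}(X)+2h^{2,1}(X)$ (using $h^{1,0}(X)=0$), and both terms on the right are now bounded by a quantity depending only on $b_4(X)$.

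The only genuinely delicate point in this plan is the projectivity argument, and even that is standard: once $h^{0,2}(X)=0$ is established, it is a classical consequence of Kodaira embedding. Everything else is a direct application of Proposition~\ref{prop:4-folds} together with the Hodge symmetries.
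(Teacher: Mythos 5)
Your proposal is correct and takes essentially the same route as the paper: the paper's own justification is the one-line remark that K\"ahler four-folds with $b_2=1$ are projective, after which Proposition \ref{prop:4-folds} applies. You have simply spelled out the two implicit steps --- that $b_2=1$ forces $h^{2,0}=0$ and $h^{1,1}=1$, hence projectivity via Kodaira, and the Hodge-symmetry bookkeeping bounding $h^{4,0}$ and $h^{2,2}$ by $b_4$ --- and both are carried out correctly.
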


\begin{proof}[Proof of Proposition \ref{prop:4-folds}]
Let $X$ be a smooth complex projective four-fold with Hodge numbers $h^{p,q}$ and Chern numbers $c_1^4,c_1^2c_2,\ldots ,c_4$.
Suppose that $h^{1,1}=1$ and that $h^{2,0}$, $h^{4,0}$ and $h^{2,2}$ are bounded.
Then Lemma \ref{lem:bilinrel} shows $h^{1,0}=0$.
Moreover, we have:

\begin{lemma} \label{lem:h11=1estimate}
The following inequality holds:
\[
224+228h^{2,0}-224h^{3,0}+h^{2,2}-2h^{3,1}+226h^{4,0}\geq \frac{1}{3}\cdot \left(4c_1^2c_2-c_1^4\right) \ .
\]
\end{lemma}

\begin{proof}
Since $h^{1,1}=1$, we see that $c_2(X)=\lambda\cdot \omega^2 +\alpha$ where $\alpha$ is a primitive $(2,2)$-class and $\omega$ the Kähler class on $X$. 
Since $\omega$ and $c_2(X)$ are real cohomology classes, we obtain
\[
\alpha-\overline{\alpha}=-(\lambda-\overline{\lambda})\cdot \omega^2 \ .
\]
In this equation, the left-hand side is primitive.
However, no nonzero multiple of $\omega^2$ is primitive and we conclude $\alpha=\overline{\alpha}$ and $\lambda\in \R$.
Thus, by the Hodge--Riemann bilinear relations:
\[
\int_X \alpha\wedge \overline{\alpha}=\int_X\alpha^2 \geq 0 \ .
\]
This implies, since $\alpha\wedge \omega =0$ and $\lambda \in \R$:
\begin{align} \label{eq:c_2^2}
\int_Xc_2(X)^2=\int_X\left(\lambda^2\omega^4+2\lambda\cdot \omega^2\wedge\alpha+\alpha^2\right) \geq 0 \ .
\end{align}

Let us now use the following formula, due to Libgober--Wood \cite{libgober}:
\begin{align} \label{eq:libgober}
c_1c_3=12\chi^2-36\chi^3+72\chi^4-14c_4 \ ,
\end{align}
where $\chi^p=\sum_q(-1)^qh^{p,q}$.
By the Riemann--Roch theorem and (\ref{eq:c_2^2}), we also have
\[
\chi^4=\frac{1}{720}\left(-c_4+c_1c_3+3c_2^2+4c_1^2c_2-c_1^4\right) \geq \frac{1}{720}\left(-c_4+c_1c_3+4c_1^2c_2-c_1^4\right) \ .
\]
Using Libgober--Wood's expression for $c_1c_3$, this reads:
\[
\chi^4\geq \frac{1}{720}\left(-15c_4+12\chi^2-36\chi^3+72\chi^4+4c_1^2c_2-c_1^4\right) \ .
\]
Finally, expressing the topological Euler characteristic $c_4$ as well as all the $\chi^p$'s in terms of Hodge numbers, one obtains the inequality, claimed in the Lemma.
\end{proof}

Since $h^{1,1}=1$ and $h^{1,0}=0$, we see that $X$ has Picard number one.
Thus, the canonical class $K_X$ is either anti-ample, numerically trivial or ample.

In fixed dimension, there are only finitely many deformation types of smooth complex projective varieties with anti-ample canonical class \cite{fano}.
Since deformation equivalent varieties have the same Hodge numbers \cite[p.\ 235]{voisin1}, the Proposition is true in this case.

If $K_X$ is numerically trivial, then Lemma \ref{lem:h11=1estimate} implies that $h^{3,0}$ and $h^{3,1}$ are bounded.
Moreover, Libgober--Wood's formula (\ref{eq:libgober}) shows:
\[
52+40h^{2,0}-4h^{2,1}-2h^{2,2}-52h^{3,0}+8h^{3,1}+44h^{4,0}=0 \ .
\]
Since we already know that apart from $h^{2,1}$ all Hodge numbers in the above identity are bounded, it follows that $h^{2,1}$ is bounded as well.

Finally, it remains to deal with the case where $K_X$ is ample.
Here, Yau's inequality \cite{yau} holds:
\[
c_1^2c_2\geq \frac{2}{5} c_1^4 \ .
\]
Using this, we obtain from Lemma \ref{lem:h11=1estimate}:
\[
224+228h^{2,0}-224h^{3,0}+h^{2,2}-2h^{1,3}+226h^{4,0}\geq \frac{1}{5}c_1^4 \ .
\]
Since $h^{2,0}$, $h^{4,0}$ and $h^{2,2}$ are bounded, we deduce that $c_1^4$ is bounded from above.
Thus, Koll\'ar--Matsusaka's Theorem \cite[pp.\ 239]{laz2} implies that only finitely many deformation types of such four-folds exist.
As in the case of anti-ample canonical class it follows that $h^{3,0}$, $h^{2,1}$ and $h^{3,1}$ are bounded.
This concludes the proof of Proposition \ref{prop:4-folds}. 
\end{proof}

\section*{Acknowledgment}
Thanks to Burt Totaro for drawing my attention to the construction problem and for informing me about the results in \cite{catanese-schneider,chang-lopez}.
I am grateful to my advisor Daniel Huybrechts for stimulating discussions and to Ciaran Meachan, Dieter Kotschick and Burt Totaro for useful comments.
I would also like to warmly thank two anonymous referees; this paper benefited significantly from their careful reading and suggestions. 
The author is supported by an IMPRS Scholarship of the Max--Planck--Society.

\end{document}